\documentclass[12pt, leqno]{amsart}
\usepackage[OT2,T1]{fontenc}
\DeclareSymbolFont{cyrletters}{OT2}{wncyr}{m}{n}
\DeclareMathSymbol{\Sha}{\mathalpha}{cyrletters}{"58}
\usepackage{indentfirst}
\usepackage{amstext}
\usepackage{amsopn}
\usepackage{amsfonts}
\usepackage{amsmath}
\usepackage{latexsym}
\usepackage{amscd}
\usepackage{amssymb}
\usepackage{amsmath}
\usepackage[all,cmtip]{xy}
\usepackage{leftidx}
\usepackage{graphicx}
\usepackage{tikz}
\usepackage{ulem}
\usepackage{hyperref}

\textwidth      =6in \oddsidemargin  =.25in \evensidemargin
=\oddsidemargin \font\teneufm=eufm10 \font\seveneufm=eufm7
\font\fiveeufm=eufm5
\newfam\eufmfam
\textfont\eufmfam=\teneufm \scriptfont\eufmfam=\seveneufm
\scriptscriptfont\eufmfam=\fiveeufm


\def\frak#1{{\fam\eufmfam\relax#1}}
\let\goth\mathfrak

\def\cB{\mathcal B}

\def\m{\mathfrak m}

\def\cP{\mathcal P}
\def\gP{\mathfrak{p}}
\def\cU{\mathcal U}

\def\cp{\frak p}

\def\GG{\mathbb{G}}

\def\gG{\goth G}

\def\gV{\goth V}

\def\gX{\goth X}

\def\gp{\goth p}

\def\1{\mbox{\bf 1}}

%

\DeclareMathOperator{\Hom}{Hom}
\DeclareMathOperator{\Aut}{Aut}

\DeclareMathOperator{\uAut}{\underline{Aut}}
\DeclareMathOperator{\uIsom}{\underline{Isom}}

\DeclareMathOperator{\Out}{Out} 
\DeclareMathOperator{\Isom}{Isom}

\DeclareMathOperator{\Ind}{Ind}
\DeclareMathOperator{\per}{per}

\DeclareMathOperator{\Spin}{\rm Spin}

\DeclareMathOperator{\SO}{\rm SO}
\DeclareMathOperator{\PGL}{\rm PGL}
\DeclareMathOperator{\GL}{\rm GL}
\DeclareMathOperator{\SL}{\rm SL}

\newcommand{\incl}[1][r]
{\ar@<-0.2pc>@{^(-}[#1] \ar@<+0.2pc>@{-}[#1]}









\newtheorem{stheorem}{Theorem}[section]

\newtheorem{scorollary}[stheorem]{Corollary}
\newtheorem{slemma}[stheorem]{Lemma}
\newtheorem{sproposition}[stheorem]{Proposition}
\newtheorem{sremark}[stheorem]{Remark}
\newtheorem{sremarks}[stheorem]{Remarks}

\newtheorem{ssetting}[stheorem]{Setting}



\theoremstyle{definition}

\numberwithin{equation}{section}


\def\ZZ{\mathbb{Z}}

\def\gE{\mathfrak{E}}

\def\gG{\mathfrak{G}}

\def\gP{\mathfrak{P}}
\def\gQ{\mathfrak{Q}}

\def\Par{\mathrm{Par}}

\def\ol{\overline}

\def\fppf{\text{\rm fppf}}

\def\2int{\mathop{2\int}\nolimits}

\def\Spec{\mathop{\rm Spec}\nolimits}

\def\Hom{\mathop{\rm Hom}\nolimits}

\def\Ind{\mathop{\rm Ind}\nolimits}

\def\Gal{\mathop{\rm Gal}\nolimits}

\def\Pic{\mathop{\rm Pic}\nolimits}
\def\Ind{\mathop{\rm Ind}\nolimits}

\def\Br{\mathop{\rm Br}\nolimits}
\def\Aut{\text{\rm{Aut}}}
\def\Out{\text{\rm{Out}}}
\def\sm{\smallskip}

\def\Par{\text{\rm{Par}}}

\def\Isom{\mathop{\rm Isom}\nolimits}

\def\Nrd{\mathop{\rm Nrd}\nolimits}

\def\resp.{\mathop{\rm resp.}\nolimits}

\def\limind{\mathop{\oalign{lim\cr
\hidewidth$\longrightarrow$\hidewidth\cr}}}
\def\Ker{\mathop{\rm Ker}\nolimits}

\def\Res{\mathop{\rm Res}\nolimits}
\def\Cor{\mathop{\rm Cor}\nolimits}

\def\lgr{\longrightarrow}

\font\math=cmmi10
\def\varpi{\hbox{\math\char'44}}

\def\simlgr{\buildrel\sim\over\lgr}

\def\pa{\S\kern.15em }

\def\un{\uppercase\expandafter{\romannumeral 1}}
\def\deux{\uppercase\expandafter{\romannumeral 2}}
\def\trois{\uppercase\expandafter{\romannumeral 3}}
\def\quatre{\uppercase\expandafter{\romannumeral 4}}
\def\cinq{\uppercase\expandafter{\romannumeral 5}}
\def\six{\uppercase\expandafter{\romannumeral 6}}

\def\hfl#1#2#3{\smash{\mathop{\hbox to#3{\rightarrowfill}}\limits
^{\scriptstyle#1}_{\scriptstyle#2}}}
\def\gfl#1#2#3{\smash{\mathop{\hbox to#3{\leftarrowfill}}\limits
^{\scriptstyle#1}_{\scriptstyle#2}}}

\title[Local-global principle]{Local-global principle for over semiglobal fields}

\author[P. Gille]{Philippe Gille}
\thanks{The first author was supported by the project "Group schemes, root systems, and related representations" founded by the European Union - NextGenerationEU through Romania's National Recovery and Resilience Plan (PNRR) call no. PNRR-III-C9-2023-
I8, Project CF159/31.07.2023, and coordinated by the Ministry of Research, Innovation and Digitalization (MCID)
of Romania. }
\address{UMR 5208 Institut Camille Jordan - Universit\'e Claude Bernard Lyon 1
43 boulevard du 11 novembre 1918
69622 Villeurbanne cedex - France}  
\address{and Institute of Mathematics "Simion Stoilow" of the Romanian Academy,
21 Calea Grivitei Street, 010702 Bucharest, Romania}
\email{gille@math.univ-lyon1.fr}

\author[R. Parimala]{Raman Parimala}\address{Departement  of Mathematics and Computer Science,
MSC W401, 400 Dowman Dr. Emory University
Atlanta, GA 30322
USA  
}
\email{parimala.raman@emory.edu}


\date{\today}

\begin{document}

 \begin{abstract}  We compare different  local-global principles for torsors 
 under a reductive group $G$ defined over a semiglobal field $F$.
 In particular if the $F$--group $G$ is a retract rational $F$--variety,
 we prove that the local global principle holds for the completions
 with respect to divisorial valuations of $F$.
  
\smallskip

\noindent {\em Keywords:} Local-global principle,
curves over local fields, torsors, reductive groups.  \\

\noindent {\em MSC 2000:} 11G99, 14G99, 14G05, 11E72, 11E12, 20G35
\end{abstract}

\maketitle


\bigskip

\section{Introduction}\label{section_intro}

The theme is the arithmetic theory of torsors
over a semiglobal field. It was initiated
by Hartmann, Harbater and Krashen \cite{HHK}
and developed further \cite{CTOHHKPS2, CTPS, HHK2,PS, RS}.

Let $B$ be a  complete discrete valuation  ring with fraction field $K$
and residue field $k$. Let $p \geq 1$
be the characteristic exponent of $k$. Let $X$ be a smooth, projective, geometrically 
integral curve over $K$. Let $F=K(X)$ be the function field of $X$ and let $t$ be an uniformizing parameter of $B$.
Let $\gX$ be a normal model of $F$, i.e.\ a normal connected
projective $B$-curve with function field $F$.
 We denote by $Y$ the closed fiber of $\gX$ and fix a separable closure $F_s$ of $F$.

Given an affine algebraic $F$--group $G$, we can 
study the  local-global principle for $G$-torsors
with respect 
to the infinite set of overfields $F_P$ that arise from completions at the points $P$ of $Y$. This gives rise to the Tate-Shafarevich set
$$
\Sha_{\gX}(F,G)=
\ker \Bigl( H^1(F,G) \to \prod_{P \in Y} 
H^1(F_P,G)\Bigr);
$$
it is denoted simply by $\Sha_{X}(F,G)$ in \cite{HHK2}.
Using Lipman's resolution of singularities permits to
consider the inductive limit
$$
\Sha_{patch}(F,G) = \limind_{\gX} \Sha_\gX(F,G)
$$
taken on the models (or equivalently on the regular models). 
On the other hand, we 
consider the completions associated to the set $\Omega_{F, div}$
of divisorial valuations on $F$ arising of various
regular projective models over $B$ of the curve  $X$.
It gives rise to the 
$$
\Sha_{div}(F,G)=
\ker \Bigl( H^1(F,G) \to \prod_{v \in \Omega_{F, div}} 
H^1(F_v,G)\Bigr);
$$
In view of \cite[prop.\ 8.2]{HHK2}, we have the following inclusion 
$$
\Sha_{patch}(F,G) \subseteq \Sha_{div}(F,G).
$$
\sm
The main result of the paper is to show (under some condition on $p$ with respect to  $G$),
that the  two  Tate-Shafarevich sets coincide (Thm.\ \ref{thm_main}).
If furthermore $G$ is an $F$-rational variety (or even
retract rational $F$--variety), 
we have  that $\Sha_{patch}(F,G)=1$ 
according to
Harbater-Krashen-Hartmann \cite[Thm.\ 9.1]{HHK2} (see
\cite[Thm.\ 2.2.4]{K} for the extension to the retract 
rational case)
so  that $\Sha_{div}(F,G)=1$ in this case.
It means that if $G$ is a retract rational $F$--variety
then the local-global principle principle holds for $G$-torsors with respect to divisorial valuations on $F$. 
A further related  question is whether  
 the set $\Sha_{div}(F,G)$ coincide with the set  
 $\Sha_{patch}(F,G)$  
with respect to a single integral model and a single patch on the model.  
 A first result in that direction is the case of inner type $A_{n-1}$ ($n$ prime to $p$)
 obtained recently by Suresh \cite[Thm.\ 1.4]{Su}.
 
We discuss also a $0$-cycle version of this result, see Theorem \ref{thm_main_cycle}.
This was known for certain classical groups
\cite{RS,PS, Su}. 
As in \cite{GP}, the proof uses  the study
of  loop torsors which requires
group schemes defined over $B$. To do so, for a given Chevalley group
$G_0$ over $\ZZ$, 
we explain in the first section why the $G_0\rtimes \Aut(G_0)$-torsors
permit to classify couples $(E,G)$ where $G$ is a form
of $G_0$ and $E$ a $G$-torsor.

\sm

\medskip

\noindent{\bf Acknowledgements.} We thank Jean-Louis Colliot-Th\'el\`ene for valuable comments
on a preliminary version.  

 \bigskip  
 
 \smallskip


\section{Pairs of a group scheme together with  a torsor}

The following is a complement to the ``sorites" of \cite{G3}. We work in the setting of fppf sheaves on a base scheme $S$.
We consider the groupoid $\gP$ whose objects are pairs $(E,G)$
where $G$ is a fppf $S$--sheaf in groups and $E$ a $G$--torsor; the morphisms between two pairs  $(E_1,G_1)$ and  $(E_2,G_2)$
are  pairs of isomorphisms $f=(f', f'')$ where $f'': G_1 \simlgr G_2$ is an $S$--isomorphism of $S$-sheaves in groups and ${f': E_1 \simlgr E_2}$ an $S$-isomorphism
of sheaves  such that the following diagram commutes

\[\xymatrix{
 E_1 \times_S G_1 \ar[d]^{f' \times f''} \ar[r] & E_1 \ar[d]^{f'}    \\ 
 E_2 \times_S G_2 \ar[r] & E_2 .
}\]
Note that it induces an isomorphism  $f_*: \,  \uAut_{G_1}(E_1) \simlgr \, \uAut_{G_2}(E_2)$ or
in other words  an isomorphism $f_*: \, {^{E_1}G_1}\simlgr \, {^{E_2}G_2}$
between the associated twisted group sheaves.

\begin{slemma}\label{lem_pair} Let $G$ be an $S$--sheaf in groups and let 
$E$ be a $G$--torsor over $S$. We denote by $^EG$
the twisted $S$--sheaf of $G$ by $E$ by inner automorphisms.

\smallskip

\noindent (1) We have a natural exact sequence of 
 fppf $S$--sheaves in groups
$$
1 \to {^EG} \to \uAut_\gP\bigl(E,G\bigr)
\to \uAut(G) \to 1
$$ 
which is locally split for the flat topology.

\smallskip

\noindent (2) If $E=G$ the sequence above splits and the 
 $S$--functor $\uAut_\gP\bigl( G,G\bigr)$ is isomorphic  to $G \rtimes \uAut(G)$.
\end{slemma}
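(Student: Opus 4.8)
The plan is to analyse the forgetful morphism $(f',f'')\mapsto f''$: left-exactness of the sequence in (1) is formal, the splitting assertion in (1) will be deduced from (2) by fppf descent, and (2) itself is an explicit computation. So I would first introduce the morphism of fppf $S$--sheaves in groups
$$
\pi\colon \uAut_\gP(E,G)\lgr \uAut(G),\qquad (f',f'')\longmapsto f'',
$$
which is a homomorphism because composition of morphisms of $\gP$ is componentwise on the second coordinate. Its kernel consists of the pairs $(f',\id_G)$, and the commuting square defining the morphisms of $\gP$ says exactly that such an $f'\colon E\to E$ is $G$--equivariant; hence $\ker\pi=\uAut_G(E)$, which is the twisted sheaf ${}^EG$ as recalled before the statement. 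This gives the injectivity of ${}^EG\to\uAut_\gP(E,G)$ and exactness at the middle term, so for (1) it remains only to see that $\pi$ is an fppf epimorphism, and indeed locally split.

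To prove (2) I would take $E=G$ with $G$ acting by right translation and argue on functorial points. A morphism $(f',f'')$ of $(G,G)$ is the datum of $f''\in\uAut(G)$ together with $f'\colon G\to G$ satisfying $f'(xg)=f'(x)\,f''(g)$; evaluating at the unit section gives $f'(x)=a\cdot f''(x)$ with $a:=f'(1)$, and conversely every pair $(a,f'')\in G\times\uAut(G)$ defines such an $f'$, invertible with inverse $y\mapsto (f'')^{-1}(a^{-1}y)$. Thus $(f',f'')\mapsto(f'(1),f'')$ identifies $\uAut_\gP(G,G)$ with $G\times\uAut(G)$ as $S$--sheaves; computing the composition of $(a_1,f''_1)$ followed by $(a_2,f''_2)$ yields $\bigl(a_2\,f''_2(a_1),\,f''_2 f''_1\bigr)$, which is precisely the multiplication of $G\rtimes\uAut(G)$. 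Under this identification $\pi$ becomes the projection $G\rtimes\uAut(G)\to\uAut(G)$, split by $f''\mapsto(1,f'')$, and $\ker\pi=\{(a,\id_G)\}\cong G$ is ${}^GG$, the inner twist by the trivial torsor. This proves (2).

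Finally I would deduce the remaining part of (1). Since $E$ is an fppf-locally trivial $G$--torsor, choose a flat cover $S'\to S$ together with an isomorphism $E\times_S S'\simeq G_{S'}$ of $G_{S'}$--torsors. As the groupoid $\gP$ is stable under isomorphism, this induces an isomorphism $\uAut_\gP(E,G)\times_S S'\simeq\uAut_\gP(G_{S'},G_{S'})$ compatible with $\pi$, so by (2) the pulled-back sequence splits over $S'$; hence $\pi$ is an fppf epimorphism and the sequence of (1) is locally split for the flat topology. The one step requiring care is keeping the arguments ``evaluate at the unit section'' and ``compute the composition law'' within the category of fppf $S$--sheaves, i.e.\ reading them on $T$--points for all $S$--schemes $T$; but this is routine, since everything is expressed through the group law of $G$.
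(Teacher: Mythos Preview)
Your proof is correct and follows essentially the same route as the paper: both identify the kernel of the projection $\pi$ as $\uAut_G(E)={}^EG$, exhibit the explicit splitting $f''\mapsto(f'',f'')$ in the case $E=G$ to obtain $G\rtimes\uAut(G)$, and deduce the epimorphism/local splitting in general from local triviality of the torsor. The only cosmetic difference is that the paper cites Giraud \cite[cor.~2.3.8]{Gd} for exactness at the middle term, whereas you unwind it directly from the commuting square, and you spell out the semidirect-product law explicitly where the paper leaves it implicit.
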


\begin{proof}
We prove the two assertions at the same time.
The   projection $\uAut_\gP\bigl(E,G\bigr)
\to \uAut(G)$ is a homomorphism of $S$--sheaves in groups.
On the other hand we have a monomorphism 
${^EG} = \uAut_G(E) \to \uAut_\gP\bigl( E,G\bigr)$, 
$f'' \mapsto (f'', id)$. The fact that the sequence is exact at the middle term is 
the fact \cite[cor. 2.3.8]{Gd}.

We consider now the case $E=G$. In this case the sequence above splits
by $f'' \mapsto (f'',f'')$ hence an isomorphism 
$G \rtimes \uAut(G) \simlgr \uAut_\gP\bigl(G,G \bigr) $.
 Coming back to the general case it 
implies that the projection $\uAut_\gP\bigl( E,G \bigr)
\to \uAut(G)$ is an epimorphism of $S$--sheaves.
\end{proof}

Given  a fppf $S$--sheaf in groups $G$, we can apply \cite[lemme 2.6.3]{G3}
to express the cohomology

\begin{equation} \label{eq_coh}
H^1(S, G \rtimes \uAut(G)) \simlgr \bigsqcup_{ [G']  \in H^1(S,\uAut(G))}
H^1(S,G')/\uAut(G')(S).
\end{equation}

\noindent by using the fact that $H^1(S, \uAut(G))$ classifies 
the $S$--forms of $G$.
The action on $\uAut(G')(S)$ on $H^1(S,G')$ is the natural one
so that the map $H^1(S,G') \to H^1(S,G')/\uAut(G')(S)$ has trivial kernel 
so that the composite
$H^1(S,G') \to H^1(S, G \rtimes \uAut(G))$
has trivial kernel. This map is nothing but the composition
$$
H^1(S,G') \to H^1(S,G' \rtimes \uAut(G'))
 \xrightarrow[\sim]{ \hbox{Torsion bijection}}
H^1(S, G \rtimes \uAut(G))
$$
 where the torsion bijection is with respect to the $\uAut(G)$--torsor
 $\uIsom(G,G')$.
Summarizing the decomposition \eqref{eq_coh}, 
$H^1(S, G \rtimes \uAut(G))$ encodes the classes of torsors
for all twisted forms of $G$.
The map $\mathrm{int}: G \to \uAut(G)$ extends
to 

\[\xymatrix{
u:  G \rtimes \uAut(G)  & \ar[r] &   \uAut(G) \rtimes^{c} \uAut(G)  
 & \ar[r]^{\sim \qquad \quad} & \uAut(G) \times \uAut(G) ,\\ 
 (g,f) & \mapsto & (\mathrm{int}(g), f) \\
 &&  (f_1, f_2)& \mapsto&  (f_1 f_2, f_2)
}\]
 where $c$ stands for the conjugacy action.
 In terms of objects classified by the respective torsors, 
 $u_*$  applies a pair $(E',G')$ to  $(^{E'}G', G')$.

\section{Refinements on the reduction of torsors to finite subgroups}

We provide two  variants of \cite[Theorem 1.2.(2)]{CGR}  for reduction of  torsors
from certain group schemes to finite subgroup schemes.
Let $1 \to G \to \widetilde G \to J \to 1$
be an exact of $\ZZ$--group schemes where 
$G$ is a Chevalley reductive $\ZZ$--group scheme and
$J$ a constant $\ZZ$--group scheme (not necessarily finite).
Let $T \subset G$ be a maximal $\ZZ$-split subtorus
and put $N=N_G(T)$, $W=N_G(T)/T$.

\begin{slemma} \label{lem_rep} (1) The fppf sheaf normalizer
$\widetilde N=N_{\widetilde G}(T)$ is representable
by a smooth $\ZZ$--group scheme which fits in an exact 
sequence
$$
1 \to N \to \widetilde N \to J \to 1.
$$

\noindent (2) The fppf quotient $\widetilde W=\widetilde N/T$ is representable by a constant $\ZZ$-group scheme. Furthermore the map $\widetilde N(\ZZ) \to 
\widetilde W(\ZZ)$ is onto.

\end{slemma}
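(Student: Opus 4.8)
The plan is to construct $\widetilde N$ and $\widetilde W$ first as fppf sheaves, prove the exactness statements, and only then deduce representability by splitting everything, along the constant sheaf $J$, into disjoint unions of torsors under the smooth affine $\ZZ$-groups $N$, $T$ and $W$; the arithmetic inputs that make this work over $\ZZ$ are $\Pic(\ZZ)=0$ and the fact that $\Spec\ZZ$ has no nontrivial connected finite \'etale cover (Minkowski). For the exact sequence $1\to N\to\widetilde N\to J\to1$: since $G\trianglelefteq\widetilde G$, every section of $\widetilde G$ acts on $G$ by a group automorphism, so $\widetilde N\cap G=N_G(T)=N$, which gives exactness at $N$ and at $\widetilde N$. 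To see that $\widetilde N\to J$ is an epimorphism of fppf sheaves I would work fppf-locally: a section of the constant sheaf $J$ lifts, over some fppf cover $S'$, to $\tilde g\in\widetilde G(S')$; then $T':=\tilde g^{-1}T\tilde g$ is the image of $T$ under the automorphism $\mathrm{int}(\tilde g^{-1})$ of $G_{S'}$, hence a maximal torus of $G_{S'}$, and by the conjugacy of maximal tori of a reductive group scheme, which holds locally for the \'etale topology, there are a further \'etale cover $S''\to S'$ and $h\in G(S'')$ with $hT'h^{-1}=T$; then $m:=\tilde gh^{-1}\in\widetilde N(S'')$ maps to the prescribed section of $J$.

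For (1) it remains to prove representability and smoothness. Since $J=\coprod_\gamma\Spec\ZZ$ is constant, the exact sequence exhibits $\widetilde N$ as the disjoint union $\coprod_\gamma\widetilde N_\gamma$, where $\widetilde N_\gamma$ is the fibre of $\widetilde N\to J$ over $\gamma$; by the surjectivity just proved, this fibre is an fppf torsor under $N$ (for the left translation action of $N\subset\widetilde N$). Now $N=N_G(T)$ is a smooth affine $\ZZ$-group scheme, fitting in $1\to T\to N\to W\to1$ with $T$ a split torus and $W$ the finite constant Weyl group of $G$ (standard for Chevalley groups). An fppf torsor under a smooth affine group scheme is representable by a smooth affine scheme, so $\widetilde N$ is representable by a smooth $\ZZ$-group scheme, which proves (1).

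For (2): as $\widetilde N$ normalizes $T$, one has $T\trianglelefteq\widetilde N$, and passing to fppf quotients yields short exact sequences $1\to W\to\widetilde W\to J\to1$ and $1\to T\to\widetilde N\to\widetilde W\to1$. From the first, the fibre of $\widetilde W\to J$ over any $\gamma\in J(\ZZ)$ is an fppf torsor under the finite constant group $W$; every such torsor over $\Spec\ZZ$ is trivial, because $\Spec\ZZ$ has no nontrivial connected finite \'etale cover, so $\widetilde W\cong\coprod_\gamma W$ is a constant $\ZZ$-group scheme. For the last assertion, use the second sequence: the fibre of $\widetilde N\to\widetilde W$ over a given $\bar n\in\widetilde W(\ZZ)$ is, via left translation by $T$, an fppf torsor under $T$, and $H^1_{\fppf}(\ZZ,T)=\Pic(\ZZ)^{\rank T}=0$; so this torsor is trivial, has a $\ZZ$-point, and that point lifts $\bar n$ to $\widetilde N(\ZZ)$. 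The only step that is not formal bookkeeping is the fppf-surjectivity of $\widetilde N\to J$: the point is that conjugation by a local lift of a section of $J$ carries $T$ to a maximal torus of $G$ — splitness plays no role there, only that $\mathrm{int}(\tilde g)$ restricts to an automorphism of $G$ — after which one invokes the local conjugacy of maximal tori; everything else rests on the decomposition into $N$-, $T$- and $W$-torsors together with $\Pic(\ZZ)=0$ and the simple connectedness of $\Spec\ZZ$.
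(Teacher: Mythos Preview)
Your argument is correct and shares the paper's core idea for the only nontrivial step, the fppf-surjectivity of $\widetilde N\to J$: lift locally to $\widetilde g\in\widetilde G$ and then conjugate the maximal torus $^{\widetilde g}T$ back to $T$ by an element of $G$ (the paper phrases this via the strict transporter $\mathrm{Transpst}_G(T,\,^{\widetilde g}T)$, an $N$-torsor by \cite[XI.5.2]{SGA3}). The organisational differences are minor but worth noting. For representability and smoothness of $\widetilde N$ the paper quotes a general normalizer result \cite[lemme~3.4.5.(1)]{G3} and then \cite[VI$_B$.9.(xii)]{SGA3}, whereas you exploit the constancy of $J$ to write $\widetilde N$ as a disjoint union of $N$-torsors over $\Spec\ZZ$; your route is more self-contained and uses nothing beyond effective fppf descent for affine morphisms. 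For~(2) the paper instead builds the constant group $\widetilde W_0$ on the abstract group $\widetilde W(\ZZ)$ and checks that the natural map $\widetilde W_0\to\widetilde W$ is an isomorphism by the five lemma, using the same input $H^1(\ZZ,W)=1$ that you use; the two arguments are interchangeable. Finally, you actually supply the proof of the last clause (surjectivity of $\widetilde N(\ZZ)\to\widetilde W(\ZZ)$ from $H^1_{\fppf}(\ZZ,T)=\Pic(\ZZ)^{\rank T}=0$), which the paper's written proof leaves implicit.
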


\begin{proof}
(1) According to \cite[lemme 3.4.5.(1)]{G3},
$\widetilde N$ is representable by a $\ZZ$--group 
scheme which is locally of finite presentation.
We claim that the map $\widetilde N \to J$ is an epimorphism
of fppf sheaves. We are given a ring $A$ and an element
$x \in J(A)$ and want to find a fppf cover $B$ of $A$
such that $x_B \in J(B)$ lifts to $\widetilde G(B)$.
The preimage of $x$ in $\widetilde G_A$
is a $G_A$--torsor so is an affine smooth $A$--scheme 
$\Spec(B_1)$. It follows that there exists $\widetilde g_1 \in \widetilde G(B_1)$ mapping to $x_{B_1} \in J(B_1)$.
Next we consider the maximal $B_1$--torus
$^{\widetilde g_1}\! T$ of $G_{B_1}$.
We consider the strict transporter 
$\mathrm{Transpst}_{G_{B_1}}( T_{B_1}, \, ^{\widetilde g_1}\! T)$
as defined in \cite[VI$_B$.6]{SGA3}. This the
$B_1$-functor defined by
$$
\mathrm{Transpst}_{G_{B_1}}( T_{B_1}, \, ^{\widetilde g_1}\! T)(C)
= \bigl\{ g \in G(C) \, \mid \,  g T_{B_1}(C')g^{-1} = \, (^{\widetilde g_1}\! T)(C')
\enskip \hbox{for each $C$--ring $C'$}  \bigr\}
$$
for each $B_1$--ring $C$.
According to Grothendieck \cite[XI.5.2]{SGA3},  it  is representable
by an $N$--torsor over $\Spec(B_1)$ so is an affine  smooth $B_1$--scheme  $\Spec(B_2)$.
It follows that there exists $g_2 \in G(B_2)$
such that    $^{g_2}\!T_{B_1} = \, ^{\widetilde g_1}\! T$
so that $\widetilde n_2=g_2^{-1} g_{1, B_2} \in \widetilde N(B_2)$.
Thus $\widetilde n_2$ maps to $x_{B_2} \in J(B_2)$
and the claim is established.
We have then an exact sequence of $\ZZ$--group schemes
$1 \to N \to \widetilde N \to J \to 1$.
Since $N$ and $J$ are smooth, 
so is $\widetilde N$ in view of \cite[VI$_B$.9.(xii)]{SGA3}.

\smallskip

\noindent (2) A diagram chase provides an 
exact sequence of fppf $\ZZ$-sheaves
$1 \to W \to \widetilde W \to J \to 1$.
Since $\ZZ$ is simply connected, we have 
$H^1(\ZZ, W)=1$ whence an exact sequence
\begin{equation} \label{eq_weyl}
 1 \to W(\ZZ) \to \widetilde W(\ZZ) \to J(\ZZ) \to 1.
 \end{equation}
We denote by $\widetilde W_0$ the constant $\ZZ$--group
scheme attached to the abstract group  $\widetilde W(\ZZ)$.
It fits in the  exact sequence of constant $\ZZ$-group schemes 
$1 \to W \to \widetilde W_0 \to J \to 1$. 
We have a homomorphism $\widetilde \phi: \widetilde W_0 \to \widetilde W$ which is an isomorphism by diagram chasing.
Thus $\widetilde W$ is representable by a 
constant $\ZZ$-group scheme.
\end{proof}

Our goal is to construct a family $(\widetilde S_{2n})_{n \geq 1}$ of 
$\ZZ$--subgroups schemes of $\widetilde N$.
We have an action of $\widetilde W$ on $T \cong \GG_{m,\ZZ}^r$
whence a homomorphism of constant $\ZZ$--group schemes
$h: \widetilde W  \to \Aut_\ZZ(T)  \cong \GL_r(\ZZ)_\ZZ$.
We define the constant $\ZZ$--group schemes
$T_2= \bigl( T(\ZZ)\bigr)_\ZZ$,
$\widetilde N_2= \bigl( \widetilde N(\ZZ)\bigr)_\ZZ$. 
Since $T(\ZZ)= \{\pm 1 \}^r$, we observe $T_2 \cong (\ZZ/2\ZZ)^r$
so that the map $T_2 \to T$ is induced by the $\ZZ$--map 
$\ZZ/2\ZZ \to \mu_2$, $1 \mapsto -1$.
The sequence \eqref{eq_weyl} gives rises 
to  exact sequence of constant $\ZZ$--group schemes
$1 \to T_2 \to \widetilde N_2 \to \widetilde W \to 1$
which fits in a commutative exact diagram

\[\xymatrix{
 1 \ar[r]  & T_2    \ar[r] \ar[d] & \widetilde N_2
 \ar[r] \ar[d] & \widetilde W  \ar[r]   \ar@{=}[d] &1  \\ 
 1 \ar[r]  & T    \ar[r]  & \widetilde N
 \ar[r]  & \widetilde W  \ar[r]    &1 .
}\]
For each $n \geq 1$, we define the $\ZZ$--group scheme
$\widetilde S_{2n}$  as the push-out of  $\widetilde N_2$ by the  homomorphism $T_2 \to {_2T} \to {_{2n}T}$, it fits in 
in a commutative exact diagram
\[\xymatrix{
 1 \ar[r]  & T_2    \ar[r] \ar[d] & \widetilde N_2
 \ar[r] \ar[d] & \widetilde W  \ar[r]   \ar@{=}[d] &1  \\ 
 1 \ar[r]  & {_{2n}T}    \ar[r] \ar[d] & \widetilde S_{2n} \ar[d]
 \ar[r]  & \widetilde W  \ar[r]  \ar@{=}[d]  &1 \\
 1 \ar[r]  & T    \ar[r]  & \widetilde N
 \ar[r]  & \widetilde W  \ar[r]    &1 
}\]  
where the map ${_{2n}T} \to T$ is the natural embedding.
We observe that $\widetilde S_{2n}$ is a $\ZZ$--subgroup scheme of $\widetilde N$.
We denote by $d(\widetilde G)$ the l.c.m.\ of
the cardinals of  finite subgroups of $\mathrm{Im}(h) \subset \GL_r(\ZZ)$.

\newpage

\begin{stheorem}\label{thm_cgr1}
Let $X$ be a scheme. We assume one of the following:

\sm

(i) $X$ is a connected normal scheme which is locally noetherian;

\sm

(ii) The image of $h: \widetilde W(\ZZ) \to \GL_r(\ZZ)$ is finite.

\smallskip

\noindent We assume furthermore  that there exists an integer $D \geq 1$ such that $$D \, \Pic(X')=0$$
for any finite \'etale cover $X'$ of $X$ whose degree divides $d(\widetilde G)$.
We consider the $\ZZ$--subgroup 
 $\widetilde S= \widetilde S_{2 d(\widetilde G) D}$
 of $\widetilde N$.

\sm

\noindent (1) The map $H^1_{\fppf}(X, \widetilde S) \to H^1_{\fppf}(X, \widetilde N)$ is onto.

 \sm

\noindent (2) Assume furthermore that  $X= \Spec(R)$ with $R$ an LG ring.
Then the map $H^1_{\fppf}(X, \widetilde S_{2 d(\widetilde G)}) \to H^1_{\fppf}(X, \widetilde G)$ is onto.

\end{stheorem}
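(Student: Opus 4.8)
The plan is to reduce both assertions to a statement about torsors under the tori $_{m}T$ sitting inside $\widetilde{N}$ (resp. inside $\widetilde{S}$), using the exact sequences constructed just before the theorem. For part (1): starting from a class $\xi \in H^1_{\fppf}(X,\widetilde N)$, push it forward to $H^1_{\fppf}(X,\widetilde W)$; since $\widetilde W$ is a constant group scheme, this class is represented by a finite \'etale cover $X' \to X$ whose ``Galois-type'' data is controlled by the image of $\widetilde W$ in $\GL_r(\ZZ)$ — concretely, by twisting the torus $T$ one gets a twisted torus $^{\xi}T$ whose character lattice is a $\widetilde W$-twist of $\ZZ^r$, split over a cover of degree dividing $d(\widetilde G)$. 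One then wants to lift $\xi$ back along $\widetilde S_{2d(\widetilde G)D} \to \widetilde N$. The obstruction to lifting lives in $H^2_{\fppf}(X, {_{2d(\widetilde G)D}T})$ twisted, or more precisely one analyzes the connecting maps in the commutative diagram of the three-row diagram preceding the statement: the cokernel of $H^1(X,\widetilde S) \to H^1(X,\widetilde N)$ injects into $H^1$ of the quotient sheaf $T/{_{2d(\widetilde G)D}T} \cong T$ (via multiplication by $2d(\widetilde G)D$), twisted by $\xi$. The point of the hypothesis $D\,\Pic(X')=0$ for all relevant $X'$ is exactly that this twisted $H^1(X, {^{\xi}T}/{_{m}T})$, which by Hilbert 90 and a restriction–corestriction argument is a subquotient of $\Pic$ of covers of degree dividing $d(\widetilde G)$, is killed by $D$ — hence killed after composing with multiplication by $2d(\widetilde G)D$ — so every class lifts. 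This is the argument of \cite[Theorem 1.2]{CGR}, and I would follow its structure, using the representability of $\widetilde N$ and the constancy of $\widetilde W$ from Lemma \ref{lem_rep} to justify the reductions; hypotheses (i) and (ii) are the two situations in which those inputs (finiteness of relevant subgroups of $\GL_r(\ZZ)$, existence of the finite \'etale covers) are available.

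For part (2), assume $R$ is an LG ring and drop $D$ (take $D=1$, which is legitimate since over a semilocal-type ring Picard groups of finite \'etale covers vanish, so $D\,\Pic(X')=0$ holds with $D=1$). First apply part (1) with $\widetilde S = \widetilde S_{2d(\widetilde G)}$: every class in $H^1_{\fppf}(X,\widetilde N)$ comes from $H^1_{\fppf}(X,\widetilde S_{2d(\widetilde G)})$. So it suffices to show that $H^1_{\fppf}(X,\widetilde N) \to H^1_{\fppf}(X,\widetilde G)$ is onto when $X=\Spec(R)$, $R$ an LG ring. This is the genuinely arithmetic input: it is the statement that over an LG ring, every $\widetilde G$-torsor admits a reduction of structure group to the normalizer $\widetilde N = N_{\widetilde G}(T)$ of a maximal split torus. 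For the connected group $G$ itself this is the assertion that every $G$-torsor over an LG ring has a reduction to $N_G(T)$ — equivalently, using the fibration $G/N \to \mathrm{pt}$ and the ``anisotropic kernel is trivial over LG rings'' philosophy, that $G$-torsors over LG rings are ``quasi-split enough''; I would invoke the LG property in the form: $H^1(R,G) \to \prod H^1(R_{\gm},G)$ is injective over the maximal ideals, combined with the fact that over a local ring a $G$-torsor reduces to $N_G(T)$ because $G/N$ has a rational point (Bruhat-type, or just that the variety of maximal split tori has an $R_{\gm}$-point). Then one extends from $G$ to $\widetilde G$ using the exact sequence $1 \to G \to \widetilde G \to J \to 1$ and the matching $1 \to N \to \widetilde N \to J \to 1$ of Lemma \ref{lem_rep}: given a $\widetilde G$-torsor $E$, its image $J$-torsor is the same on both sides, and the fiber is a torsor under an inner form of $G$ over $R$, to which the reduction-to-$N$ statement applies after a twist.

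\textbf{Main obstacle.} The delicate point is part (1): correctly identifying the obstruction group to lifting along $\widetilde S_{2d(\widetilde G)D} \to \widetilde N$ as a twisted form of $\Pic$ of a finite \'etale cover of degree dividing $d(\widetilde G)$, and checking that the numerical factor $2d(\widetilde G)D$ is exactly what is needed to kill it. This requires unwinding the two stacked push-out squares: the transition from $T_2 = (\ZZ/2)^r$ to $_{2d(\widetilde G)D}T$ must absorb both the $2$-torsion ``basepoint'' discrepancy between $T(\ZZ)$ and $\mu_2^r$, the factor $d(\widetilde G)$ coming from the degree of the splitting cover of the twisted torus (so that restriction–corestriction applies), and the factor $D$ annihilating $\Pic$. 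One has to be careful that the relevant cohomology is computed for the \emph{twisted} forms $^{\xi}T$, $^{\xi}(_{m}T)$, not the split ones, and that the covers appearing genuinely have degree dividing $d(\widetilde G)$ — this is where the definition of $d(\widetilde G)$ as the l.c.m.\ of orders of finite subgroups of $\mathrm{Im}(h)$ is used. For part (2), the subtlety is purely in citing the right form of the LG-ring reduction theorem for torsors under reductive (and slightly non-connected) group schemes; I expect this to be available in the literature (Gille–Pianzola, or the patching papers) but it must be applied to inner forms of $G$ rather than $G$ itself, which is a harmless twist.
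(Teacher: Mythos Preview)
Your plan matches the paper's approach, and part~(2) is exactly right: the paper cites \cite[Thm.\ 2.6]{GN} for the surjectivity $H^1(R,\widetilde N)\to H^1(R,\widetilde G)$ over an LG ring (every twisted form of $\widetilde G$ admits a maximal $R$-torus) and \cite[Thm.\ 2.10]{EG} for $\Pic(R')=0$, then applies (1) with $D=1$.

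For part~(1), however, your outline blurs a genuine two-step structure and mislocates the $H^2$ obstruction. There is no $H^2$ obstruction to lifting along the \emph{inclusion} $\widetilde S\hookrightarrow\widetilde N$, and your ``cokernel injects into $H^1$ of the quotient sheaf'' does not work as stated: $\widetilde S_{2n}$ is not normal in $\widetilde N$ (for $t\in T$ and $s\in\widetilde S_{2n}$ with image $w\in\widetilde W$ one has $tst^{-1}=t\cdot({}^{w}t)^{-1}\cdot s$, and $t\cdot({}^{w}t)^{-1}$ need not lie in ${}_{2n}T$), so there is no group quotient $\widetilde N/\widetilde S$ to take $H^1$ of. The paper instead proceeds in two steps. \emph{Step~1 (an $H^2$ argument)}: push $E$ to the $\widetilde W$-torsor $F$ and set $M={}^{F}T$. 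The obstruction to lifting $F$ along the \emph{surjection} $\widetilde S_2\twoheadrightarrow\widetilde W$ is a class $\Delta([F])\in H^2_{\fppf}(X,{}_2M)$; it maps to $0$ in $H^2_{\fppf}(X,M)$ because $F$ does lift to the $\widetilde N$-torsor $E$, and Lemma~\ref{lem_torus}(3) (restriction--corestriction over the splitting cover of $M$, plus the Picard hypothesis) then shows its image in $H^2_{\fppf}(X,{}_{2d(\widetilde G)D}M)$ vanishes. Hence $F$ lifts to a $\widetilde S_{2d(\widetilde G)D}$-torsor $V$. \emph{Step~2 (an $H^1$ argument)}: push $V$ to a $\widetilde N$-torsor $E_V$ and twist by $V$, $E_V$, $F$. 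Under the torsion bijections, $\tau^{-1}([E])$ lands in the fibre of $H^1(X,{}^{E_V}\widetilde N)\to H^1(X,{}^{F}\widetilde W)$ over the basepoint, hence comes from $H^1_{\fppf}(X,M)$; one now needs $H^1_{\fppf}(X,{}_{2d(\widetilde G)D}M)\to H^1_{\fppf}(X,M)$ onto, which is Lemma~\ref{lem_torus}(2) (namely $d(\widetilde G)D\cdot H^1_{\fppf}(X,M)=0$). The factor $2d(\widetilde G)D$ is thus used once in each step, and your plan should keep them separate.
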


Recall that a ring $R$ has the property LG if whenever a polynomial 
$f\in  R[x_1, \dots, x_n]$ represents a unit over $R_\m$ for each maximal ideal $\m$ of $R$,  then $f$ represents a unit over $R$ \cite{EG}. 
We need the following auxiliary statement.

\begin{slemma} \label{lem_torus} In the setting
of Theorem \ref{thm_cgr1}, let $F$ be a $\widetilde W$--torsor over $X$ and
consider the twisted $X$--torus $M= {^FT}$ with respect to the action of 
$\widetilde W$ on $T$ via $h$.

\sm
\noindent (1) $M$ is split after a finite \'etale extension
$X'/X$ of degree $d$ dividing $d(\widetilde G)$.

\sm

\noindent (2)
We have $d(\widetilde G) D \, H^1_{\fppf}(X,M)=0$.

\sm

\noindent (3)
For each $n\geq 1$, the map
$$
\Ker\Bigl( H^2_{\fppf}(X, {_nM}) \to H^2_{\fppf}(X, M)
\Bigr) \to H^2_{\fppf}(X, {_{n d(\widetilde G) D}M})
$$
is trivial.
\end{slemma}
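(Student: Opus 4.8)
The plan is to prove Lemma \ref{lem_torus} in the order (1), (2), (3), deducing (2) and (3) from (1) by restriction--corestriction.

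\textbf{Step (1).} Since $F$ is a $\widetilde W$--torsor over $X$ and $\widetilde W$ is a constant $\ZZ$--group scheme (Lemma \ref{lem_rep}(2)), the twist $M = {^FT}$ becomes split over a finite \'etale cover $X'/X$ that trivializes $F$. Concretely, $F$ corresponds to a continuous homomorphism $\pi_1(X,\bar x) \to \widetilde W(\ZZ)$ (under hypothesis (i); under hypothesis (ii) one works directly with the finite image), and $M$ splits over the cover corresponding to the kernel of the composite $\pi_1(X,\bar x) \to \widetilde W(\ZZ) \xrightarrow{h} \GL_r(\ZZ)$. That composite factors through a finite subgroup of $\mathrm{Im}(h) \subset \GL_r(\ZZ)$, whose order divides $d(\widetilde G)$ by the very definition of $d(\widetilde G)$ as the l.c.m.\ of the cardinals of finite subgroups of $\mathrm{Im}(h)$. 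One has to be slightly careful that $X'$ can be chosen \emph{connected} of degree \emph{exactly} dividing $d(\widetilde G)$; taking the cover attached to the image subgroup and then a connected component does the job. Thus $M_{X'} \cong \GG_{m,X'}^r$.

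\textbf{Step (2).} Apply restriction--corestriction for the finite \'etale cover $p\colon X' \to X$ of degree $d \mid d(\widetilde G)$: the composite
\[
H^1_{\fppf}(X,M) \xrightarrow{p^*} H^1_{\fppf}(X',M_{X'}) \xrightarrow{p_*} H^1_{\fppf}(X,M)
\]
is multiplication by $d$. Since $M_{X'} \cong \GG_{m,X'}^r$, Hilbert 90 gives $H^1_{\fppf}(X', M_{X'}) \cong \Pic(X')^r$. By hypothesis $D\,\Pic(X') = 0$ (as $\deg(X'/X) = d$ divides $d(\widetilde G)$), so $D\,H^1_{\fppf}(X',M_{X'}) = 0$, and therefore $dD$ kills $H^1_{\fppf}(X,M)$; a fortiori $d(\widetilde G)D \cdot H^1_{\fppf}(X,M) = 0$.

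\textbf{Step (3).} Fix $n \geq 1$ and let $\xi \in \Ker\bigl(H^2_{\fppf}(X,{_nM}) \to H^2_{\fppf}(X,M)\bigr)$. From the Kummer sequence $1 \to {_nM} \to M \xrightarrow{n} M \to 1$ (valid since $M$ is a torus, hence $n$ is an isogeny), $\xi$ lifts to some class $\eta \in H^1_{\fppf}(X,M)$ with $\partial(\eta) = \xi$. By (2), $d(\widetilde G)D\cdot\eta = 0$, so $d(\widetilde G)D\cdot\xi = \partial(d(\widetilde G)D\cdot\eta) = 0$. Now chase the diagram relating the Kummer sequences for ${_nM}$ and ${_{nd(\widetilde G)D}M}$: multiplication by $d(\widetilde G)D$ on $M$ induces a map ${_{nd(\widetilde G)D}M} \to {_nM}$, and the image of $\xi$ under ${_nM} \to {_{nd(\widetilde G)D}M}$ (the inclusion) pulled back through this is $d(\widetilde G)D\cdot\xi$ computed in $H^2_{\fppf}(X, {_{nd(\widetilde G)D}M})$ — which one identifies, via the commutative ladder of Kummer sequences, with the image of $\xi$ under $H^2({_nM}) \to H^2({_{nd(\widetilde G)D}M})$. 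Hence that image vanishes, which is exactly the assertion. The one point to verify carefully is the commutativity of the ladder
\[
\xymatrix{
1 \ar[r] & {_nM} \ar[r] & M \ar[r]^n \ar@{=}[d] & M \ar[r] \ar[d]^{d(\widetilde G)D} & 1 \\
1 \ar[r] & {_{nd(\widetilde G)D}M} \ar[r] \ar[u] & M \ar[r]^{\ nd(\widetilde G)D\ } & M \ar[r] & 1
}
\]
so that the connecting maps are compatible; this is standard.

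\textbf{Main obstacle.} The genuinely delicate point is Step (1): ensuring the splitting cover $X'$ can be taken connected with degree dividing $d(\widetilde G)$ rather than merely bounded by $|\mathrm{Im}(h)|$ or some non-canonical quantity, and handling the two hypotheses (i) and (ii) uniformly (for (i) one needs $X$ normal and locally noetherian to make sense of $\pi_1$ and finite \'etale covers with the right properties; for (ii) finiteness of $\mathrm{Im}(h)$ is what makes $h(F)$ a torsor under a finite group so that the splitting cover exists without topological hypotheses on $X$). Once (1) is in hand, (2) and (3) are formal transfer-and-Kummer arguments.
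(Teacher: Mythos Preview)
Your strategy is the paper's: (1) produce a finite \'etale splitting cover of degree dividing $d(\widetilde G)$ from the fact that $\widetilde W$ is constant, (2) restriction--corestriction plus Hilbert~90, (3) a ladder of Kummer sequences. Steps (1) and (2) are correct and essentially identical to the paper's argument; the paper likewise separates hypotheses (i) and (ii), invoking isotriviality of $\widetilde W$--torsors over connected normal locally noetherian schemes in case (i).

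Step (3) has the right ingredients but the assembly is faulty. The ladder you drew is not a morphism of short exact sequences: the left arrow points up while the others point down, and with your maps the left square does not commute. Your intermediate conclusion $d(\widetilde G)D\cdot\xi=0$ in $H^2_{\fppf}(X,{_nM})$ is correct but does not yield $\iota_*(\xi)=0$ in $H^2_{\fppf}(X,{_{n d(\widetilde G)D}M})$: from the map $\mu\colon{_{n d(\widetilde G)D}M}\to{_nM}$ you only get $\mu_*(\iota_*(\xi))=d(\widetilde G)D\cdot\xi=0$, i.e.\ $\iota_*(\xi)\in\ker\mu_*$, which is strictly weaker. The correct ladder has left vertical the \emph{inclusion} $\iota\colon {_nM}\hookrightarrow{_{n d(\widetilde G)D}M}$ (downward), middle vertical the identity, and right vertical $\times d(\widetilde G)D$ (downward); both squares then commute, and the induced square on cohomology reads $\iota_*\circ\partial_n=\partial_{n d(\widetilde G)D}\circ(\times d(\widetilde G)D)$ on $H^1_{\fppf}(X,M)$. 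Hence $\iota_*(\xi)=\iota_*(\partial_n(\eta))=\partial_{n d(\widetilde G)D}\bigl(d(\widetilde G)D\cdot\eta\bigr)=0$ by (2). This is precisely the paper's diagram chase; note that in the paper's printed short-exact-sequence ladder the label $\times d(\widetilde G)D$ is placed on the middle vertical rather than the right (with that placement the right square would not commute), but the subsequent cohomology diagram, with $\times d(\widetilde G)D$ on the $H^1$--column, is the correct one and is what both you and the paper are really using.
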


\begin{proof}(1) We need to distinguish the cases (i) and (ii)
of Theorem \ref{thm_cgr1}.

\sm

\noindent{\it Case (i).}
According to \cite[lemme 2.14.(2)]{G6},
the $\widetilde W$--torsor $F$ is isotrivial.
Let $Y \to X$ be a finite connected  Galois  \'etale  cover
of group $\Gamma$ which splits $F$.
Then the class of $[F]$ is given by a homomorphism
$\phi: \Gamma \to \widetilde W(\ZZ)= \widetilde W(Y)$
and the isomorphism class of $M$ is given by the composite
$\Gamma \to \widetilde W(\ZZ) \xrightarrow{h} \GL_r(\ZZ)$.
Moding out by the kernel of this homomorphism provides
a finite connected  Galois \'etale cover $X'$ of $X$
of degree $d$ dividing $d(\widetilde G)$ and which splits $M$.

\sm

\noindent{\it Case (ii).} The scheme $X$ is not assumed connected anymore
but $\mathrm{Im}(h)$ is assumed to be finite.
It follows that  the $X$--torus $M$ is isotrivial,
it splits after  a finite connected  Galois  \'etale  cover
$X'/X$ of degree $d$ which divides $d(\widetilde G)$.

\sm

\noindent (2) By restriction-corestriction, the kernel
$\ker\bigl( H^1_{\fppf}(X,M) \to H^1_{\fppf}(X',M)  \bigr)$
is killed by $d$. On the other hand, we have
$M_{X'} \cong T_{X'} \simlgr \GG_{m,X'}^r$
so that  $H^1_{\fppf}(X',M)  \cong \Pic(X')^r$.
It follows that $D H^1_{\fppf}(X',M)=0$ whence
$d(\widetilde G)\,  D \, H^1_{\fppf}(X,M)=0$.

\smallskip

\noindent (3)  We consider the commutative diagram of exact 
sequences
\[\xymatrix{
 1 \ar[r]  & {_nM}    \ar[r] \ar[d] & M
 \ar[r]^{\times n} \ar[d]_{\times d(\widetilde G)\,  D} & M\ar[r]   \ar@{=}[d] &1  \\ 
 1 \ar[r]  & {_{n d(\widetilde G)\,  D}M}    \ar[r] & 
 \qquad M \qquad
 \ar[r]^{\qquad \times n d(\widetilde G)\,  D \qquad}  & M \ar[r]  &1.
}\]
It induces a commutative diagram of exact sequences
\[\xymatrix{
 H^1_{\fppf}(X,M) \ar[d]_{\times d(\widetilde G)\,  D}  \ar[r]  & H^2_{\fppf}(X,{_nM})    \ar[r] \ar[d] & H^2_{\fppf}(X,M)  \ar@{=}[d]   \\ 
 H^1_{\fppf}(X,M)  \ar[r]  & H^2_{\fppf}(X,{_{n d(\widetilde G)\,  D}M}  )   \ar[r] & H^2_{\fppf}(X,M)  .
}\]
Since the left handside vertical map is trivial by (1),
we obtain (3) by diagram chasing.
\end{proof}

We proceed to the proof of Theorem \ref{thm_cgr1}.

\begin{proof}
(1) Let $E$ be a $\widetilde N$-torsor 
and consider the $\widetilde W$--torsor
 $F= E \wedge^{\widetilde N} \widetilde W$. We consider
 the following commutative diagram of exact sequences

\[\xymatrix{
 1 \ar[r]  & {_2T}    \ar[r] \incl[d] & \widetilde S_2 
 \ar[r] \incl[d] & \widetilde W \ar[r] \ar@{=}[d] &1  \\ 
 1 \ar[r]  & {_{2 d(\widetilde G) \, D}T}    \ar[r] \incl[d] & \widetilde S_{2 d(\widetilde G) \, D} 
 \ar[r] \incl[d] & \widetilde W \ar[r] \ar@{=}[d] &1  \\ 
 1 \ar[r]  & T   \ar[r]  & \widetilde N 
 \ar[r]  & \widetilde W \ar[r] &1 . \\ 
}\]
The obstruction to lift $F$ to a $\widetilde S_2$--torsor
is a class $\Delta([F]) \in H^2_{\fppf}(X, {_2M})$
where the $X$--torus $M={^FT}$ is the twist of $T_X$ by $F$ \cite[IV.4.2]{Gd}.
The point is that its image in $H^2_{\fppf}(X,M)$
vanishes since $F$ lifts to the $\widetilde N$--torsor
$E$ in view of the compatibilities of the obstructions
\cite[\S IV.3.6]{Gd}. According to Lemma \ref{lem_torus}.(3),
we have
$\Delta([F]) \in \ker\Bigl( H^2_{\fppf}(X, {_2M})
\to H^2_{\fppf}(X,   {_{2 d(\widetilde G) \, D}M})\Bigr)$.
Again by functoriality, it follows that
the $\widetilde W$--torsor $F$ lifts to
a $\widetilde S_{2 d(\widetilde G) \, D}$-torsor
$V$. We consider the $\widetilde N$-torsor $E_V=  \widetilde N \rtimes^{\widetilde S_{2 d(\widetilde G) \, D}} V$.
Twisting by $V$ and $E_V$, the first diagram provides
the commutative diagram of exact 
sequence of $X$--group schemes

\begin{equation}  \label{diag18}
\xymatrix{
 1 \ar[r]  & {_{2 d(\widetilde G) \, D}M}    \ar[r] \incl[d] & {^V\!\widetilde S}_{2 d(\widetilde G) \, D} 
 \ar[r] \incl[d] & {^F \widetilde W} \ar[r] \ar@{=}[d] &1  \\ 
 1 \ar[r]  & M   \ar[r]  & {^{E_V}\!\widetilde N} 
 \ar[r]  & {^F\!\widetilde W} \ar[r] &1  .
}
\end{equation}

\noindent Next we consider the commutative diagram of torsion bijections
(with respect to the torsors $V, E_V$ and $F$)
\[\xymatrix{
H^1_{\fppf}(X, {^V\!\widetilde S}_{2 d(\widetilde G) \, D} ) \ar[r]^{\tau_2}_\sim \ar[d] &
H^1_{\fppf}(X, \widetilde S_{2 d(\widetilde G) \, D} ) \ar[d] \\
H^1_{\fppf}(X, {^{E_V}\!\widetilde N}) \ar[r]^{\tau}_\sim  \ar[d]  & H^1_{\fppf}(X, { \widetilde N})  \ar[d] \\
H^1_{\fppf}(X, {^F\!\widetilde W}) \ar[r]^{\tau'}_\sim  & H^1_{\fppf}(X, \widetilde W) .
}\]
Since $[E]$ maps to $[F]$ by
the map $H^1_{\fppf}(X, \widetilde N) \to H^1_{\fppf}(X, { \widetilde W})$, it follows that
 the class $\tau^{-1}([E])$ maps to $1 \in  H^1_{\fppf}(X, {^F\!\widetilde W})$.
 On the other hand, the diagram \eqref{diag18} induces a commutative diagram of exact sequences of pointed sets

\begin{equation}  \label{diag19}
\xymatrix{
 H^1_{\fppf}(X , {_{2 d(\widetilde G)\, D}M})     \ar[r] \ar[d] &
 H^1_{\fppf}(X,{^V\!\widetilde S}_{2 d(\widetilde G) \, D})
 \ar[r] \ar[d] &
 H^1_{\fppf}(X,{^F\!\widetilde W}) \ar@{=}[d]\\
  H^1_{\fppf}(X ,M  )  \ar[r]  & H^1_{\fppf}(X , {^{E_V}\!\widetilde N} )\ar[r]  &
 H^1_{\fppf}(X,{^F\!\widetilde W})
}
\end{equation}
and the point is that the left vertical map is onto in view of Lemma \ref{lem_torus}.(2).
A diagram chase shows that $\tau^{-1}([E])$ arises from a class of
$H^1_{\fppf}(X,{^V\!\widetilde S}_{2 d(\widetilde G) \, D})$. By applying $\tau$,
we obtain that  $[E]$ arises from a class of
$H^1_{\fppf}(X,{\widetilde S}_{2 d(\widetilde G) \, D})$.

\smallskip

\noindent (2) We assume now that $X=\Spec(R)$ with $R$ a LG ring.
According to \cite[Thm.\ 2.6]{GN}, all twisted forms of $\widetilde G$
admit a maximal $R$--torus so that
the map  $H^1(R,N_{\widetilde G}(T)) \to
H^1(R, \widetilde G)$ in onto.
The ring $R$ satisfies that $\Pic(R')=0$ for each
finite extension $R'/R$ \cite[Thm.\ 2.10]{EG}.
The condition on Picard groups is satisfied with $D=1$
so that (1) implies the wished statement.
\end{proof}

Let $l$ be a prime. Our goal is to provide
a localized version at $l$ of Theorem \ref{thm_cgr1}
when $J$ is a finite constant $\ZZ$-group so that
$\widetilde W$ is finite as well in view of Lemma \ref{lem_rep}.(1).
For $n \geq 1$, we need to construct a localized version at $l$
of the $\ZZ$--group scheme $\widetilde S_{2n}$ fitting in the sequence
\begin{equation}\label{eq_2n}
1 \to {_{2n}T} \to \widetilde S_{2n} \to \widetilde W \to 1. 
\end{equation}
We denote by $\widetilde W(\ZZ)^{(l)}$ a $l$-Sylow subgroup of
$\widetilde W(\ZZ)$ and by $\widetilde W^{(l)} \subset \widetilde W(\ZZ)$
the associated constant $\ZZ$-group scheme.  The first step is to 
pull back the above sequence by $\widetilde W^{(l)}  \to \widetilde W$ so that we obtain 
\begin{equation} \label{eq_bemol}
1 \to {_{2n}T} \to \widetilde S_{2n} \to \widetilde W^{(l)} \to 1. 
\end{equation}

\noindent{\it Case $l=2$.}
We write $n=2^a m$ with $m$ odd and 
define $\widetilde S_{2n}^{(2)}$ as the  push-out of the extension
\eqref{eq_bemol} with respect to  the map ${_{2n}T} \to {_{2^{a+1}}T}$. 
It is representable by an affine  $\ZZ$--group scheme\footnote{We mod out 
$\widetilde S_{2n} \times_\ZZ {_{2^{a+1}}T}$ by a finite diagonalizable $\ZZ$--subgroup.}
which is a $\ZZ$--subgroup scheme of $\widetilde S_{2n}$ and a fortiori of
$\widetilde N$. 
According to \cite[VI$_B$.9.2.(viii)]{SGA3}, the affine $\ZZ$--group scheme 
$\widetilde S_{2n}^{(2)}$ is flat and finite free.  

\begin{equation} \label{eq_diese}
1 \to {_{2^{a+1}}T} \to \widetilde S_{2n}^{(2)} \to \widetilde W^{(2)} \to 1. 
\end{equation}

\noindent{\it Case $l$ is odd.}
We write $n=l^b m$ with $(m,l)=1$  and 
define $\widetilde S_{2n}^{(l)}$ as the  push-out of the extension
\eqref{eq_bemol} with respect to  the map ${_{2n}T} \to {_{l^b}T}$.
As above it is representable by a finite flat affine $\ZZ$--group scheme
(which is a $\ZZ$--subgroup scheme of $\widetilde S_{2n}$ and a fortiori of
$\widetilde N$)
 and fits in an commutative diagram of exact sequences

\begin{equation} \label{eq_becart}
\xymatrix{
 1 \ar[r]  & {_{l^b}T}    \ar[r] \ar[d] & \widetilde S_{2n}^{(l)}
 \ar[r] \ar[d] & \widetilde W^{(l)}\ar[r]   \ar[d] &1  \\ 
 1 \ar[r]  & {_{2n}T}    \ar[r] & \widetilde S_{2n} \ar[r] &  \widetilde W 
 \ar[r]    &1 .
}
\end{equation}

We come back to the general case and 
 consider the quotient $\ZZ$-scheme $\widetilde S_{2n}/ \widetilde S^{(l)}_{2n}$
as defined in \cite[\S III.2.6]{DG}. It is an affine 
$\ZZ$-scheme and the quotient map 
$\widetilde S_{2n} \to \widetilde S_{2n}/ \widetilde S_{2n}^{(l)}$ is
 an $\widetilde S_{2n}^{(l)}$-torsor.
According to \cite[VI$_B$.9.2.(xi), (xii)]{SGA3},
$\widetilde S_{2n}/ \widetilde S_{2n}^{(l)}$ is flat and quasi-finite over $\ZZ$.
According to \cite[Tag 0AH6]{Stacks}, $\widetilde S_{2n}/ \widetilde S_{2n}^{(l)}$ is proper 
over $\ZZ$ so is finite \cite[Tag 02OG]{Stacks}.
Since a finite flat $\ZZ$--module is free, 
$\widetilde S_{2n}/ \widetilde S_{2n}^{(l)}$ is finite free over $\ZZ$
of degree $\deg\bigl(\widetilde S_{2n}) / \deg(\widetilde S_{2n}^{(l)} \bigr) $
 which is prime to $l$.

\begin{stheorem}\label{thm_cgr2} In the above setting, we assume
that $\widetilde W$ is finite over $\ZZ$ (equivalently $J$ is finite).
We assume furthermore  that 
there exists an integer 
$D$  such that $$ D \, \Pic(X')\{l\}=0$$
for any finite \'etale cover $X'$ of $X$ whose degree divides $d(\widetilde G)$.
We consider the $\ZZ$--subgroup 
 $\widetilde S^{(l)}= \widetilde S_{2 d(\widetilde G) D}^{(l)}$
 of $\widetilde N$.

\sm

\sm

\noindent (1) Let $E$ be a $\widetilde N$--torsor.
Then there exists a finite locally free flat cover $X'$ of $R$ of constant degree
prime to $l$ such that 
the $G$--torsor $E_{X'}$ admits a reduction to  
 $\widetilde S^{(l)}$.

\sm

\noindent (2) Assume that $X=\Spec(R)$ where $R$ is a LG ring.
Let  $E$ be a  $\widetilde G$--torsor. Then there exists
a finite locally free flat cover $R'$ of $R$ of constant degree
prime to $l$ such that 
the $G$--torsor $E_{R'}$ admits a reduction to  
 $\widetilde S_{2 d(\widetilde G)}^{(l)}$.
 
 \sm

\noindent (3)
If $R$ is a field, we can require furthermore that $R'$ is  a field in (2).
 
\end{stheorem}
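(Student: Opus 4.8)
The plan is to prove (1) by localising at $l$ the argument of Theorem \ref{thm_cgr1}.(1): a preliminary prime-to-$l$ cover pushes the Weyl torsor into the $l$-Sylow $\widetilde W^{(l)}$, after which the obstruction-theoretic chase only involves the $l$-primary cohomology of a twisted torus, controlled by the hypothesis $D\,\Pic(-)\{l\}=0$; parts (2) and (3) then follow formally. For (1), write $n=d(\widetilde G)D$ and let $F=E\wedge^{\widetilde N}\widetilde W$ be the $\widetilde W$-torsor attached to $E$. Since $\widetilde W$ is a finite constant $\ZZ$-group, the $X$-scheme $X_1:=F\wedge^{\widetilde W}\bigl(\widetilde W/\widetilde W^{(l)}\bigr)$ is finite \'etale of constant degree $[\widetilde W(\ZZ):\widetilde W^{(l)}(\ZZ)]$, which is prime to $l$; over $X_1$ the pull-back of $F$ acquires a reduction to a $\widetilde W^{(l)}$-torsor $F_1$, and hence, since $\widetilde N/\widetilde N^{(l)}=\widetilde W/\widetilde W^{(l)}$ where $\widetilde N^{(l)}:=\widetilde N\times_{\widetilde W}\widetilde W^{(l)}$ is the preimage of $\widetilde W^{(l)}$ in $\widetilde N$, the torsor $E_{X_1}$ acquires a reduction to a torsor $E_1$ under $\widetilde N^{(l)}$. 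This $\widetilde N^{(l)}$ sits in $1\to T\to\widetilde N^{(l)}\to\widetilde W^{(l)}\to 1$ and contains $\widetilde S^{(l)}=\widetilde S^{(l)}_{2n}$ as a $\ZZ$-subgroup scheme, so it is enough to reduce $E_1$ from $\widetilde N^{(l)}$ to $\widetilde S^{(l)}$ over $X_1$; one then takes $X'=X_1$, which is finite locally free over $X$ of constant degree prime to $l$.

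To perform this last reduction, repeat over $X_1$ the diagram chase of the proof of Theorem \ref{thm_cgr1}.(1), with $\widetilde N,\widetilde W$ replaced by $\widetilde N^{(l)},\widetilde W^{(l)}$, the group $\widetilde S_2$ by the smallest member of the $l$-localised family (torus part ${_2T}$ if $l=2$, trivial torus part if $l$ is odd), and $\widetilde S_{2n}$ by $\widetilde S^{(l)}_{2n}$. The twisted torus $M={}^{F_1}T$ over $X_1$ is isotrivial, split by a finite \'etale cover of degree dividing $d(\widetilde G)$ (its isomorphism class depends only on the composite $\widetilde W^{(l)}(\ZZ)\hookrightarrow\widetilde W(\ZZ)\xrightarrow{h}\GL_r(\ZZ)$, whose image is a finite subgroup of $\mathrm{Im}(h)$). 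Because the torus part of $\widetilde S^{(l)}_{2n}$ is the $l$-power-torsion group scheme ${_{l^b}T}$ (resp.\ ${_{2^{a+1}}T}$), all the obstruction classes for the successive liftings of $F_1$, as well as the surjectivity statement needed to match $[E_1]$ with a lift, take place in the $l$-primary cohomology of $M$ and of its torsion subgroups; the $l$-primary analogues of Lemma \ref{lem_torus}.(2)--(3), proved by the same restriction--corestriction argument and yielding $d(\widetilde G)D\cdot H^1_{\fppf}(X_1,M)\{l\}=0$, supply exactly the needed input, and the chase goes through, giving a reduction of $E_1$ to a $\widetilde S^{(l)}$-torsor over $X_1$.

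For (2), assume $X=\Spec(R)$ with $R$ an LG ring and let $E$ be a $\widetilde G$-torsor. By \cite[Thm.\ 2.6]{GN} every twisted form of $\widetilde G$ over $R$ admits a maximal $R$-torus, so $H^1(R,\widetilde N)\to H^1(R,\widetilde G)$ is onto and $E$ reduces to an $\widetilde N$-torsor; by \cite[Thm.\ 2.10]{EG} one has $\Pic(R')=0$ for every finite $R$-algebra $R'$, so the hypothesis of (1) holds with $D=1$, and (1) gives a finite locally free cover $R'/R$ of constant degree prime to $l$ over which $E$ reduces to a $\widetilde S^{(l)}_{2d(\widetilde G)}$-torsor. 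For (3), if $R=k$ is a field then $R'$ is the spectrum of a finite $k$-algebra $A$ with $\dim_k A$ prime to $l$; decomposing $A=\prod_i A_i$ into local Artinian factors with residue fields $\kappa_i$, one has $\dim_k A=\sum_i\dim_k A_i$ with $[\kappa_i:k]\mid\dim_k A_i$, so some $[\kappa_i:k]$ is prime to $l$, and replacing $R'$ by such a $\kappa_i$ (a finite field extension of $k$ of degree prime to $l$) and pulling back the reduction along $\Spec\kappa_i\to\Spec A$ settles (3).

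The principal difficulty is the second half of (1): one must verify that, once the Weyl torsor has been pushed into $\widetilde W^{(l)}$, the obstruction classes and the surjectivity statement in the proof of Theorem \ref{thm_cgr1}.(1) genuinely involve only the $l$-primary part of $H^1_{\fppf}$ of the twisted torus $M$, and that the \'etale cover splitting $M$ can be chosen of degree dividing $d(\widetilde G)$ so that the weakened hypothesis $D\,\Pic(-)\{l\}=0$ still applies — in other words, one needs the $l$-primary refinement of Lemma \ref{lem_torus} with the same numerical constant $d(\widetilde G)D$, together with the attendant degree bookkeeping for the auxiliary covers. The remaining steps are routine.
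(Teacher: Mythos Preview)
Your route differs from the paper's and has a genuine gap. The paper does \emph{not} redo the obstruction chase of Theorem~\ref{thm_cgr1}.(1) in an $l$-localised form. Instead it applies Theorem~\ref{thm_cgr1}.(1) as a black box over $X$ to reduce the $\widetilde N$-torsor $E$ to a $\widetilde S_{2d(\widetilde G)D}$-torsor $F$, and then sets
\[
X'\;=\;F/\widetilde S^{(l)}_{2d(\widetilde G)D}\;=\;F\wedge^{\widetilde S_{2d(\widetilde G)D}}\bigl(\widetilde S_{2d(\widetilde G)D}/\widetilde S^{(l)}_{2d(\widetilde G)D}\bigr),
\]
which is finite locally free of constant degree $\deg\bigl(\widetilde S_{2d(\widetilde G)D}/\widetilde S^{(l)}_{2d(\widetilde G)D}\bigr)$, prime to $l$ by the paragraph preceding the theorem. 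Over $X'$ the torsor $F_{X'}$ tautologically acquires a reduction to $\widetilde S^{(l)}$, hence so does $E_{X'}$. That is the entire argument for (1); your treatment of (2) and (3) matches the paper's.

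The gap in your approach is the assertion that after passing to $X_1$ and replacing $\widetilde N,\widetilde W$ by $\widetilde N^{(l)},\widetilde W^{(l)}$, the diagram chase ``takes place in the $l$-primary cohomology of $M$''. It does not. In the analogue of diagram~\eqref{diag19} the class $\tau^{-1}([E_1])$ lies in the image of $H^1_{\fppf}(X_1,M)\to H^1_{\fppf}(X_1,{}^{E_V}\!\widetilde N^{(l)})$, and you must lift a preimage $\beta\in H^1_{\fppf}(X_1,M)$ to $H^1_{\fppf}(X_1,{_{l^b}M})$, i.e.\ show $l^b\beta=0$. But $M$ is the full twisted torus, and nothing forces $\beta$ to be $l$-primary (or even torsion): the weakened hypothesis only gives $d(\widetilde G)D\cdot H^1_{\fppf}(X_1,M)\{l\}=0$, which says nothing about the prime-to-$l$ part of $\beta$. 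The same obstruction already appears in the lifting step: the class $\Delta\in\Ker\bigl(H^2_{\fppf}(X_1,{_{l^c}M})\to H^2_{\fppf}(X_1,M)\bigr)$ is the Kummer boundary of some $\gamma\in H^1_{\fppf}(X_1,M)$, and killing its image in $H^2_{\fppf}(X_1,{_{l^b}M})$ via the mechanism of Lemma~\ref{lem_torus}.(3) requires $l$-power multiplication on $\gamma$ to give zero, which again you cannot deduce. (For $l$ odd there is a further issue: your ``smallest member'' has trivial torus part, so it is $\widetilde W^{(l)}$ itself, and there is no map $\widetilde W^{(l)}\to\widetilde S^{(l)}_{2n}$ of extensions unless the latter splits; the two-step obstruction comparison therefore does not even get started.) The paper's device---first perform the \emph{full} reduction to $\widetilde S_{2n}$, and only then pass to a prime-to-$l$ cover via the finite quotient $\widetilde S_{2n}/\widetilde S^{(l)}_{2n}$---bypasses all of this.
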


We proceed  to the proof of Theorem \ref{thm_cgr2}.

\begin{proof} (1) According to Theorem \ref{thm_cgr1}.(1), 
the $\widetilde N$-torsor $\widetilde E$ admits
a reduction to a $\widetilde S_{2 d(\widetilde G)D}$-torsor
$F$. We consider the $\ZZ$-scheme
$$
X'=F/ \widetilde S_{2 d(\widetilde G)D}^{(l)}= 
F \wedge^{\widetilde S_{2 d(\widetilde G)D}} \widetilde S_{2 d(\widetilde G)D}/ \widetilde S_{2 d(\widetilde G)D}^{(l)}
$$
of reductions of $F$ to $\widetilde S_{2 d(\widetilde G)D}^{(l)}$.
By descent, it is finite locally free of degree
$\deg\bigl( \widetilde S_{2 d(\widetilde G)D}/\widetilde S_{2 d(\widetilde G)D}^{(l)} \bigr)$ which is prime to $l$.
It follows that $E_{X'}$ admits a reduction to $\widetilde S_{2 d(\widetilde G)D}^{(l)}$.

\sm

\noindent (2) The argument is similar
by applying this times Theorem \ref{thm_cgr1}.(2).

\sm

\noindent (3) We assume that $R$ is a field $k$.
Then the basic Lemma \ref{lem_coprime} below provides 
a field extension $k'/k$ of degree prime to $l$
such that $X'(k') \not = \emptyset$. This is the reason
why we can improve (2) in this case.
\end{proof}

\begin{slemma}\label{lem_coprime} Let $l$ be a prime and let 
$k$ be a field and let $X$ be a finite  $k$-scheme of degree $d$
prime to $l$. Then there exists a field extension $k'/k$
satisfying the three following facts:

\sm

(i) $([k':k], l)=1$;

\sm

(ii) $[k':k] \leq d$;

\sm

(iii) $X(k') \not=\emptyset$.
 
\end{slemma}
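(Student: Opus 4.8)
The plan is to reduce $X$ to one of its connected components and then to pass to a maximal-degree component, using the multiplicativity of degree in a product decomposition. First I would write $X = \coprod_{i=1}^{r} \Spec(A_i)$ as a finite disjoint union of its connected components, where each $A_i$ is a finite $k$-algebra, say of degree $d_i = \dim_k A_i$, so that $d = \sum_i d_i$. Since $(d,l)=1$, not every $d_i$ can be divisible by $l$; choose an index $i_0$ with $(d_{i_0}, l) = 1$. Replacing $X$ by $\Spec(A_{i_0})$, we may assume $X = \Spec(A)$ with $A$ a finite $k$-algebra of degree $d$ prime to $l$ and $\Spec(A)$ connected (hence $A$ local).

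Next I would pass from $A$ to its residue field. Let $\m$ be the maximal ideal of the local ring $A$ and set $k' = A/\m$; this is a finite field extension of $k$ with $[k':k] \mid d$, so in particular $[k':k] \le d$ and $([k':k],l)=1$ (a divisor of an integer prime to $l$ is prime to $l$). The quotient map $A \to k'$ is a $k$-algebra homomorphism, i.e.\ a $k$-point of $X$ with values in $k'$, so $X(k') \ne \emptyset$. This gives (i), (ii), (iii) simultaneously.

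The only subtlety worth stating carefully is the reduction in the first paragraph: one should note that a finite $k$-algebra is a finite product of local Artinian $k$-algebras (it is Artinian, hence a product of local rings), which is exactly the decomposition into connected components of $\Spec$; the degree is additive across this product, which is what forces the existence of a component of degree prime to $l$. I do not expect a genuine obstacle here — the statement is elementary — but the one point requiring a word of justification is that $[k':k]$ divides $d_{i_0} = \dim_k A$: this holds because $A$ is a $k'$-vector space via $A \to k' = A/\m$ only after choosing a splitting, so instead one argues that $\dim_k A = [k':k] \cdot \mathrm{length}_A(A)$ by filtering $A$ by powers of $\m$ and noting each successive quotient is a $k'$-vector space. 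Either way $[k':k] \mid d$, and the lemma follows.
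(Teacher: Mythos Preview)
Your proof is correct and follows essentially the same route as the paper: decompose $k[X]$ as a product of local Artinian $k$--algebras, pick a factor whose residue-field degree is prime to $l$, and take that residue field as $k'$. The paper argues directly that since $d=\sum_i e_i[k_i:k]$ with $(d,l)=1$ some $[k_i:k]$ must be prime to $l$, whereas you first select a component of total degree prime to $l$ and then use the length filtration to get $[k':k]\mid d_{i_0}$; your version is slightly more explicit about why the divisibility holds, but the substance is the same.
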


\begin{proof} According to \cite[Tag 0AAX]{Stacks}, we have $k[X]=A_1 \times \cdots \times A_c$ where $A_i$ is a (finite) Artinian $k$--algebra of residue field  $k_i$. We have $X(k_i) \not = \emptyset$
for each $i$ and  $d= \sum\limits_{i=1}^c [k_i:k]^{e_i}$.
Since $l$ and $d$ are coprime, there exists $i$ such that 
$[k_i:k]$ is prime to $l$.
\end{proof}

\section{Patching techniques}
 
\subsection{Setting}  \label{subsec_setting} 
We recall the setting.
Let $B$ be a  complete (excellent) discrete valuation ring with fraction field $K$, residue field $k$ and
uniformizing parameter $t$. 
Let $F$ be a one-variable function field over $K$
and let $\gX$ be a normal model of $F$, i.e.\ a normal connected
projective $B$-curve with function field $F$.
 We denote by $Y$ the closed fiber of $\gX$ and fix a separable closure $F_s$ of $F$.
   
 For each point $P \in Y$, let $R_P$ be the local ring of
 $\gX$ at $P$; its completion $\widehat R_P$ is a domain 
 with fraction field denoted by $F_P$.

 For each subset $U$ of $Y$ that is contained
 in an irreducible component of $Y$ and does not meet the other components, 
we define
 $R_U= \bigcap\limits_{P \in U} R_P \subset F$. We denote by $\widehat R_U$
 the $t$--adic completion of $R_U$.
 The rings $R_U$ and $\widehat R_U$ are excellent normal domains 
 and we denote by $F_U$ the fraction field of $\widehat R_U$
 \cite[Remark 3.2.(b)]{HHK3}.
 
 Each height one prime $\cp$ in $\widehat R_P$ that contains
  $t$ defines a branch of $Y$ at $P$ lying 
 on some irreducible component of $Y$. 
 The $t$-adic completion  $\widehat R_{\cp}$ of the local ring
 $R_{\cp}$ of $\widehat R_P$ at $\cp$ is a complete DVR
 whose fraction field is denoted by $F_{\cp}$. 
 The field $F_{\cp}$ contains also $F_U$ if $U$ 
 is an irreducible open subset of $Y$ such that $P \in \ol{U} \setminus U$.
 We have then a diagram of fields
 
\[\xymatrix{
 &   F_{\cp}  & \\ 
 F_P \ar[ru] & & F_U  . \ar[lu]
}\]

 \begin{ssetting} \label{setting_hhk} {\rm
 Let $\cP$ be a non-empty finite set of closed points of $Y$ that contains
 all the closed points at which distinct irreducible components meet
 and such that each component contains at least one point of $\cP$.
 Let $\cU$ be the set of connected components of $Y \setminus \cP$ and let $\cB$
 be the set of branches of $Y$ at points of $\cP$.
 This yields a finite inverse system of field $F_P, F_U, F_\gp$ (for $P \in \cP$;
 $U \in \cU$, $\gp \in \cB$) where $F_P, F_U \subset F_\gp$ if $\gp$
 is a branch of $Y$ at $P$ lying in the closure of $U$.
 }
\end{ssetting}

\subsection{Torsors and models}

\begin{sproposition} \label{prop_key}
Let $1 \to G \to \widetilde G \to J \to 1$
be an exact sequence of smooth $B$--group schemes
where $G$ is reductive and $J$ is twisted constant\footnote{
i.e.\ there exists an abstract group $\Gamma$ such that 
$J$ is locally isomorphic to the $B$--constant group scheme  $\Gamma_B$.}.
Let $E_1$ (resp.\ $E_2$) be a $\widetilde G$-torsor over $F$
admitting a reduction to a finite \'etale $B$--subgroup of 
$S_1$ (resp.\ $S_2$) of $\widetilde G$ with  prime to $p$ order. 

Assume that $E_{1,F_v} \cong E_{2,F_v}$ for any
divisorial discrete valuation $v$ on $F$.
Then there exists a regular model 
$\gQ$ of $F$ as above together with $Y, \cP$, etc.\ 
such  that 
$E_{1,F_P} \cong E_{2,F_P}$ for all $P \in \cP$
and  $E_{1,F_U} \cong E_{2,F_U}$ for all  
connected components $U$ of $Y \setminus \cP$.
\end{sproposition}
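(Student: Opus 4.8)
The plan is to reduce the statement to a finiteness argument on models: the torsors $E_1,E_2$ are each defined over $F$ and admit a reduction to a finite étale subgroup scheme, hence each extends to a torsor over some normal model of $F$ (after possibly shrinking to a dense open and then taking a normal model where the reduction group is finite étale). The first step is therefore to choose a common regular model $\gQ$ (using Lipman's resolution of singularities, as invoked in the introduction) over which both $E_1$ and $E_2$ extend to $\widetilde G$-torsors that still admit reductions to the finite étale $B$-subgroups $S_1,S_2$, at least over the generic points of the irreducible components of the closed fiber $Y$.

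Next I would exploit that for torsors under finite étale group schemes of order prime to $p$, isomorphism over a completion $F_v$ at a divisorial valuation is detected by unramifiedness plus agreement of the residual classes. More precisely: for each codimension-one point $y$ of $\gQ$ lying on the closed fiber, the local ring $\cO_{\gQ,y}$ is a DVR whose completion has fraction field of the form $F_v$ for a divisorial valuation $v$; the hypothesis $E_{1,F_v}\cong E_{2,F_v}$ then forces the two $S_i$-torsors (or rather their images, after a twisting bijection identifying $^{E_1}\widetilde G$ and $^{E_2}\widetilde G$ as in Lemma \ref{lem_pair}) to become isomorphic over $F_v$, hence over the henselization, hence the obstruction to $E_{1,\cO_{\gQ,y}}\cong E_{2,\cO_{\gQ,y}}$ — which lies in a group controlled by $\pi_1$ of the residue field and is prime to $p$ — vanishes. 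Spreading this out, the locus where $E_1\cong E_2$ is open and contains every codimension-one point of $Y$; its complement in $Y$ is therefore a finite set of closed points.

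Now I would \emph{define} $\cP$ to be that finite exceptional set (enlarged, as Setting \ref{setting_hhk} requires, to contain all crossing points of components of $Y$ and at least one point of each component), let $\cU$ be the connected components of $Y\setminus\cP$ and $\cB$ the branches at points of $\cP$. By construction $E_1\cong E_2$ over $\cO_{\gQ,\eta}$ for $\eta$ any point of $Y\setminus\cP$; I want to upgrade this to an isomorphism over $F_U$ for each $U\in\cU$ and over $F_P$ for each $P\in\cP$. For the $F_U$ statement, $\widehat R_U$ is a (two-dimensional) normal excellent domain which is a "nice" ring in the patching sense, and one checks that an isomorphism of $\widetilde G$-torsors over the semilocal ring $R_U$ at all codimension-one points extends: here I would use that $R_U$ (equivalently $\widehat R_U$) is obtained by intersecting the $R_P$ for $P\in U$, that torsors under finite étale prime-to-$p$ groups over such a ring are controlled by a tamely ramified fundamental group, and a purity/Zariski–Nagata argument for the extension of the isomorphism from codimension one. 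For the $F_P$ statement one similarly works over the complete local ring $\widehat R_P$: the closed point of $\Spec\widehat R_P$ is the only codimension $\geq 2$ point, and the agreement at the two branches at $P$ (together with a Grothendieck existence / descent argument for the torsors over the completion) glues to an isomorphism over $\widehat R_P$ and hence over $F_P$.

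The main obstacle I expect is the last part: moving from "$E_1\cong E_2$ at all codimension-one points of $\gQ$" to "$E_1\cong E_2$ over $F_U$ and $F_P$" for the finitely many bad points. This requires a genuine purity statement for isomorphisms of torsors under the twisted forms $^{E_i}\widetilde G$ over the two-dimensional regular (or merely normal) local and semilocal rings $\widehat R_P$, $\widehat R_U$ — not just for the torsors themselves but for morphisms between them — and one has to be careful that the relevant group ($^{E_1}\widetilde G$, or the finite group through which the difference class factors) is smooth with connected-or-finite-étale structure so that Zariski–Nagata purity and the analysis of the prime-to-$p$ tame fundamental group apply. A secondary technical point is ensuring the initial model $\gQ$ can be chosen regular \emph{and} so that both reductions to $S_1,S_2$ (finite étale over $B$) survive; this is where one pays the price of passing to a sufficiently fine model in the inductive system $\limind_\gX$, which is exactly the flexibility afforded by Lipman resolution.
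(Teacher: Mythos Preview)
Your initial setup---extending the $S_i$-reductions to torsors on a regular model minus a strict normal crossing divisor, then arguing point by point on the special fiber---agrees with the paper. The genuine gap is at the closed points $P$ of $Y$.

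The purity route you sketch cannot close that gap. Zariski--Nagata purity extends finite \'etale covers across the closed point of $\Spec\widehat R_P$, but what you must produce is an $F_P$-point of $\Isom_{\widetilde G}(E_1,E_2)$, a torsor under the \emph{reductive} group $^{E_1}\!\widetilde G$; no purity statement manufactures rational points of such a torsor from points over the branches or over the punctured spectrum. Your spreading-out step is also suspect: an isomorphism over the henselization $\cO_{\gQ,y}^h$ does not descend to the Zariski local ring $\cO_{\gQ,y}$, whose fraction field is $F$ itself---if it did, you would be proving $E_1\cong E_2$ over $F$, which is false whenever the relevant $\Sha$ is nontrivial.

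The paper's mechanism at a closed point $P$ is entirely different. It invokes the divisorial valuation $v$ on $F$ given by the exceptional divisor of the \emph{blow-up} of $\Spec\widehat R_P$ at its closed point---a valuation invisible on the chosen model. The hypothesis then supplies $E_1\cong E_2$ over $F_{P,v}$. The decisive input is the injectivity theorem of \cite[Thm.~6.9]{G6}: with $A_D$ the localization of $\widehat R_P$ along the snc divisor $D$, the map $H^1_{loop}(A_D,\widetilde G)\to H^1(F_{P,v},\widetilde G)$ is injective. Since $S_i$ is finite \'etale of order prime to $p$, every $S_i$-torsor over $A_D$ is a loop torsor, hence so are the $E_{i,A_D}$; injectivity then yields $E_1\cong E_2$ over $A_D$, in particular over $F_P$. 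This blow-up valuation together with the loop-torsor injectivity are precisely the ingredients missing from your proposal.
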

 
\begin{proof}
We consider the $F$-scheme 
$Z= \Isom_{\widetilde G}(E_1,E_2)$.
Our assumption is that $Z(F_v) \not = \emptyset$
for all divisorial discrete valuations $v$ on $F$.

Let $V_i$ be an $S_{i,F}$--torsor 
such that $V_i \wedge^{S_{i,F}} \widetilde G \cong E_i$
for $i=1,2$.
 According to \cite[lemma 5.1]{GP} 
 there exists  a regular proper model $\gX$ of $X$ and 
  a strict normal crossing divisor  $D$ containing  the irreducible components of $Y$ such that $V_i$ extends 
  to an $\gX \setminus D$--torsor $\gV_i$  under $S_i$.
  We are given a closed point $P \in Y$. If $y$ is of codimension one, by hypothesis, $Z(F_y)$ is not empty. We therefore look at a closed point $P \in Y$.
 We  pick a height one prime $\cp$ in $\widehat R_P$ that contains  $t$. It  defines a branch of $Y$ at $P$ lying 
 on some irreducible component $Y_1$ of $Y$.

We consider the regular local ring $A=\widehat R_P$ and denote by $A_D$
its localization at $D$. 
Since $S_{i,B}$ is finite \'etale of degree prime 
to $p$, $H^1(A_D,S_i)$ consists in loop torsors as defined in \cite[\S 2.3, lemma 2.3.(2)]{G5},  i.e.\ those arising from cocycles related to tame Galois covers of $A_D$. It follows that the $A_D$--torsors $\gV_1$, $\gV_2$ under $\widetilde G$ are loop ({\it ibid}, lemma 5.3.(3)).

Let $F_{P,v}$ be the completion of the field $F_P$ for the valuation associated to the blow-up of $\Spec(A)$ at its closed point.
Our assumption states in particular that $Z(F_{P,v}) \not = \emptyset$.
According to \cite[Thm.\ 6.9]{G6},  the map
$$
H^1_{loop}(A_D, \widetilde G) \to H^1(F_{P,v}, \widetilde G)
$$
is injective. It follows that 
$\Isom_{\widetilde G_{A_D}}\bigl( \gV_{1},\gV_{2}     \bigr)(A_D) \not = \emptyset$.
A fortiori $Z(F_P) \not = \emptyset$.
 \end{proof}

 \begin{sproposition} \label{prop_main} Let $G$ be a reductive $F$--algebraic group
 and let $G_0$ be the underlying Chevalley 
 reductive $\ZZ$--group scheme. Let $E$ be a $G$--torsor
 and assume that the $\Aut(G_0)$-torsor
 $\uIsom(G_0,G)$ admits  a reduction to  a finite \'etale 
$B$--subgroup of $ \Aut(G_0)$ of prime to $p$ order
and that the 
  $G_0 \rtimes \Aut(G_0)$-torsors $(G,G)$ (resp.\
 $(E,G)$) admits a reduction to a finite \'etale 
$B$--subgroup of $G_0 \rtimes \Aut(G_0)$ of prime to $p$ order. 
Then the two following assertions are equivalent:

 \smallskip
 
 \noindent (i) There  exists a regular proper model $\gX$ of $X$ 
with special fiber $Y=\gX_k$ such that for every point
$y \in Y$, then $E(F_y) \not = \emptyset$.

 \smallskip
 
 \noindent (ii) For each divisorial discrete valuation $v$
 on $F$, we have  $E(F_v) \not = \emptyset$.
 
 \end{sproposition}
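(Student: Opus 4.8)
The plan is to prove Proposition \ref{prop_main} by reducing it to the earlier Proposition \ref{prop_key} applied to the pair $(E,G)$ together with the trivial pair $(G,G)$, both viewed as torsors under $\widetilde G := G_0 \rtimes \Aut(G_0)$ via the description in Section 2 (Lemma \ref{lem_pair}). The implication (i) $\Rightarrow$ (ii) is essentially formal: any divisorial valuation $v$ on $F$ comes from some regular projective model, and by taking a common refinement (a regular model dominating both) one arranges that $v$ corresponds to a codimension-one point of the special fiber $Y$ of $\gX$, for which hypothesis (i) directly gives $E(F_y) \neq \emptyset$; one must check that $F_y$ here coincides with (or embeds compatibly into) the completion $F_v$, which is standard for codimension-one points of the closed fiber.

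For the substantive direction (ii) $\Rightarrow$ (i), first I would set $\widetilde G = G_0 \rtimes \Aut(G_0)$, which is smooth over $B$ (indeed over $\ZZ$) and fits into the exact sequence $1 \to G_0 \to \widetilde G \to \Aut(G_0) \to 1$ with $\Aut(G_0)$ twisted constant over $B$, so the hypotheses of Proposition \ref{prop_key} are met. The class in $H^1(F,\widetilde G)$ attached to the pair $(E,G)$ and the class attached to $(G,G)$ both admit, by assumption, a reduction to a finite \'etale $B$-subgroup of $\widetilde G$ of prime-to-$p$ order; these are the $S_1, S_2$ of Proposition \ref{prop_key}. The hypothesis (ii) says $E(F_v) \neq \emptyset$ for all divisorial $v$, which translates (since $E$ is a $G$-torsor and the pair $(E,G)$ corresponds under $u_*$ to $({}^EG, G)$, so that $E(F_v)\neq\emptyset$ iff $({}^EG,G)_{F_v} \cong (G,G)_{F_v}$, iff the two $\widetilde G$-torsors become isomorphic over $F_v$) into exactly the hypothesis "$E_{1,F_v} \cong E_{2,F_v}$ for all divisorial $v$" of Proposition \ref{prop_key}.

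Proposition \ref{prop_key} then produces a regular model $\gQ$ of $F$, with closed fiber $Y$, finite set $\cP$, components $\cU$ of $Y\setminus\cP$, such that the two $\widetilde G$-torsors agree over every $F_P$ ($P \in \cP$) and every $F_U$ ($U \in \cU$). Unwinding: over each such field the pair $(E,G)$ is isomorphic to $(G,G)$, hence $E(F_P) \neq \emptyset$ and $E(F_U) \neq \emptyset$. It remains to upgrade this from "all $P \in \cP$ and all $U \in \cU$" to "all $y \in Y$" as demanded in (i). For a closed point $P \notin \cP$, $P$ lies in some $U \in \cU$ and the local ring $R_P$ dominates $R_U$, so $F_P \supseteq F_U$ and $E(F_U)\neq\emptyset$ gives $E(F_P)\neq\emptyset$; for the generic point of a component, that component meets $\cP$ (by choice of $\cP$) — alternatively the generic point is a codimension-one point covered by hypothesis (ii) directly, exactly as in the proof of Proposition \ref{prop_key}. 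Thus $E(F_y)\neq\emptyset$ for every $y \in Y$, giving (i) with $\gX = \gQ$.

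The main obstacle is the bookkeeping in the translation step: verifying carefully that the condition $E(F_v)\neq\emptyset$ on the $G$-torsor $E$ is genuinely equivalent to the isomorphism of the two associated $\widetilde G$-torsors over $F_v$, i.e.\ that the map $H^1(F_v,G) \to H^1(F_v,\widetilde G)$ from \eqref{eq_coh} has trivial kernel (which it does, as noted after \eqref{eq_coh}: $H^1(S,G') \to H^1(S,G'\rtimes\uAut(G'))$ has trivial kernel), and that this is compatible across all the local fields $F_v$, $F_P$, $F_U$ simultaneously with the functorial maps — together with confirming that $F_y$ for a codimension-one point $y$ of $Y$ is the completion for a divisorial valuation in $\Omega_{F,div}$, so that hypothesis (ii) indeed applies there. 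Once these identifications are in place, the proof is a direct invocation of Proposition \ref{prop_key}.
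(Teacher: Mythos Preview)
Your approach is the same as the paper's: apply Proposition \ref{prop_key} to $\widetilde G = G_0 \rtimes \Aut(G_0)$ with $E_1, E_2$ the $\widetilde G$-torsors attached to the pairs $(G,G)$ and $(E,G)$, and then unwind. There is one slip: in your sequence $1 \to G_0 \to \widetilde G \to \Aut(G_0) \to 1$ the quotient $\Aut(G_0)$ is \emph{not} twisted constant---its identity component is the adjoint group $G_{0,ad}$---so as written this does not meet the hypotheses of Proposition \ref{prop_key}. The correct presentation is $1 \to G_0 \rtimes G_{0,ad} \to \widetilde G \to \Out(G_0) \to 1$, with $G_0 \rtimes G_{0,ad}$ reductive and $\Out(G_0)$ constant (compare the proof of Theorem \ref{thm_main}). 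With this correction the rest of your argument goes through unchanged. The paper is terser on the remaining points: for (i) $\Rightarrow$ (ii) it simply cites \cite[Thm.\ 3.4]{HHKP}, and for the reduction hypothesis on $E_1 = \uIsom_\gP\bigl((G_0,G_0),(G,G)\bigr)$ it observes that this torsor is the pushforward of $\uIsom(G_0,G)$ along the splitting $s_0: \Aut(G_0) \to \Aut_\gP(G_0,G_0)$, rather than passing through $u_*$ as you do; your route via the trivial-kernel property noted after \eqref{eq_coh} is equivalent.
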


 \begin{proof}
The implication $(i) \Longrightarrow (ii)$ is a general fact,
see \cite[Thm.\ 3.4]{HHKP}. We prove then
 $(ii) \Longrightarrow (i)$  assuming that $E(F_v) \not = \emptyset$ for 
 each discrete divisorial valuation $v$ on $F$.
 
 We denote by $s_0: \Aut(G_0) \to \Aut_{\gP}(G_0,G_0)$
  the splitting $\Aut_{\gP}(G_0,G_0) \to \Aut(G_0)$
  and consider the $\Aut_{\gP}(G_0,G_0)$-torsor $E_1= \uIsom_{\gP}\bigl( (G_0,G_0),(G,G) \bigr)$ which is nothing but $s_{0,*}\bigl( \uIsom_{gr}(G_0,G)\bigr)$.
  Our assumption implies that admits $E_1$ 
admits a reduction to  a finite \'etale 
$B$--subgroup of $\Aut_{\gP}(G_0,G_0)$ of prime to $p$ order.

We shall apply Proposition  \ref{prop_key} to the $G_0  \rtimes \Aut(G_0)$--torsors 
$E_1= \uIsom_{\gP}\bigl( (G_0,G_0),(G,G) \bigr)$ and $E_2=
\uIsom_{\gP}\bigl( (G_0,G_0),(E,G) \bigr)$.

Assume that $E_{1,F_v} \cong E_{2,F_v}$ for any
divisorial discrete valuation $v$ on $F$.
Proposition  \ref{prop_key} provides 
 a regular model 
$\gQ$ of $F$ as above together with $Y, \cP$, etc.  
such  that 
$E_{1,F_P} \cong E_{2,F_P}$ for all $P \in \cP$
and  $E_{1,F_U} \cong E_{2,F_U}$ for all  
connected components $U$ of $Y \setminus \cP$.
 It follows that for every point
$y \in Y$, then $E(F_y) \not = \emptyset$.
 \end{proof}

 \begin{stheorem} \label{thm_main} 
 Let $G$ be a reductive
  $F$--algebraic group
 and let $G_0$ be the underlying Chevalley 
reductive $\ZZ$--group scheme. 
Assume that $p$ does not divide the order of 
the Weyl group $W_0$ of $G_0$ and that $G$ becomes an inner form of $G_0$
after a  Galois prime to p extension of $F$.

\smallskip

(1) We have $\Sha_{patch}(F,G) =\Sha_{div}(F,G)$;

\smallskip

(2) If $G$ is a retract rational $F$--variety, then 
$\Sha_{div}(F,G)=1$.
\end{stheorem}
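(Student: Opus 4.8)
The plan is to reduce Theorem \ref{thm_main} to the previously established Propositions \ref{prop_main} and \ref{prop_key}, once one arranges the required reductions to finite subgroups of prime-to-$p$ order. First, for part (1), fix a class $[E] \in \Sha_{div}(F,G)$, i.e.\ a $G$-torsor $E$ over $F$ with $E(F_v) \neq \emptyset$ for every divisorial valuation $v$; we must show $[E] \in \Sha_{patch}(F,G)$, that is, there is a regular proper model $\gX$ whose special fiber $Y$ has $E(F_y)\neq\emptyset$ for all $y \in Y$. This is precisely the content of the implication $(ii)\Rightarrow(i)$ in Proposition \ref{prop_main}, together with \cite[prop.\ 8.2]{HHK2} for the reverse inclusion $\Sha_{patch}(F,G)\subseteq\Sha_{div}(F,G)$. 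So the real task is to verify the hypotheses of Proposition \ref{prop_main}: that $\uIsom(G_0,G)$, the pair $(G,G)$, and the pair $(E,G)$ all admit reductions, over the relevant $B$-subgroups, to finite \'etale $B$-subgroups of prime-to-$p$ order.

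Here is where Section 3 enters. The hypothesis that $G$ becomes an inner form of $G_0$ after a Galois prime-to-$p$ extension says the class of $\uIsom_{gr}(G_0,G)$ in $H^1(F,\Aut(G_0))$ lands, after a prime-to-$p$ extension, in the image of $H^1(F,G_0^{\mathrm{ad}})$; in the language of Section 2, the relevant $J$ (the group of outer automorphisms acting) can be taken finite after such an extension. The strategy is to spread $G_0 \rtimes \Aut(G_0)$ over $B$ as in the exact sequence $1 \to G_0 \to G_0\rtimes\Aut(G_0) \to \Aut(G_0) \to 1$ — and likewise $1 \to G \to \uAut_\gP(E,G) \to \uAut(G) \to 1$ from Lemma \ref{lem_pair} — and then apply Theorem \ref{thm_cgr2}. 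The hypothesis $p \nmid |W_0|$ ensures that the order of the finite subgroups produced — which are built from $_{2n}T$ and $\widetilde W$, with $\widetilde W$ an extension of a finite $J$ by $W_0$ — is prime to $p$: indeed one applies Theorem \ref{thm_cgr2} at each prime $l \mid d(\widetilde G)|W_0|$ with $l \neq p$ and patches. The condition $D\,\Pic(X')\{l\}=0$ for finite \'etale covers of bounded degree holds because $F$, being the function field of a curve, and the local rings $R_U$, $\widehat R_U$ appearing in the patching setup, are (locally) of the required arithmetic type; over a field one takes $D=1$, and Theorem \ref{thm_cgr2}(3) lets one pass to a prime-to-$l$ field extension, which does not affect the local-global statement since a class trivial over a prime-to-$l$ extension and over all $F_v$ can be controlled by restriction-corestriction. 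Running over all $l$ and intersecting gives the desired reduction to a finite \'etale subgroup of prime-to-$p$ order, so Proposition \ref{prop_main} applies and (1) follows.

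For part (2), assume additionally that $G$ is a retract rational $F$-variety. Then by Harbater--Hartmann--Krashen \cite[Thm.\ 9.1]{HHK2}, extended to the retract rational case in \cite[Thm.\ 2.2.4]{K}, one has $\Sha_{patch}(F,G)=1$. Combining with part (1) immediately yields $\Sha_{div}(F,G)=\Sha_{patch}(F,G)=1$. This half is essentially formal once (1) is in hand.

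The main obstacle I expect is the bookkeeping in part (1): correctly formulating and verifying, prime by prime, that all three torsors ($\uIsom(G_0,G)$, the pair $(G,G)$ viewed as a $G_0\rtimes\Aut(G_0)$-torsor, and the pair $(E,G)$) simultaneously admit reductions to finite \'etale $B$-subgroups of prime-to-$p$ order, via Theorem \ref{thm_cgr2}, while keeping track of the degree of the auxiliary prime-to-$l$ covers and checking that these covers are harmless for the local-global conclusion. In particular one must be careful that the model $\gX$ over $B$ can be chosen so that the spreading-out of $G_0\rtimes\Aut(G_0)$ and of $\uAut_\gP(E,G)$ to smooth $B$-group schemes with twisted-constant quotient $J$ is valid, and that Theorem \ref{thm_cgr2}'s hypothesis on $l$-torsion of Picard groups of bounded-degree \'etale covers is met over $F$ and over the patching rings $R_U,\widehat R_U,\widehat R_P$; this is where the excellence and arithmetic hypotheses on $B$ and $F$ are genuinely used. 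Once these reductions are secured, the rest is the invocation of Proposition \ref{prop_main}.
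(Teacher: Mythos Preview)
Your proposal has a genuine gap in part~(1). You invoke Theorem~\ref{thm_cgr2}, which only produces a reduction of the torsor to a finite $l$-power subgroup \emph{after passing to a finite cover of degree prime to $l$}, and then you propose to ``run over all $l$ and intersect'' while controlling the passage to extensions by restriction--corestriction. This does not work: restriction--corestriction is unavailable for non-abelian $H^1$, and there is no mechanism to combine reductions obtained over different field extensions $F'_l/F$ (one for each prime $l$) into a single reduction over $F$ itself. Proposition~\ref{prop_main} requires the reductions to finite \'etale $B$-subgroups of prime-to-$p$ order to exist over $F$, not over auxiliary extensions.

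The paper's argument is both simpler and different in kind. The hypothesis that $G$ becomes an inner form after a Galois prime-to-$p$ extension is \emph{not} used to pass to that extension; rather, it says the image $\Gamma$ of $\Gal(F_s/F)$ in $\Out(G_0)(\ZZ)$ is a finite group of prime-to-$p$ order. One then replaces $\Aut(G_0)$ by the $\ZZ$-subgroup $\Aut_\Gamma(G_0)=\Aut(G_0)\times_{\Out(G_0)}\Gamma$, so that the class $[G]$ (and similarly $[(E,G)]$) lifts to $H^1(F,\Aut_\Gamma(G_0))$. Now one applies \cite[Thm.\ 1.2]{CGR} (or equivalently Theorem~\ref{thm_cgr1}(2) over the field $F$, which is an LG ring with trivial Picard group) to obtain a single finite $\ZZ$-subgroup $S\subset\Aut_\Gamma(G_0)$, fitting in $1\to{_n T_0}\to S\to \Aut_\Gamma(G_0,T_0)/T_0\to 1$, such that $H^1(F,S)\to H^1(F,\Aut_\Gamma(G_0))$ is surjective. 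The quotient $\Aut_\Gamma(G_0,T_0)/T_0$ sits in $1\to W_0\to\Aut_\Gamma(G_0,T_0)/T_0\to\Gamma\to 1$, so it has prime-to-$p$ order by both hypotheses, and $n$ divides a power of this order; hence $S_B$ is finite \'etale of prime-to-$p$ order over $B$. No field extension, no prime-by-prime argument, and no Picard condition over patching rings is needed (the CGR reduction happens over $F$; the patching rings only enter inside Propositions~\ref{prop_key} and~\ref{prop_main}). Your treatment of part~(2) is correct.
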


\begin{proof}
(1) We know that  $\Sha_{patch}(F,G) \subset  \Sha_{div}(F,G)$.
To establish the reverse inclusion, we
are given a $G$--torsor $E$ such that $E(F_v) \not = \emptyset$ for each divisorial discrete valuation $v$.
We need to check the conditions for applying Proposition \ref{prop_main}. 
We start with the isomorphism class $[G] \in H^1(F,\Aut(G_0))$
by using the sequence $1 \to G_{0,ad} \to \Aut(G_0) \to \Out(G_0) \to 1$
where $\Out(G_0)$ is a constant $\ZZ$--group scheme.
Since $H^1(F,\Out(G_0))= \Hom_{ct}\bigl( \Gal(F_s/F), \Out(G_0)(\ZZ) \bigr)$
modulo  $\Out(G_0)(\ZZ)$-conjugacy, the image of $[G]$ is represented by 
a continuous homomorphism $u : \Gal(F_s/F) \to  \Out(G_0)(\ZZ)$
whose kernel defines a Galois extension $F'/F$ 
which is the minimal subextension of $F_s$
making $G$ an inner form. Our assumption implies
that $\Gal(F'/F)$ is of prime to $p$ degree
and so  that $\Gamma=u(\Gal(F_s/F) ) $ is a finite subgroup
of $\Out(G_0)(\ZZ)$ of order prime to $p$. We denote by  $\Aut_{\Gamma}(G_0)=  \Aut(G_0) \times_{\Out(G_0)} \Gamma$, we have an exact 
sequence of $\ZZ$--group schemes
\begin{equation}\label{eq_Gamma}
1 \to G_{0,ad} \to \Aut_{\Gamma}(G_0) \to \Gamma_\ZZ \to 1.
\end{equation}
Further the isomorphism class  $[G]$
arises from a cohomology class $\gamma \in H^1(F,\Aut_\Gamma(G_0))$.
According to \cite[Thm.\ 1.2]{CGR} applied to the ring $\ZZ$,
there exists a finite $\ZZ$--subgroup $S$ of 
$\Aut_\Gamma(G_0)$ such that the map 
$H^1(F,S) \to H^1(F,\Aut_\Gamma(G_0))$ is onto.
Let $T_0\subset G_0$ be a maximal split $\ZZ$--subtorus
and consider the  $\ZZ$--group  scheme
$\Aut_\Gamma(G_0,T_0)$ of $\Aut_\Gamma(G_0)$ as defined 
in \cite[\S XXIV.3]{SGA3}. 
For each ring $C$, we have 
$$
\Aut_\Gamma(G_0,T_0)(C) = \bigl\{ f \in \Aut_\Gamma(G_0)(C) \, \mid \,
f(T_{0,C})=T_{0,C}  \bigr\}.
$$
Further by the same  statement, we can take $S \subset 
\Aut_\Gamma(G_0,T_0)$  fitting in an exact sequence $1 \to {_n(T_0)} \to S \to 
\Aut_\Gamma(G_0,T_0)/T_0 \to 1$ where $n$ divides the 
square of the order of the  finite group $\Aut_\Gamma(G_0,T_0)/T_0$.
We have a sequence
$1 \to W_0 \to \Aut_\Gamma(G_0,T_0)/T_0 \to \Gamma \to 1$
so that $\Aut_\Gamma(G_0,T_0)/T_0$ is finite constant of
 prime to $p$ order. It implies that  ${_n(T_0)}$ is 
 a finite diagonalisable $\ZZ$--group scheme
 whose base change to $B$ is \'etale.  
  Altogether  
 $S_B$ is a finite \'etale $B$-subgroup of $\Aut_\Gamma(G_0)_B$
 of prime to $p$ order such that $\gamma$ admits a reduction 
 to $S_B$. A fortiori, $[G] \in H^1(F,\Aut(G_0))$
 admits a reduction to $S_B$.
 
 The same method works as well for the 
 class $[(E,G)] \in H^1( F, G_0 \rtimes \Aut(G_0))$
 by using the exact sequence
 $1 \to G_0 \rtimes_\ZZ G_{0,ad}  \to G_0 \rtimes_\ZZ \Aut(G_0)
 \to \Out(G_0) \to 1$ and provides a finite \'etale 
$B$-subgroup of $S'$ of  $(G_0 \rtimes_\ZZ \Aut(G_0))_B$
of prime  to $p$-order such that the $G_0 \rtimes_\ZZ \Aut(G_0)$-torsor
over $\Spec(F)$  arises from an $S'$--torsor.

 Proposition \ref{prop_main}, $(ii) \Longrightarrow (i)$, applies and shows that  there  exists a regular proper model $\gX$ of $X$ 
with special fiber $Y=\gX_k$ such that for every point
$y \in Y$, then $E(F_y) \not = \emptyset$.
In other words $[E]$ belongs in $\Sha_{\gX}(F,G)$
and a fortiori to $\Sha_{patch}(F,G)$.

\smallskip

\noindent (2) If $G$ is a retract rational
$F$--variety, we have that $\Sha_{patch}(F,G)=1$  \cite[Thm.\ 9.1]{HHK2}
(see \cite[Thm.\ 2.2.4]{K} for the extension to the retract 
rational case).
The first part of the statement enables us to 
conclude that $\Sha_{div}(F,G)=1$ in this case.
\end{proof}

 \begin{sremarks}{\rm 
(a) If $G$ is a torus, the condition is that 
the minimal splitting extension of $T$ is of 
prime to $p$ degree.  Note that for norm one  tori
with respect to a prime to $p$ Galois extension,  the main theorem
had been established by Sumit Chandra Mishra \cite[Thm.\ 5.1]{M}.

\smallskip
 
 \noindent (b) The assumptions on  $p$ are not sharp.
 For example, if $G$ is semisimple of type $G_2$,
 the statement excludes the primes $2$ and $3$ but excluding $2$
 is enough. The point is that we can deal with the subgroup
 $\mu_2 \times \mu_2 \times \ZZ/2\ZZ \subset S$ in the proof.

 Similarly for $n \geq 1$ such that $(p, n!)=1$,
 Theorem \ref{thm_main}.(2) shows that
 $\Sha_{div}(F, \PGL_n)=1$.
 This will be strenghthened in Corollary \ref{cor_pgl}.

 }
 \end{sremarks}

 \begin{scorollary} \label{cor_main_qs} 
 Let  $G$ be a semisimple quasi-split $F$--group
 which is quasi-split by a Galois prime to $p$ field extension. 
 Assume that $p$   does not divide the 
 order of $W_0$. In each case
 
\sm

(i) $G$ is split;

\sm

(ii) $G$ is simply connected
 or adjoint;

\sm 
 
 \noindent  we have $\Sha_{div}(F,G)=1$.
 \end{scorollary}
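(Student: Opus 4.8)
The plan is to deduce Corollary \ref{cor_main_qs} directly from Theorem \ref{thm_main} by checking, in each of the listed cases, that the hypotheses of the theorem are met and that $G$ is a retract rational $F$--variety. First I would observe that the assumption on $p$ in the corollary (that $p\nmid |W_0|$) is exactly the assumption of Theorem \ref{thm_main}; moreover, a quasi-split group $G$ that is quasi-split by a Galois prime to $p$ extension becomes an inner form of its Chevalley form $G_0$ after that same extension (the quasi-split inner form of $G_0$ is $G_0$ itself, and passing to the splitting field of the $\ast$-action kills the outer part), so the second hypothesis of Theorem \ref{thm_main} holds as well. Hence part (1) of the theorem gives $\Sha_{patch}(F,G)=\Sha_{div}(F,G)$, and it remains to prove $\Sha_{div}(F,G)=1$, for which by Theorem \ref{thm_main}.(2) it suffices to show $G$ is retract rational over $F$.

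For the rationality input I would argue case by case. In case (i), $G$ split means $G$ is an $F$--split reductive group, hence an open subscheme of affine space by the big cell decomposition (Bruhat), so $G$ is a rational $F$--variety; this is the strongest case and is immediate. In case (ii), for $G$ simply connected or adjoint and quasi-split, one uses that a quasi-split simply connected (resp.\ adjoint) semisimple group over a field is a rational variety: this is classical, going back to the structure of quasi-split groups via restriction of scalars along the splitting field combined with Bruhat decomposition of the quasi-split form, so again $G$ is $F$--rational and a fortiori retract rational. I would cite the relevant references for rationality of quasi-split semisimple groups (e.g.\ Borel--Tits, or Voskresenski{\u\i}/Chernousov--Merkurjev type results) rather than reprove it.

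The main (and essentially only) obstacle is the rationality claim in case (ii): one must be careful that quasi-split does not by itself give rationality for arbitrary isogeny types, which is why the hypothesis is restricted to the simply connected or adjoint case. For the simply connected quasi-split case, rationality follows because such a group is, up to the unipotent radical of a Borel and a maximal torus, built from Weil restrictions of split simply connected groups along separable extensions, and the torus appearing (a maximal torus of a quasi-split simply connected group) is an induced, hence rational, torus; the big cell then exhibits $G$ as rational. The adjoint quasi-split case is handled dually (or via the fact that the quasi-split adjoint group is the automorphism-group form and its maximal tori are again quotients of induced tori, hence rational). Once rationality is in hand in all three cases, the conclusion $\Sha_{div}(F,G)=1$ is a formal consequence of Theorem \ref{thm_main}, and the proof is complete.
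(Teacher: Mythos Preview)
Your proposal is correct and follows exactly the paper's strategy: check the hypotheses of Theorem \ref{thm_main} and apply part (2) once one knows that $G$ is a rational $F$--variety in each of the listed cases; the paper's own proof is a single sentence to this effect. One small caution: your parenthetical justification in the adjoint case (``maximal tori are quotients of induced tori, hence rational'') is not valid as a general principle, so there you should indeed rely on the cited rationality results for quasi-split adjoint groups rather than on that sketch.
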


\begin{proof} Such an $F$--group is a rational $K$--variety
so the result follows from Theorem \ref{thm_main}.
\end{proof} 
 
 Corollary \ref{cor_main_qs2} of section \ref{sect_degree_one} will refine the statement  for groups without factor of type $E_8$.
 This applies as well to many other groups in view of the rationality  results of Chernousov-Platonov \cite{CP}
 and others. This is the case for example for an isotropic
 $F$-group of type $F_4$. 
 
 \medskip

 \section{Other Tate-Schafarevich sets} 
 \subsection{Various sets of valuations}
Given an affine algebraic $F$--group $G$, we can also 
study the  local-global principle for $G$-torsors
with respect to the following set of completions of $F$:

\sm

(1) Completions associated to the set $\Omega_F$ of 
of all non-trivial $B$-valuations on $F$ (i.e.,\ those whose valuation ring contains $T$).

\sm

(2) Completions associated to the set $\Omega^1_F$ of 
all non-trivial rank one $B$-valuations on $F$.

\sm

(3) Completions associated to the set $\Omega^1_{F,dvr}$ of 
all non-trivial rank one discrete  $B$-valuations on $F$.

\sm

We have $\Omega_{F,div} \subset 
\Omega^1_{F,dvr}  \subset \Omega^1_{F}  \subset \Omega_F$.
This leads to three  Tate-Shafarevich sets as for example
$\Sha_{\Omega^1_{dvr}}(F,G)=
\ker \Bigl( H^1(F,G) \to \prod_{v \in \Omega^1_{dvr}} 
H^1(F_v,G)\Bigr)$.
We have the following inclusions
$$
\Sha_{patch}(F,G) \subset \Sha_{\Omega_F}(F,G) \subset
 \Sha_{\Omega^1_F}(F,G)  \subset
\Sha_{\Omega^1_{dvr}}(F,G) \subset
\Sha_{\Omega^1_{div}}(F,G).
$$
\sm
According to \cite[Thm.\ 3.4]{HHKP}, we have 
$\Sha_{patch}(F,G) = \Sha_{\Omega^1_F}(F,G)=\Sha_{\Omega_F}(F,G)$.
The fact that $\Sha_{\Omega_F}(F,G) = \Sha_{\Omega^1_F}(F,G) = \Sha_{\Omega^1_{dvr}}(F,G)$ is known
in two following situations  \cite[Thm.\ 8.10, cases (i) and (iii)]{HHK2}:

\sm

(A) $G$ is a rational $F$--variety and the residue field $k$  of $T$ is 
algebraically closed of characteristic zero;

\sm

(B) $G$ is semisimple simply connected and the residue field $k$ of $T$ is 
algebraically closed of characteristic zero.
 
\sm

Coming back to the present paper,  Theorem \ref{thm_main} has then the following consequence.

\begin{scorollary} \label{cor_main}
Under the assumptions  of Theorem \ref{thm_main}, we have then
$$
\Sha_{patch}(F,G) =\Sha_{\Omega_F}(F,G) = \Sha_{\Omega^1_F}(F,G)  = 
\Sha_{\Omega^1_{dvr}}(F,G) = \Sha_{\Omega^1_{div}}(F,G)
$$
and those sets are trivial when $G$ is a retract  $F$-rational variety.
 \end{scorollary}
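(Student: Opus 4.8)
The plan is a squeeze argument built on the chain of inclusions displayed just above the statement, together with the two equalities already recorded in the paper. First I would invoke \cite[Thm.\ 3.4]{HHKP}, recalled in the discussion above, which gives $\Sha_{patch}(F,G) = \Sha_{\Omega_F}(F,G) = \Sha_{\Omega^1_F}(F,G)$ for an arbitrary affine algebraic $F$--group $G$ (no hypothesis on $p$ is needed here). Next I would apply Theorem \ref{thm_main}.(1), which under the assumptions in force gives $\Sha_{patch}(F,G) = \Sha_{div}(F,G)$; since the divisorial valuations of $F$ are exactly the rank-one discrete $B$--valuations arising from the regular projective $B$--models of $X$, one has $\Omega_{F,div} = \Omega^1_{F,div}$, hence $\Sha_{div}(F,G) = \Sha_{\Omega^1_{div}}(F,G)$.

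With these two blocks of equalities in hand, the only remaining set $\Sha_{\Omega^1_{dvr}}(F,G)$ is caught, by the displayed chain $\Sha_{\Omega^1_F}(F,G) \subset \Sha_{\Omega^1_{dvr}}(F,G) \subset \Sha_{\Omega^1_{div}}(F,G)$, between two sets that have just been shown to coincide; therefore all five Tate--Shafarevich sets are equal, which is the first assertion. For the last sentence, if $G$ is a retract $F$--rational variety then Theorem \ref{thm_main}.(2) yields $\Sha_{div}(F,G)=1$, and by the string of equalities just established each of the five sets is trivial.

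The argument is purely formal once Theorem \ref{thm_main} is available, so I do not expect a genuine obstacle; the only point deserving a line of care is the bookkeeping identification $\Omega_{F,div} = \Omega^1_{F,div}$ (equivalently, that each divisorial valuation is rank-one discrete, and conversely that each rank-one discrete $B$--valuation attached to a model in the relevant family is divisorial), which is immediate from the definitions recalled in Section \ref{subsec_setting}.
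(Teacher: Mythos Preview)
Your argument is correct and is precisely the squeeze the paper has in mind: the corollary is stated without proof because it follows immediately from the chain of inclusions, the equality $\Sha_{patch}=\Sha_{\Omega_F}=\Sha_{\Omega^1_F}$ from \cite[Thm.\ 3.4]{HHKP}, and Theorem~\ref{thm_main}. The only cosmetic point is that $\Sha_{\Omega^1_{div}}(F,G)$ is simply another name for the $\Sha_{div}(F,G)$ of the introduction (divisorial valuations are by construction rank-one discrete), so no separate ``bookkeeping identification'' is really needed.
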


 \subsection{The constant case.}
 This is the case when we deal with a reductive $B$--group scheme $G$.
It  was  proven  in \cite[Thm.\ 4.2]{CTPS} 
(where only divisorial valuations are used in the proof)
that  $\Sha_{patch}(F,G) = \Sha_{\Omega^1_{dvr}}(F,G)$. 
Note that 
there is no need of any condition on the characteristic exponent $p$ of the residue field $k$. Other references are \cite[Thm.\ 8.10, case (ii)]{HHK2}
  and  \cite[Thm.\ 3.2]{CTOHHKPS2})
 This applies in particular when $G_F$ is a retract rational $F$--variety;
 since  $\Sha_{patch}(F,G)=1$, we have that $\Sha_{\Omega^1_{div}}(F,G)=1$ in
 characteristic free. The interest of our result is then 
 mostly for the non-constant case.

 \section{Variant with $0$-cycles of degree one}\label{sect_degree_one}
 
 The goal is  to replace rational points by $0$-cycles in order to
 improve Theorem \ref{thm_main}.(2).
 
 \subsection{Serre's injectivity question}\label{subsec_serre}
 For a semisimple (absolutely $k$--simple) group $G$ over a field $k$, Serre asked whether the 
 map
 $$
 H^1(k,G) \to \prod_{i=1}^c H^1(k_i,G)
 $$
 is injective where the $k_i$'s are finite field extensions of $k$
 such that the $g.c.d.$ of the $[k_i:k]$'s is prime to the torsion primes of $G$
 \cite[\S 2.4]{Se}. We can also ask the same question for 
 $G$ reductive assuming that the $g.c.d.$ of the $[k_i:k]$'s is $1$;
 note that the two questions are equivalent in the semisimple case
 in view of Lemma \ref{lem_torsion}.(2).
 If the above map is injective (resp.\ has trivial kernel),
 we will say that Serre's injectivity question (resp.\  Serre's 0-cycle question)
 has a positive answer.
Partial results are the following.

\smallskip

\begin{enumerate}
\item \label{serre1} The above questions reduce to the characteristic zero case
by using the lifting method \cite[proof of Thm.\ 3.4.1]{G4}.

 \item \label{serre2} The answer of Serre's 0-cycle question  is positive for
simply connected and adjoint classical groups 
and special orthogonal groups as established by Black \cite[Thm.\ 0.2, prop.\ 3.6]{Bl}
and for semisimple classical groups  involving types $A$, $B$, $C$ 
 (Bhaskhar, \cite{Bh}).

\item \label{serre3}  The answer of Serre's 0-cycle question  is positive
for quasi-split semisimple  groups without factors of type $E_8$.
This involves case by case results on other exceptional groups, see \cite[\S 5]{Bl},
\cite[Thm.\ 1.3]{Bh} and \cite[Cor.\ 3.4.5]{G4}. 
 
 \end{enumerate}

 \subsection{A local variant}

 \begin{sproposition} \label{prop_variant} 
 Let $G$ be a reductive
  $F$--algebraic group. Let $l$ be a prime, $l \not =p$.

\smallskip

(1) Let $\gamma \in \Sha_{\Omega^1_{div}}(F,G)$. Then there exists 
a finite extension $L/F$ of degree prime to $l$
such that   $\gamma_L \in \Sha_{patch}(L,G)$.

\smallskip

(2) Furthermore if $\Sha_{patch}(L,G)=1$, then 
$\gamma_L=1$.
\end{sproposition}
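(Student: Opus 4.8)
The plan is to reduce Serre's 0-cycle question for $G$ over $F$ to the already-established patching triviality, via a prime-to-$l$ base change that kills the local obstructions at all the patching places simultaneously.

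First I would treat part (1). Given $\gamma \in \Sha_{\Omega^1_{div}}(F,G)$, I want to produce a single finite extension $L/F$ of degree prime to $l$ such that $\gamma_L$ lies in $\Sha_{patch}(L,G)$. The natural approach is to invoke the $l$-localized reduction machinery of Theorem \ref{thm_cgr2}: working with a Chevalley model $G_0$ of $G$ and the group $\widetilde G = G_0 \rtimes \Aut(G_0)$ (or rather the $\Gamma$-version $\Aut_\Gamma(G_0)$ as in the proof of Theorem \ref{thm_main}), one reduces the class of the pair $(E,G)$ to the $l$-local finite group scheme $\widetilde S^{(l)}$ after a finite locally free cover of constant degree prime to $l$. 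Over a field $F$, Theorem \ref{thm_cgr2}.(3) upgrades this cover to a field extension $L/F$ of degree prime to $l$. Having arranged that $\gamma_L$ (and the relevant $\Aut$-torsor) admits a reduction to a prime-to-$p$ finite étale $B$-subgroup, I am in position to apply Proposition \ref{prop_main}: since $\gamma \in \Sha_{\Omega^1_{div}}(F,G)$, for every divisorial valuation $v$ of $F$ we have $E(F_v)\neq\emptyset$, and divisorial valuations of $L$ restrict to divisorial valuations of $F$ (or at worst the completions $L_w$ contain some $F_v$), so the hypothesis $(ii)$ of Proposition \ref{prop_main} holds over $L$. Hence $(i)$ holds over $L$, i.e.\ there is a regular proper model over which $E_L$ has points on every point of the special fiber, which says precisely $\gamma_L \in \Sha_{patch}(L,G)$.

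Part (2) is then immediate: if moreover $\Sha_{patch}(L,G)=1$, then from $\gamma_L \in \Sha_{patch}(L,G)$ we conclude $\gamma_L = 1$. Nothing further is needed here; the content is entirely in (1).

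The main obstacle I expect is the bookkeeping around the hypotheses of Propositions \ref{prop_key} and \ref{prop_main}: one must check that the reduction-to-$\widetilde S^{(l)}$ produced by Theorem \ref{thm_cgr2} really gives, over the field extension $L$, a reduction of both the $\Aut(G_0)$-torsor $\uIsom(G_0,G)$ and the $G_0\rtimes\Aut(G_0)$-torsors $(G,G)$ and $(E,G)$ to \emph{finite étale} $B$-subgroups of \emph{prime-to-$p$} order — the passage from the finite flat $\ZZ$-group scheme $\widetilde S^{(l)}$ to something étale over $B$ uses that $l\neq p$ and that the relevant $\mu_{l^b}$ becomes étale after base change to $B$, exactly as in the proof of Theorem \ref{thm_main}. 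The secondary point to be careful about is that divisorial valuations behave well under the finite extension $L/F$ — that $\Omega^1_{L,div}$ pulls back appropriately so that membership in $\Sha_{\Omega^1_{div}}(F,G)$ indeed forces the local conditions $E(L_v)\neq\emptyset$ over $L$ needed to feed Proposition \ref{prop_main}.
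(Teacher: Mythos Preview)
Your proposal is correct and follows essentially the same route as the paper's proof: pass to the $\Gamma$-version $\Aut_\Gamma(G_0)$ (so that $J$ is finite), apply Theorem \ref{thm_cgr2}.(3) over the field $F$ to obtain a prime-to-$l$ extension over which the relevant $\Aut(G_0)$- and $G_0\rtimes\Aut(G_0)$-torsors reduce to finite $\ZZ$-subgroups of $l$-power order (hence finite \'etale over $B$ of prime-to-$p$ order since $l\neq p$), and then invoke Proposition \ref{prop_main}. The paper does exactly this, performing the two reductions sequentially (for $\uIsom(G_0,G)$ and for $(E,G)$, the $(G,G)$ case being absorbed into the first via $s_{0,*}$) and replacing $F$ by the successive prime-to-$l$ extensions at each step; your two flagged ``obstacles'' are the only points requiring care and you have identified them correctly.
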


 \begin{proof} 
 (1) We are given a class $\gamma =[E] \in \Sha_{\Omega^1_{div}}(F,G)$. 
 The proof is quite similar to that of Theorem \ref{thm_main}
 by using the localized version of the technique of
 reduction to torsors to finite subgroup schemes (established 
by Theorem \ref{thm_cgr2}).
 
 We remind the reader that $G_0$ denotes the Chevalley form of $G$.
We start with the isomorphism class $[G] \in H^1(F,\Aut(G_0))$
by using the sequence \cite[XXIV.1.3]{SGA3}

\begin{equation}\label{eq_aut}
1 \to G_{0,ad} \to \Aut(G_0) \to \Out(G_0) \to 1
\end{equation}
where $\Out(G_0)$ is a constant $\ZZ$--group scheme.
Since $H^1(F,\Out(G_0))= \break \Hom_{ct}\bigl( \Gal(F_s/F), \Out(G_0)(\ZZ) \bigr)/\sim$, the image of $[G]$ is represented by 
a continuous homomorphism $u : \Gal(F_s/F) \to  \Out(G_0)(\ZZ)$
whose kernel defines a Galois extension $F'/F$ 
which is the minimal subextension of $F_s$
making $G$ an inner form. Our assumption implies
that $\Gal(F'/F)$ is of prime to $p$ degree
and so  that $\Gamma=u(\Gal(F_s/F) )$ is a finite subgroup
of $\Out(G_0)(\ZZ)$. 
We denote by  $\Aut_{\Gamma}(G_0)=  \Aut(G_0) \times_{\Out(G_0)} \Gamma$; by construction $[G]$
arises from a cohomology class $\beta \in H^1(F,\Aut_\Gamma(G_0))$.
According to Theorem \ref{thm_cgr2} applied to  $\ZZ$-group scheme $\widetilde G_0=\Aut_\Gamma(G_0)$
there exists a finite $\ZZ$--subgroup $S^\sharp$ of
$\Aut_\Gamma(G_0)$
 which is  finite free of index $l^c$ and such that there exists a finite  prime to $l$
field extension $F'/F$ such that $\beta_{F'}$ belongs to the
 image of $H^1(F',S^\sharp) \to H^1(F',\Aut_\Gamma(G_0))$.
 Once again, replacing $F$ by $F'$ permits to suppose 
 that  $\beta$ belongs to the
 image if $H^1(F,S^\sharp) \to H^1(F,\Aut_\Gamma(G_0))$.

 The same method works as well for the 
 class $[(E,G)] \in H^1( F, G_0 \rtimes \Aut(G_0))$
 by using the exact sequence
 $1 \to G_0 \rtimes_\ZZ G_{0,ad}  \to G_0 \rtimes_\ZZ \Aut(G_0)
 \to \Out(G_0) \to 1$ and provides a finite 
$B$--subgroup of $S^{\diamond}$ of  $(G_0 \rtimes_\ZZ \Aut(G_0))_B$
of $l$-power  such that the $G_0 \rtimes_\ZZ \Aut(G_0)$-torsor
over $\Spec(F)$  arises from an $S^\diamond$--torsor (again up to taking a further 
finite prime to $l$--extension of $F$).

 Proposition \ref{prop_main}, $(ii) \Longrightarrow (i)$, applies and shows that  there  exists a regular proper model $\gX$ of $X$ 
with special fiber $Y=\gX_k$ such that for every point
$y \in Y$, then $E(F_y) \not = \emptyset$.
In other words $[E]$ belongs in $\Sha_{\gX}(F,G)$
and a fortiori to $\Sha_{patch}(F,G)$.

\sm

\noindent (2)  This readily follows of (1).
 \end{proof}

\newpage
 
 \subsection{The $0$-cycle variant}

 \begin{stheorem} \label{thm_main_cycle} 
 Let $G$ be a reductive
  $F$--algebraic group
 and let $G_0$ be the underlying Chevalley  reductive $\ZZ$--group scheme.
 We assume that $\Sha_{patch}(L,G)=1$
 for all finite extensions $L$ of $F$.
Let $E$ be a $G$-torsor over $\Spec(F)$ such that 
$[E] \in \Sha_{\Omega^1_{div}}(F,G)$.

\smallskip

(1)  $E$ admits a $0$--cycle of degree $p^c$.

\smallskip

(2) Assume furthermore that $p$ is not  a torsion prime of  $DG_0$ and that $G$ becomes an inner form of $G_0$
after a  Galois prime to p extension of $F$. Then
$E$ admits a $0$--cycle of degree $1$.

\smallskip

(3) Assume
furthermore that Serre's 0-cycle question admits a positive
answer for $G$. Then the $G$--torsor $E$ is trivial.

\end{stheorem}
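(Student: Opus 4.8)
The plan is to prove the three parts in sequence, each reducing to the previous one, with Proposition \ref{prop_variant} as the main engine.

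For part (1), the idea is to run the argument of Proposition \ref{prop_variant} simultaneously over all primes $l \neq p$, or rather to iterate it. Fix a prime $l \neq p$. By Proposition \ref{prop_variant}.(1), there is a finite extension $L/F$ of degree prime to $l$ with $\gamma_L \in \Sha_{patch}(L,G)$; since $\Sha_{patch}(L,G)=1$ by hypothesis, in fact $\gamma_L = 1$, so $E(L) \neq \emptyset$ and $E$ acquires a $0$-cycle of degree $[L:F]$, which is prime to $l$. Doing this for every prime $l \neq p$ produces, for each such $l$, a $0$-cycle on $E$ of degree prime to $l$; taking the gcd of all these degrees together with any fixed one shows the gcd of all degrees of $0$-cycles on $E$ is a power of $p$. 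Hence $E$ admits a $0$-cycle of degree $p^c$ for some $c \geq 0$. The one point to be careful about is that $0$-cycles of degrees $d_1, d_2$ combine additively to give degree $\gcd(d_1,d_2)$ via Bézout (allowing negative coefficients, which is fine for $0$-cycles), so the set of achievable degrees is the subgroup of $\ZZ$ generated by all the $[L_i:F]$, and this subgroup is $p^c \ZZ$ for some $c$.

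For part (2), I would invoke the $p$-primary analogue of the reduction technique. Here the extra hypotheses are exactly those that let one control the $p$-part: $p$ not a torsion prime of $DG_0$ and $G$ an inner form of $G_0$ after a prime-to-$p$ Galois extension. Under these conditions the relevant finite subgroup scheme $S$ of $\Aut_\Gamma(G_0)$ (or of $G_0 \rtimes \Aut(G_0)$) produced as in the proof of Theorem \ref{thm_main} can be taken with $S_B$ finite \'etale of prime-to-$p$ order — this is precisely the situation of Proposition \ref{prop_main}. Applying Proposition \ref{prop_main}, $(ii) \Rightarrow (i)$, directly to $E$ over $F$ (no field extension needed), we get $[E] \in \Sha_{patch}(F,G) = 1$, so $E(F) \neq \emptyset$ and $E$ has a $0$-cycle of degree $1$. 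Alternatively, and perhaps more in the spirit of combining with (1): part (1) already gives a $0$-cycle of degree $p^c$; it remains to produce one of degree prime to $p$. That is where Theorem \ref{thm_main} under the stated hypotheses applies to show $\Sha_{div}(F,G) = \Sha_{patch}(F,G) = 1$ outright, so $E$ is actually trivial — but then part (2) is subsumed and one should instead read (2) as the genuinely weaker $0$-cycle statement obtained when only the torsion-prime condition (not full prime-to-$p$ inner form) is assumed; I would follow whichever hypothesis set the authors intend, using Theorem \ref{thm_main} when available and otherwise combining (1) with a prime-to-$p$ reduction.

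For part (3), the hypothesis is that Serre's $0$-cycle question has a positive answer for $G$, meaning the kernel of $H^1(F,G) \to \prod_i H^1(k_i,G)$ is trivial whenever the $[k_i:F]$ have gcd $1$. By part (2), $E$ has a $0$-cycle of degree $1$, i.e. there are finite extensions $L_i/F$ with $\gcd([L_i:F]) = 1$ and $E(L_i) \neq \emptyset$, so $[E]$ restricts to the trivial class in each $H^1(L_i,G)$. Then Serre's $0$-cycle question for $G$, applied with $k = F$ and $k_i = L_i$, forces $[E] = 1$, i.e. $E$ is trivial.

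The main obstacle I anticipate is part (2): getting from "$0$-cycle of degree a power of $p$" to "$0$-cycle of degree $1$" requires producing a prime-to-$p$ point (or trivializing extension), and the only tool for that is Proposition \ref{prop_main} / Theorem \ref{thm_main}, which demand the prime-to-$p$ reduction to a finite \'etale $B$-subgroup — this is exactly where the hypotheses on $p$ (torsion primes of $DG_0$, inner form after prime-to-$p$ extension) must be used, and one must check that these hypotheses suffice to make the subgroup scheme $S$ from the CGR reduction \'etale over $B$ with prime-to-$p$ order, handling the toral part $_n(T_0)$ as in the proof of Theorem \ref{thm_main}. Parts (1) and (3) are then formal consequences.
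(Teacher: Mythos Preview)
Your arguments for parts (1) and (3) are correct and match the paper's. The gap is in part (2).

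You propose to handle the prime $l=p$ by invoking Proposition \ref{prop_main} or Theorem \ref{thm_main}, relying on the claim that under the hypotheses of (2) the CGR subgroup $S$ can be taken finite \'etale over $B$ of prime-to-$p$ order. This claim is false: the proof of Theorem \ref{thm_main} obtains an \'etale prime-to-$p$ subgroup $S_B$ from the hypothesis that $p \nmid |W_0|$, which is strictly stronger than ``$p$ is not a torsion prime of $DG_0$''. For instance, for $G_2$ one has $|W_0|=12$ while the only torsion prime is $2$, so $p=3$ satisfies the hypothesis of (2) but not that of Theorem \ref{thm_main}; similarly $p=7$ for $E_8$. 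In these cases the toral part ${_n T_0}$ of $S$ has order divisible by $p$ and $S_B$ is not \'etale, so Proposition \ref{prop_main} does not apply. Your ``main obstacle'' is thus a genuine obstruction, not merely a check.

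The paper's argument for (2) takes a completely different route and, crucially, does \emph{not} use the local-global machinery at all for the prime $p$. After (1) it suffices to find a prime-to-$p$ extension trivializing $E$. The paper first replaces $F$ by a prime-to-$p$ extension making $G$ an inner form of $G_0$ (using the second hypothesis of (2)); then $G={}^Q G_0$ for a $G_{0,ad}$-torsor $Q$, and Lemma \ref{lem_torsion}.(2) from the appendix shows that the index of $Q$ is a product of torsion primes of $G_{0,ad}$, hence prime to $p$. A further prime-to-$p$ extension thus makes $G$ split. Passing to $G_0/C_0$ via Hilbert 90 for the split radical $C_0$, one is reduced to a torsor under a split semisimple group, and Lemma \ref{lem_torsion}.(2) again gives prime-to-$p$ index. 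The point is that Lemma \ref{lem_torsion}.(2) --- a general statement about indices of torsors under semisimple groups, proved via Steinberg's theorem and Harder's result on torsion primes --- does all the work, with no patching and no hypothesis on $[E]$ beyond its existence.
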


 \begin{proof}
(1) Let $[E]  \in \Sha_{\Omega^1_{div}}(F,G)$. For each prime $l \not = p$,
 we have to prove that there exists a prime to $l$ finite field extension 
 $F'/F$ such that  $E(F') \not = \emptyset$.
This fact follows indeed of Proposition \ref{prop_variant}.(2).

  \sm

\noindent (2) From (1),
 it remains to deal with the case $l=p$.
 Up to replace $F$ by  a suitable  prime to $p$-extension of $F$,
we can assume that $G$ is an inner form of $G_0$.
In other words $G$ is the twist of $G_0$ by a $G_{0,ad}$--torsor
$Q$. Let $\Ind_F(Q)$ be the index of $Q$, that is, the 
$g.c.d.$ of the degrees of the rational points of the $F$-variety $Q$.
Since $p$ is not a torsion prime of  $G_{0,ad}$, 
Lemma \ref{lem_torsion}.(2) shows that $p$ does not divide
$\Ind_F(Q)$ so that $Q$ carries a point after a prime to $p$ extension.
Once again it is harmless to assume that $G=G_0$.

Next we use the exact sequence $1 \to C_0 \to G_0 \to G_0/C_0 \to 1$
where $C_0$ is the radical torus of $G_0$.
Then $C_0$ is split and Hilbert 90 theorem  provides a commutative exact 
diagram of pointed sets 
\[\xymatrix{
 1  \ar[r]  & H^1(F,G_0)   \ar[r]  \ar[d] & H^1(F,G_0/C_0) \ar[d] \\
 1  \ar[r]  & \prod_{v \in \Omega_{F,div} } 
 H^1(F_v,G_0)   \ar[r]  & \prod_{v \in \Omega_{F,div} } H^1(F_v,G_0/C_0).
}\]

\noindent The above diagram
 reduces the proof  $E$ admits a 0-cycle of degree $1$ to the 
 case of an $F$-torsor $Q'$ under the semisimple split group $G'_0=G_0/C_0$.
The  argument used for  $Q$ and $G_{0,ad}$ works and enables
us to conclude that  $E$ becomes trivial  after a prime to $p$ extension of $F$.
\smallskip

\noindent (3) This is straightforward.
 \end{proof}

 \begin{sremarks}{\rm 
 (a)  By inspection of the proof, 
 it is enough to assume that there exist finite fields extensions $F_i$'s of $F$
 such that $g.c.d.( [F_i:F])=1$ and such that $\Sha_{patch}(L_i,G)=1$
 for each finite field extension $L_i/F_i$.

 \sm
 
\noindent  (b) 
 The assumption on the triviality of the sets $\Sha_{patch}$
 is satisfied when $G$ is a retract rational $F$--variety
 \cite[Thm.\ 2.2.4]{K}.

 \sm
 
\noindent  (c) There are cases when  $G$ is not  retract rational
but for which the sets $\Sha_{patch}$ vanish. We provide an example below, 
see Remark \ref{rem_PPS}.
 
  }
 \end{sremarks}

 \subsection{Applications}
 
 Taking into account the results of \S \ref{subsec_serre},
 Theorem \ref{thm_main_cycle}.(3) has the following consequence
 which slightly improves Corollary \ref{cor_main_qs}.

 \begin{scorollary} \label{cor_main_qs2} 
 Let  $G$ be a semisimple quasi-split $F$--group with no $E_8$ factor.
 Assume that $p$ is not a torsion prime of $G$. In each case
 
\sm

(i) $G$ is split;

\sm

(ii) $G$ is simply connected
 or adjoint;

\sm 
 
 \noindent we have $\Sha_{\Omega^1_{div}}(F,G)=1$.
 \end{scorollary}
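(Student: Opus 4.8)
The plan is to deduce Corollary \ref{cor_main_qs2} directly from Theorem \ref{thm_main_cycle}.(3) by checking its three hypotheses for $G$ a semisimple quasi-split $F$--group with no $E_8$ factor and $p$ not a torsion prime of $G$. First I would observe that in each case (i) and (ii) the group $G$ is a rational $F$--variety: a split group is rational by the big cell (Bruhat) decomposition, and a quasi-split simply connected or adjoint semisimple group is a rational $K$--variety (hence $F$--variety) by the classical results on rationality of quasi-split groups. In particular $G$ is retract rational over every finite extension $L/F$, so by \cite[Thm.\ 9.1]{HHK2} together with \cite[Thm.\ 2.2.4]{K} we get $\Sha_{patch}(L,G)=1$ for all finite extensions $L$ of $F$. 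This verifies the standing hypothesis of Theorem \ref{thm_main_cycle}.

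Next I would check the inner-form hypothesis in part (2) of the theorem. Here the key point is that $G$ is \emph{quasi-split} and becomes quasi-split (a fortiori is already quasi-split) after a prime to $p$ extension; but more relevantly, since $G$ is quasi-split its $\ast$--action on the Dynkin diagram factors through $\Out(G_0)$, and the minimal extension of $F$ making $G$ an inner form is the splitting extension of the corresponding outer Galois action. For types $A$, $D$ (other than $D_4$) this is at worst quadratic, for $D_4$ at worst of degree $6$, and for $E_6$ at worst quadratic — in each case the relevant primes dividing the degree ($2$ or $3$) are torsion primes for the associated groups, hence excluded by the hypothesis that $p$ is not a torsion prime of $G$. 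Thus $G$ becomes an inner form of $G_0$ after a Galois prime to $p$ extension of $F$, and since $p$ is not a torsion prime of $G$ it is in particular not a torsion prime of $DG_0 = G_0$ (as $G$ is semisimple), so the hypotheses of Theorem \ref{thm_main_cycle}.(2) hold.

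Finally, for part (3) I would invoke item \eqref{serre3} of \S\ref{subsec_serre}: Serre's $0$--cycle question has a positive answer for quasi-split semisimple groups without factors of type $E_8$, with the torsion primes of $G$ as exceptional set, which is exactly our hypothesis. (One first reduces to the absolutely simple case factor by factor, using that the $0$--cycle question for a product reduces to that for the factors, and then applies \cite[\S 5]{Bl}, \cite[Thm.\ 1.3]{Bh}, \cite[Cor.\ 3.4.5]{G4} case by case.) With all three hypotheses of Theorem \ref{thm_main_cycle} verified, its conclusion (3) gives that any $G$--torsor $E$ with $[E]\in\Sha_{\Omega^1_{div}}(F,G)$ is trivial, i.e.\ $\Sha_{\Omega^1_{div}}(F,G)=1$. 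The main subtlety — though it is really a matter of bookkeeping rather than a genuine obstacle — is to confirm that in case (ii) the degrees of the splitting extensions of quasi-split (not necessarily split) groups only involve primes that are torsion primes of the group in question, so that the prime to $p$ hypotheses of Theorem \ref{thm_main_cycle}.(2) are automatically met; this is handled type by type and is where one must be slightly careful with $D_4$ and the triality action.
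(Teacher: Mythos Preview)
Your overall strategy matches the paper's: verify the standing hypothesis of Theorem~\ref{thm_main_cycle} via retract rationality, invoke the positive answer to Serre's $0$--cycle question from \S\ref{subsec_serre}, and conclude. The paper's own proof is two lines and does exactly this.

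There is, however, a genuine error in your verification of the ``inner form after a prime to $p$ extension'' hypothesis of Theorem~\ref{thm_main_cycle}.(2). You assert that the primes dividing the degree of the minimal inner--making extension are always torsion primes of $G$. This fails in two cases:
\begin{itemize}
\item Type $A_{n-1}$ with $n$ odd: the outer automorphism group has order $2$, so a quasi--split outer form (a unitary group) becomes inner only over a quadratic extension; but the torsion primes of the isogeny class $A_{n-1}$ are exactly the prime divisors of $n$, so $2$ is \emph{not} a torsion prime when $n$ is odd. If $p=2$ the hypothesis of (2) is genuinely violated.
\item Type $D_4$: the torsion primes are $\{2\}$ only, yet a triality form ${}^3D_4$ or ${}^6D_4$ requires an extension of degree divisible by $3$ to become inner. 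If $p=3$ the hypothesis of (2) again fails.
\end{itemize}
So your type--by--type check, and in particular the sentence ``in each case the relevant primes dividing the degree ($2$ or $3$) are torsion primes for the associated groups'', is incorrect.

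The fix is that you do not need part (2) at all. Part (1) of Theorem~\ref{thm_main_cycle} already gives that any $[E]\in\Sha_{\Omega^1_{div}}(F,G)$ admits a $0$--cycle of degree $p^c$. Since $G$ is semisimple and $p$ is not a torsion prime of $G$, this degree is coprime to every torsion prime of $G$. Now apply Serre's $0$--cycle question in its original semisimple formulation (g.c.d.\ of degrees prime to the torsion primes, cf.\ the first paragraph of \S\ref{subsec_serre} and the equivalence noted there via Lemma~\ref{lem_torsion}.(2)): item \eqref{serre3} gives a positive answer for quasi--split groups without $E_8$ factors, so $E$ is trivial. This is what the paper's brief proof is doing when it invokes Theorem~\ref{thm_main_cycle}.(3); the detour through the inner--form hypothesis is unnecessary and, as you see, does not go through in general.
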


 \begin{proof}  In both cases, we have seen at the end of \S \ref{subsec_serre}
 that  Serre's 0-cycle question admits a positive
answer for $G$. Then Theorem \ref{thm_main_cycle}.(3) implies that 
 $\Sha_{\Omega^1_{div}}(F,G)=1$.
 \end{proof}

We deal now with special cases where $G$ is a retract rational 
$F$-variety and when Serre's 0-cycle question has a positive answer.
We recover then by our method results obtained by 
using injectivity properties of cohomological invariants
and local-global principles for Galois cohomology.
Parimala-Suresh's theorem \cite[Cor. \. 4.4]{HKP} states
that the map 
\begin{equation}\label{eq_PS}
H^{q+1}(F, \mu_n^{\otimes q}) \to  \prod_{v \in \Omega^1_{div}} H^{q+1}(F_v, \mu_n^{\otimes q})
\end{equation}
is injective for each $q \geq 1$ and $(n,p)=1$.
For $\PGL_n$, since $H^1(F,\PGL_n)$ injects in $H^{2}(F, \mu_n)$
we obtain

\begin{scorollary} \label{cor_pgl} Let $n$ be an integer prime to $p$.
Then $\Sha_{\Omega^1_{div}}(F, \PGL_n)=1$.
\end{scorollary}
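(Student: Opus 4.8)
The plan is to bypass the patching machinery of the earlier sections entirely and reduce the statement to the Parimala--Suresh local--global principle \eqref{eq_PS} in degree two. Since $(n,p)=1$, we have over $F$ the central extension $1 \to \mu_n \to \SL_n \to \PGL_n \to 1$, whose connecting map
\[
\delta \colon H^1(F,\PGL_n) \lra H^2(F,\mu_n)
\]
sends the class of a central simple $F$--algebra of degree $n$ to its Brauer class. A degree $n$ central simple $F$--algebra with trivial Brauer class is isomorphic to $M_n(F)$, so $\delta$ has trivial kernel (indeed it is injective, the Brauer class together with the fixed degree $n$ determining the algebra up to isomorphism). Performing the same construction over each completion $F_v$ and using functoriality of the connecting map, one obtains a commutative square
\[
\xymatrix{
H^1(F,\PGL_n) \ar[r]^{\delta} \ar[d] & H^2(F,\mu_n) \ar[d] \\
\prod\limits_{v \in \Omega^1_{div}} H^1(F_v,\PGL_n) \ar[r] & \prod\limits_{v \in \Omega^1_{div}} H^2(F_v,\mu_n)
}
\]
in which the vertical arrows are the restriction maps and the bottom arrow is $\prod_v \delta_v$.

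With this in place, the proof I would give is immediate. Let $\gamma = [E] \in \Sha_{\Omega^1_{div}}(F,\PGL_n)$, so that $\gamma_{F_v} = 1$ for every $v \in \Omega^1_{div}$. Chasing the square, $\delta(\gamma) \in H^2(F,\mu_n)$ has trivial image in $H^2(F_v,\mu_n)$ for every such $v$; that is, $\delta(\gamma)$ lies in the kernel of the period map $H^2(F,\mu_n) \to \prod_{v \in \Omega^1_{div}} H^2(F_v,\mu_n)$. By \eqref{eq_PS} applied with $q = 1$ --- legitimate precisely because $(n,p)=1$ --- this period map is injective, whence $\delta(\gamma) = 0$. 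Since $\delta$ has trivial kernel, $\gamma = 1$, so $E$ is the trivial $\PGL_n$--torsor. This is exactly the assertion $\Sha_{\Omega^1_{div}}(F,\PGL_n)=1$.

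There is no genuine obstacle here: all the substance is carried by the cited injectivity result \eqref{eq_PS} and by the classical embedding $H^1(F,\PGL_n) \hookrightarrow H^2(F,\mu_n)$; the only point requiring a moment's care is that one invokes \eqref{eq_PS} in degree $q+1 = 2$ with coefficients $\mu_n$, for which $(n,p)=1$ is precisely the needed hypothesis. It is worth recording why this genuinely strengthens the patching approach: Corollary \ref{cor_main_qs} applied to $G = \PGL_n$ would force $p \nmid |W_0| = n!$, since the Weyl group of $\SL_n$ is $S_n$, whereas the present cohomological argument requires only $p \nmid n$, delivering the improvement announced in the remarks following Theorem \ref{thm_main}.
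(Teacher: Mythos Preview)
Your argument is correct and is exactly the paper's own approach: the paper observes immediately before the corollary that $H^1(F,\PGL_n)$ injects into $H^2(F,\mu_n)$ and then invokes \eqref{eq_PS} with $q=1$, which is precisely what you have written out in detail. Your added remark on the comparison with Corollary~\ref{cor_main_qs} is also accurate and matches the paper's own commentary.
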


It is also a corollary of Corollary \ref{cor_main_qs2}.(1).
Similarly we have the following statement in view
of the Merkurjev-Suslin's theorem on the characterization 
of reduced norms of squarefree central simple algebra
\cite[Thm.\ 8.9.1]{GSz}.

\begin{scorollary} \label{cor_nrd} (Inner type $A$) Let $A$ be a 
central simple $k$--algebra with squarefree index prime to $p$.
Then  $\Sha_{\Omega^1_{div}}(F,\SL_1(A))=1$.
\end{scorollary}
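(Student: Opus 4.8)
The plan is to reduce the statement about $\Sha_{\Omega^1_{div}}(F,\SL_1(A))$ to an injectivity statement in Galois cohomology, parallel to the way Corollary \ref{cor_pgl} follows from Parimala--Suresh. First I would recall that for a central simple $F$-algebra $A$ of index $d$ prime to $p$, torsors under $\SL_1(A)$ are classified by $F^\times/\Nrd(A^\times)$ via the connecting map associated to the exact sequence $1 \to \SL_1(A) \to \GL_1(A) \xrightarrow{\Nrd} \GG_m \to 1$, so that $H^1(F,\SL_1(A)) \cong F^\times/\Nrd(A^\times)$, and likewise over each completion $F_v$.

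The heart of the argument is then to show that an element $a \in F^\times$ which is a reduced norm from $A \otimes_F F_v$ for every $v \in \Omega^1_{div}$ is already a reduced norm from $A$. Here the hypothesis that the index of $A$ is \emph{squarefree} and prime to $p$ is crucial: by the Merkurjev--Suslin theorem on the characterization of reduced norms of squarefree central simple algebras \cite[Thm.\ 8.9.1]{GSz}, being a reduced norm from $A$ is detected by a cohomological condition. Concretely, writing $A$ as a product of symbol-type pieces and using that for squarefree index the reduced norm group is cut out by vanishing of suitable cup products / Galois-cohomological obstructions in $H^{*}(F,\mu_n^{\otimes *})$, I would invoke the Parimala--Suresh injectivity theorem \cite[Cor.\ 4.4]{HKP} (the map \eqref{eq_PS}) to conclude that these obstructions, vanishing over every $F_v$ with $v \in \Omega^1_{div}$, already vanish over $F$. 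Hence $a \in \Nrd(A^\times)$, so the torsor is trivial, giving $\Sha_{\Omega^1_{div}}(F,\SL_1(A))=1$.

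The main obstacle is making the passage between ``$a$ is a reduced norm locally everywhere'' and ``$a$ is a reduced norm globally'' completely precise, i.e.\ identifying exactly which cohomology classes in $H^{q+1}(F,\mu_n^{\otimes q})$ encode the reduced-norm condition for the squarefree algebra $A$ and checking that the local hypotheses over $\Omega^1_{div}$ translate to the local hypotheses needed to apply \eqref{eq_PS}. For $A$ of squarefree index $n$ this is handled by the Merkurjev--Suslin description: after possibly decomposing $A$ according to its primary components (whose indices are distinct primes dividing $n$, each prime to $p$), one reduces to the case of prime index, where the reduced norm group is the kernel of the map $F^\times/F^{\times n} \to H^2(F,\mu_n) \otimes \cdots$, explicitly $a \mapsto (a) \cup [A]$ in $H^3(F,\mu_n^{\otimes 2})$, and \eqref{eq_PS} with $q=2$ applies verbatim. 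One also notes, as the paper already remarks, that this recovers a special case of Corollary \ref{cor_main_qs2}.(1) since $\SL_1(A)$ with $A$ of squarefree degree is a retract rational $F$-variety and $p$ is not a torsion prime, but the point here is the independent derivation from the norm characterization.

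Alternatively, and perhaps more in keeping with the structure of the paper, one could simply observe that Corollary \ref{cor_nrd} is a direct instance of Theorem \ref{thm_main}.(2) (or Corollary \ref{cor_main_qs2}): $G = \SL_1(A)$ is semisimple simply connected of inner type $A$, the Chevalley form is $\SL_n$ whose Weyl group $W_0 = S_n$ has order $n!$ prime to $p$ since $n$ is prime to $p$ and squarefree (hence $n \le$ the relevant bound, but in any case $(n!,p)=1$ requires $p > n$; more carefully one uses that $p$ is not a torsion prime of $\SL_n$, which holds for all $p$, and that $G$ becomes inner after a prime to $p$ extension — indeed it is already inner), and $\SL_1(A)$ for squarefree index is a retract rational $F$-variety by a theorem of Merkurjev. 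Then Theorem \ref{thm_main}.(2) gives $\Sha_{patch}(F,\SL_1(A)) = 1$ hence $\Sha_{div}(F,\SL_1(A))=1$, and Corollary \ref{cor_main}'s chain of equalities upgrades this to $\Sha_{\Omega^1_{div}}(F,\SL_1(A))=1$. I expect the cleanest writeup to combine both viewpoints: cite the norm characterization \cite[Thm.\ 8.9.1]{GSz} together with \eqref{eq_PS} for the self-contained argument, and remark on the deduction from the general theorem.
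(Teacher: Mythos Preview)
Your first approach via the Merkurjev--Suslin characterization of reduced norms together with the Parimala--Suresh injectivity \eqref{eq_PS} is correct, and is exactly the argument the paper alludes to in the sentence immediately preceding the corollary. The paper, however, then writes ``Let us explain our way to prove it'' and gives a different proof: it invokes Theorem~\ref{thm_main_cycle}.(3), using (i) that $\SL_1(A)=\SL_r(D)$ is retract $F$--rational \cite[prop.~2.4]{G2} (so that $\Sha_{patch}(L,\SL_1(A))=1$ for all finite $L/F$), (ii) that $p$ is not a torsion prime of $\SL_n$ and the group is already inner, and (iii) that Serre's $0$-cycle question has a positive answer for $\SL_1(A)$ (Black \cite{Bl}, see \S\ref{subsec_serre}). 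The cohomological-invariant route you sketch is more self-contained but specific to type $A$ with squarefree index; the paper's route illustrates that the general machinery of Theorem~\ref{thm_main_cycle} already covers this case.

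Your second approach has a genuine gap. You propose to apply Theorem~\ref{thm_main}.(2), but that theorem requires $p\nmid |W_0|=n!$, which is strictly stronger than $(p,n)=1$; you notice this yourself and then try to substitute the torsion-prime condition, but Theorem~\ref{thm_main} does not have a torsion-prime version. The result with the torsion-prime hypothesis is Theorem~\ref{thm_main_cycle}, and to reach its part (3) you must also feed in the positive answer to Serre's $0$-cycle question for $\SL_1(A)$, which you do not mention. Likewise Corollary~\ref{cor_main_qs2} concerns quasi-split groups, so it does not apply to $\SL_1(A)$ unless $A$ is split. If you want to run the ``general theorem'' route, cite Theorem~\ref{thm_main_cycle}.(3) and supply both the retract rationality input and the Serre $0$-cycle input, exactly as the paper does.
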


Let us explain our way to prove it.

\begin{proof} We write $A=\mathrm{M}_r(D)$ for a division $F$--algebra $D$.
We have $H^1(F,\SL_1(A))=F^\times/ \Nrd(A^\times)$
and 
as noticed in \eqref{serre2} of \S \ref{subsec_serre},  the answer to Serre's injectivity
question is positive for  $\SL_1(A)$-torsors.
According to \cite[prop.\ 2.4]{G2}, $\SL_1(A)=\SL_r(D)$ is retract $F$--rational.
Theorem \ref{thm_main_cycle}.(3) applies and yields $\Sha_{\Omega^1_{div}}(F,\SL_1(A))=1$.
\end{proof}

\begin{sremark} \label{rem_PPS} {\rm 
Assume that $K$ is a $p$-adic field and that $A$ is of index $n$ prime to $p$.
Parimala-Preeti-Suresh have proven that  $\Sha_{\Omega^1_{dvr}}(F,\SL_1(A))=1$
\cite[Thm.\ 1.1]{PPS}. If the index divides $4$, we know that the 
$F$--group $\SL_1(A)$ is not retract $F$-rational \cite{Me}.
}
\end{sremark}

\begin{scorollary}  \label{cor_D} Let $A$ be a central simple $F$--algebra of 
degree $2n \geq 4$ equipped with an orthogonal 
involution $\sigma$. We assume that $p \not =2$ and consider
the special orthogonal $F$--group $O^+(A, \sigma)$ \cite[\S 23]{KMRT}. 
Then we have  $\Sha_{\Omega^1_{div}}\bigl(F,O^+(A, \sigma)\bigr)=1$.
\end{scorollary}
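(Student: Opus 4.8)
The plan is to reduce the statement to Theorem~\ref{thm_main_cycle}.(3) by verifying its two hypotheses for the special orthogonal group $G=O^+(A,\sigma)$: first, that $\Sha_{patch}(L,G)=1$ for every finite extension $L/F$, and second, that Serre's $0$-cycle question has a positive answer for $G$. For the first, recall that $O^+(A,\sigma)$ is a semisimple group of classical type $D_n$, and a form of $\SO_{2n}$; by the rationality results of Chernousov--Platonov~\cite{CP} (or the classical fact that special orthogonal groups of quadratic forms, and more generally of algebras with orthogonal involution, are rational varieties once they are isotropic, and retract rational in general), $O^+(A,\sigma)$ is a retract rational $F$--variety. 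Hence by \cite[Thm.~9.1]{HHK2} together with \cite[Thm.~2.2.4]{K} we get $\Sha_{patch}(L,G)=1$ for all finite $L/F$ --- in fact one only needs retract rationality over each such $L$, which is automatic since retract rationality is preserved under base field extension for the groups in question.

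For the second hypothesis, I would invoke item~\eqref{serre2} of \S\ref{subsec_serre}: Black's theorem \cite[Thm.~0.2, prop.~3.6]{Bl} establishes that Serre's $0$-cycle question has a positive answer for special orthogonal groups, and Bhaskhar~\cite{Bh} covers the semisimple classical case including type $D$ not of trialitarian type (type $D_4$ with triality does not arise here since $O^+(A,\sigma)$ is an inner-twisted classical group, being the connected component of the automorphism group of an algebra with involution). The hypothesis $p\neq 2$ in the corollary matches the requirement that $p$ not be a torsion prime of $G$ --- for type $D_n$ the torsion primes are just $\{2\}$ --- and it also guarantees that the orthogonal involution and the group scheme $O^+(A,\sigma)$ behave well (e.g.\ smoothness, the exact sequence $1\to O^+(A,\sigma)\to O(A,\sigma)\to\mu_2\to 1$). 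With $p\neq 2$ one also has that $G$ becomes an inner form of its Chevalley form after a prime to $p$ (in fact prime to $2$, hence trivial-on-the-relevant-part) extension, since the outer action for type $D_n$ factors through a $2$-group for $n\neq 4$ and through $S_3$ for $n=4$; here the relevant outer part is controlled by the discriminant of $\sigma$, which is a quadratic extension, so it is split by a degree $2$ hence prime-to-$p$ extension.

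Having checked both hypotheses, Theorem~\ref{thm_main_cycle}.(3) applies directly and yields that any $G$--torsor $E$ with $[E]\in\Sha_{\Omega^1_{div}}(F,G)$ is trivial, i.e.\ $\Sha_{\Omega^1_{div}}(F,O^+(A,\sigma))=1$. The main obstacle, I expect, is not any deep new argument but the bookkeeping needed to confirm that $O^+(A,\sigma)$ genuinely falls under the cases covered in \S\ref{subsec_serre} and under the retract rationality statements cited --- in particular making sure the degree condition $2n\geq 4$ (excluding the degenerate low-rank cases and ensuring $G$ is semisimple of the expected type) is exactly what is needed, and that the type $D_4$ subtlety with triality is correctly sidestepped because $O^+(A,\sigma)$ is visibly of inner type $D_4$ when $n=2$. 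Once these identifications are in place the proof is a one-line appeal to Theorem~\ref{thm_main_cycle}.(3), exactly as in the proofs of Corollaries~\ref{cor_nrd} and~\ref{cor_main_qs2}.
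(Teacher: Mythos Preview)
Your approach is correct and essentially identical to the paper's: verify the hypotheses of Theorem~\ref{thm_main_cycle}.(3) --- rationality (hence $\Sha_{patch}=1$) and a positive answer to Serre's $0$-cycle question via Black \cite[prop.~3.6]{Bl} --- and conclude. Two small points: the paper obtains \emph{rationality} (not merely retract rationality) of $O^+(A,\sigma)$ directly from the Cayley parametrization \cite[p.~599]{W}, which is more precise than your appeal to \cite{CP}; and your remark about triality has an indexing slip --- the $D_4$ case is $n=4$ (degree $2n=8$), not $n=2$, though your substantive point that $O^+(A,\sigma)$ is never trialitarian is correct.
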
 
 
 In the special case of $\SO(q)$
this  was known by applying  \cite[thm 4.3.(i)]{PS} to $\SO_{2n}$ and by inspecting the proof to check that only divisorial valuations are used; if $q$ is isotropic
we have furthermore that $\Sha_{\Omega^1_{div}}(F,\Spin(q))=1$
by using the fact that the map $H^1(F,\Spin(q)) \to H^1(F, \SO(q))$ has trivial kernel.
We present to the  proof of Corollary \ref{cor_D}.
 
 \begin{proof}  It uses the answer to Serre's injectivity
question is positive for  torsors; more precisely
 this is \cite[prop.\ 3.6]{Bl}.
Next  the $F$-group $O^+(A, \sigma)$ is $F$-rational 
 in view of the Cayley parametrization \cite[p.\ 599]{W} so that 
 Theorem \ref{thm_main_cycle}.(3) applies.
 \end{proof}

 In the case of tori, Serre's injectivity question has 
 a positive answer in view of the classical corestriction-restriction argument.

 \begin{scorollary} \label{cor_main_tori} 
 Let  $T$ be an $F$-torus which is split by a prime to $p$ Galois extension. 
 Assume that $T$ is a retract rational $F$--variety.
 We have $\Sha_{\Omega^1_{div}}(F,T)=1$.
 \end{scorollary}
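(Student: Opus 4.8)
The plan is to deduce Corollary \ref{cor_main_tori} directly from the $0$-cycle version, Theorem \ref{thm_main_cycle}.(3), by checking its three hypotheses for the torus $T$. First I would recall that, since $T$ is split by a Galois extension of degree prime to $p$, the group $G=T$ is (trivially) an inner form of its Chevalley model $G_0=T_0$ after a prime-to-$p$ Galois extension, and the ``Weyl group'' $W_0$ of a torus is trivial, so the hypothesis that $p$ does not divide $|W_0|$ is vacuous; likewise the set of torsion primes of $DG_0=1$ is empty, so $p$ is not a torsion prime of $DG_0$. Thus the numerical hypotheses of Theorem \ref{thm_main_cycle}.(2)--(3) hold automatically.

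Next I would verify the hypothesis that $\Sha_{patch}(L,T)=1$ for every finite extension $L/F$. This is where retract rationality enters: a retract rational $F$--variety stays retract rational over any field extension, so $T_L$ is retract rational over $L$ for all finite $L/F$, and \cite[Thm.\ 2.2.4]{K} (the retract rational extension of \cite[Thm.\ 9.1]{HHK2}) gives $\Sha_{patch}(L,T)=1$. Then I would invoke the positivity of Serre's $0$-cycle question for tori: by the standard restriction--corestriction argument, if $\gamma\in H^1(F,T)$ dies in $H^1(F_i,T)$ for finite extensions $F_i/F$ with $\gcd([F_i:F])=1$, then writing $1=\sum n_i[F_i:F]$ and applying $\sum n_i\,\mathrm{Cor}_{F_i/F}\circ\mathrm{Res}_{F_i/F}=\mathrm{id}$ on $H^1(F,T)$ forces $\gamma=1$; so Serre's $0$-cycle question has a positive answer for $T$. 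With all three hypotheses in place, Theorem \ref{thm_main_cycle}.(3) yields $\Sha_{\Omega^1_{div}}(F,T)=1$.

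The only genuinely delicate point is the stability of retract rationality under the field extensions $L/F$ needed above; everything else is bookkeeping. That stability is standard (retract rationality is a geometric-type property preserved by base field extension), so I do not expect it to cause trouble, but it should be stated explicitly rather than taken for granted. In short, the proof is: reduce to Theorem \ref{thm_main_cycle}.(3), note the numerical hypotheses are empty for a torus, use retract rationality (stable under extension) together with \cite[Thm.\ 2.2.4]{K} to kill all the $\Sha_{patch}(L,T)$, and use restriction--corestriction for the Serre $0$-cycle input. The main obstacle, such as it is, is simply to phrase the base-change stability of retract rationality cleanly.
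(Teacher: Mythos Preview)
Your proposal is correct and follows exactly the approach the paper intends: the corollary is stated immediately after the remark that Serre's injectivity (hence $0$-cycle) question has a positive answer for tori by restriction--corestriction, so it is meant as a direct application of Theorem~\ref{thm_main_cycle}.(3) together with the retract-rationality input \cite[Thm.\ 2.2.4]{K}. The only superfluous remark is your mention of the Weyl group $W_0$, which pertains to Theorem~\ref{thm_main} rather than Theorem~\ref{thm_main_cycle}; the relevant numerical hypotheses here are that $DG_0=1$ has no torsion primes and that ``inner form of $T_0$'' for a torus simply means ``split'', which is precisely the prime-to-$p$ splitting hypothesis.
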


 \appendix
 
\section{Torsion primes} 
Let $k$ be a field. If $X$ is a non-empty $k$--scheme locally of finite type, we denote  by
$\Ind_k(X)$ its index, i.e.\ the g.c.d.\ of 
the degrees of the finite  field extensions $k'$ of $k$
such that $X(k') \not= \emptyset$.
 In other words $\Ind_k(X)$ is the smallest positive  
  degree of a $0$-cycle on $X$.
If $k'/k$ is a finite field extension, it satisfies the 
divisibility properties  $\Ind_k(X) \, \mid \, \Ind_{k'}(X_{k'})
\, \mid \, [k':k] \, \Ind_k(X)$.

 Let $M$ be a $k$--group of multiplicative type
 and let $\alpha \in H^i_{\fppf}(k,M)$ with $i \geq 1$.
The group  $H^i_{\fppf}(k,M)$ is torsion
and the period of $\alpha$ is its exponent;  it is denoted
 by $\per_k(\alpha)$. Its index $\Ind_k(\alpha)$ is
 the g.c.d.\ of 
the degrees of the finite  field extensions $k'$ of $k$
such that $\alpha_{k'}=0$. It satisfies similarly the above 
divisibility properties, i.e.\ $\Ind_k(\alpha) \, \mid \, \Ind_{k'}(\alpha_{k'}) \, \mid \, [k':k] \, \Ind_k(\alpha)$
by using the corestriction maps as defined in \cite[\S 0.4]{G1}.

 In the case  of $\Br(k)=H^2(k,\GG_m)$,
this is related with the index of central simple algebras, see \cite[Prop.\ 4.5.1]{GSz}.

\begin{slemma}\label{lem_period}

(1) Assume that $\per_k(\alpha)=l^r$ for a prime $l$.
Then  $\Ind_k(\alpha)$ is a power of $l$. 

\sm

(2) The period $\per_k(\alpha)$ divides
 $\Ind_k(\alpha)$ and these numbers share the same prime divisors.
\end{slemma}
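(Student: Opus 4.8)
The plan is to prove the two assertions about $\per_k(\alpha)$ and $\Ind_k(\alpha)$ for $\alpha \in H^i_{\fppf}(k,M)$ with $M$ of multiplicative type, using the standard corestriction-restriction formalism together with a primary decomposition of the finite abelian group generated by $\alpha$.

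\medskip

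\noindent\textbf{Proof.}
(1) Suppose $\per_k(\alpha) = l^r$. By definition of the index there is a finite field extension $k'/k$ with $\alpha_{k'} = 0$; write $[k':k] = l^a m$ with $(m,l) = 1$. Choose a tower $k \subset k_1 \subset k'$ with $[k':k_1] = l^a$ and $[k_1:k] = m$ (possible after passing to a separable closure of $k$ inside $k'$ and using the corestriction for possibly inseparable extensions as in \cite[\S 0.4]{G1}; alternatively one argues purely with $[k':k]$). Applying corestriction-restriction along $k_1/k$ gives $\Cor_{k_1/k} \circ \Res_{k_1/k}(\alpha) = m\,\alpha$. Since $\per_k(\alpha) = l^r$ is prime to $m$, the element $m\,\alpha$ generates the same cyclic subgroup of $H^i_{\fppf}(k,M)$ as $\alpha$; hence there is an integer $m'$ with $m' m\,\alpha = \alpha$, so $\alpha = \Cor_{k_1/k}(m'\Res_{k_1/k}(\alpha))$ lies in the image of $\Cor_{k_1/k}$. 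As $\alpha_{k'} = 0$ and $[k':k_1] = l^a$, we get $\Ind_{k_1}(\alpha_{k_1}) \mid l^a$, and therefore $\Ind_k(\alpha) \mid [k_1:k]\,\Ind_{k_1}(\alpha_{k_1})$ — but more to the point, $\Res_{k_1/k}(\alpha)$ is killed by a field extension of $k_1$ of $l$-power degree, so repeating the tower argument one reduces to showing: an element killed by a field extension of $l$-power degree has $l$-power index. This last claim is immediate since if $\alpha_{k''} = 0$ with $[k'':k]$ a power of $l$, then $\Ind_k(\alpha) \mid [k'':k]$ is already a power of $l$. Combining, $\Ind_k(\alpha)$ is a power of $l$.

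\medskip

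(2) That $\per_k(\alpha) \mid \Ind_k(\alpha)$ is standard: if $\alpha_{k'} = 0$ with $[k':k] = d$, then $d\,\alpha = \Cor_{k'/k}\Res_{k'/k}(\alpha) = \Cor_{k'/k}(0) = 0$, so the exponent of $\alpha$ divides $d$; taking the g.c.d.\ over all such $k'$ gives $\per_k(\alpha) \mid \Ind_k(\alpha)$. In particular every prime dividing $\per_k(\alpha)$ divides $\Ind_k(\alpha)$. For the converse, let $l$ be a prime dividing $\Ind_k(\alpha)$; we must show $l \mid \per_k(\alpha)$. Write $\per_k(\alpha) = \prod_j l_j^{r_j}$ and use the primary decomposition $\alpha = \sum_j \alpha_j$ where $\alpha_j$ is the $l_j$-primary component of $\alpha$ inside the finite group $\langle \alpha \rangle \subset H^i_{\fppf}(k,M)$; each $\alpha_j$ is an integer multiple of $\alpha$, hence lies in $H^i_{\fppf}(k,M)$ and satisfies $\per_k(\alpha_j) = l_j^{r_j}$. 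By part (1), $\Ind_k(\alpha_j)$ is a power of $l_j$. Since $\alpha = \sum_j \alpha_j$, a field extension killing all the $\alpha_j$ simultaneously kills $\alpha$; choosing for each $j$ an extension of $l_j$-power degree killing $\alpha_j$ and composing them (the resulting degree divides the product $\prod_j \Ind_k(\alpha_j)$ up to the usual divisibility $\Ind_k(\alpha_j) \mid \Ind_{k'}(\alpha_{j,k'}) \mid [k':k]\Ind_k(\alpha_j)$) shows $\Ind_k(\alpha) \mid \prod_j l_j^{s_j}$ for suitable $s_j \geq 0$. Hence every prime dividing $\Ind_k(\alpha)$ is one of the $l_j$, i.e.\ divides $\per_k(\alpha)$. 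This proves that $\per_k(\alpha)$ and $\Ind_k(\alpha)$ have the same prime divisors. $\qed$

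\medskip

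\noindent\emph{Remark on the argument.} The main obstacle is the reduction in (1): one wants to conclude from the existence of \emph{one} splitting field $k'$ of degree $l^a m$ that there is a splitting field of degree $l^a$, and the subtlety is that an intermediate extension $k_1$ with $[k':k_1]=l^a$ need not exist unless one works inside a separable (or Galois) closure — so the clean statement is obtained by first replacing $k'$ by its separable closure over $k$ inside $\ol{k}$, invoking primitive-element/Galois theory there, and noting that purely inseparable degrees are $p$-powers and handled by the corestriction of \cite[\S 0.4]{G1}. Once part (1) is available, part (2) is a formal consequence via the primary decomposition of the cyclic group $\langle\alpha\rangle$, using only that each primary component is again a multiple of $\alpha$ and the multiplicativity/divisibility of indices under composita of field extensions.
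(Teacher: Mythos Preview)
Your argument for (2) is essentially the paper's---corestriction-restriction for $\per \mid \Ind$, then primary decomposition $\alpha = \sum_j \alpha_j$ and an appeal to (1)---with one slip: part (1) asserts only that $\Ind_k(\alpha_j)$ is an $l_j$-power, not that a \emph{single} splitting field of $l_j$-power degree exists, so ``choosing for each $j$ an extension of $l_j$-power degree killing $\alpha_j$'' is unjustified as stated. The repair is immediate and is implicit in both approaches: argue one prime $q \notin \{l_j\}$ at a time, picking for each $j$ a splitting field of $\alpha_j$ of degree prime to $q$ and taking the compositum.

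Your argument for (1), however, has a genuine gap. The tower $k \subset k_1 \subset k'$ with $[k_1:k]=m$ and $[k':k_1]=l^a$ need not exist: for instance take $k=\QQ$, let $L/\QQ$ be Galois with group $S_4$, and set $k'=L^{V_4}$ for the normal Klein four-group $V_4$; then $[k':k]=6$ but $k'$ contains no cubic subfield. Passing to a Galois closure as you suggest does not rescue the argument as written: even granting such a $k_1$, you obtain only that $\Ind_{k_1}(\alpha_{k_1})$ is an $l$-power, and neither the fact that $\alpha \in \Im(\Cor_{k_1/k})$ nor the divisibility $\Ind_k(\alpha) \mid [k_1:k]\,\Ind_{k_1}(\alpha_{k_1})$ forces $\Ind_k(\alpha)$ itself to be an $l$-power; ``repeating the tower argument'' does nothing, since you are now working over $k_1$ rather than $k$. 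A correct field-theoretic route does exist---for each prime $q \neq l$, reduce to a separable splitting field, take its Galois closure $L$, set $k_q=L^Q$ for a $q$-Sylow $Q \subset \Gal(L/k)$, and use $\Cor$-$\Res$ together with $(q,l^r)=1$ to get $\alpha_{k_q}=0$, whence $q \nmid \Ind_k(\alpha)$---but this is not what you wrote, and it still requires separate care when $l=p$. The paper bypasses these field-theoretic issues entirely by arguing structurally on $M$: finite $M$ via the torsor itself as an \'etale scheme of $l$-power degree, tori via reduction to ${_{l^r}T}$, general $M$ via the sequence $1 \to T \to M \to N \to 1$, higher $i$ by dimension shifting, and the case $l=p=\mathrm{char}(k)$ via purely inseparable extensions.
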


\begin{proof} 
(1) We assume first that $l$ is invertible in $k$.

\sm

\noindent{\it Case $i=1$ and $M$ is finite.}
It is harmless to assume that $M$ is $l$--primary and
in particular \'etale.
We see $\alpha \in H^1_{\fppf}(k,M)$ as the class of 
an $M$--torsor $X$.
Since $M$ is \'etale of degree $l^a$
 so is $X$. Then the identity point  of $X(k[X])$ provides  an effective (separable) $0$-cycle 
of degree $l^a$ on $X$. 

\sm

\noindent{\it Case $i=1$ and  $M$ is a torus.}
By using  the exact sequence $1 \to {_{l^r}M} \to M 
\xrightarrow{\times l^r} M \to 1$, we
see that $\alpha$ comes from a class $\beta \in H^1_{\fppf}(k,{_{l^r}M})$.
The preceding case shows  that $\Ind_k(\beta)$ is  a power of 
$l$ and so is $\Ind_k(\alpha)$.

\sm

\noindent{\it Case $i=1$.} We have an exact sequence $1 \to T \to M \xrightarrow{f} N \to 1$ where $N$ is finite and $T$ is a $k$-torus.
From the first case, we know that $\Ind_k(f_*\alpha)$ is an $l$-power, 
and more precisely that  there are finite separable field
extensions  $k_i$'s of $k$  killing $f_*\alpha$ such that $g.c.d.( [k_i:k])$ is 
an $l$--power. For each $i$ the preceding case provides
separable field extensions  $l_{i,j}$'s of $k_i$
killing $\alpha_{k_i}$ (which arises from a 
class of $H^1(k_i,T)\{l\}$) and such that 
$g.c.d.( [l_{i,j}:k_i])$ is  an $l$--power $l^{e_i}$. 
Without loss of generality we can assume that 
we deal with  $l_{i,j}$'s for $i=1, \dots, n$ and $j=1,\dots, m$. 

The $l_{i,j}/l$'s kill $\alpha$
and we claim that  $g.c.d.([l_{i,j}:k])$ is an $l$--power.
Let $q \not = l$ be a prime dividing $[l_{i,j}:k] = [l_{i,j}:k_i] [k_i:k]$
for all $i,j$. Then $q$ divides $l^{e_i} [k_i:k]$
so divides each $[k_i:k]$ which is a contradiction.
Thus $\Ind_k(\alpha)$ is a power of $l$.

\sm

\noindent{\it  $i\geq 2$.} It goes by the classical shifting argument.
Let $K/k$ be a finite separable such that $\alpha_K=0$
and  consider the exact sequence $1 \to M \to R_{K/k}(M) \to Q \to 1$
of $k$--groups of multiplicative type. Since $H^i_{\fppf}(k,R_{K/k}(M))=H^i_{\fppf}(K,M)$,
$\alpha$ comes from a class $\gamma \in H^{i-1}_{\fppf}(k,Q)\{l\}$ with respect to 
the boundary map $\partial : H^{i-1}_{\fppf}(k,Q) \to H^i_{\fppf}(k,M)$.
By induction on $i$, we obtain  that $\Ind_k(\gamma)$ is a power of 
$l$ and so is $\Ind_k(\alpha)$.

\sm
We come now to the case when $l=p$ is the characteristic of $k$.

\sm

\noindent{\it Case $M$ is finite.}
We can assume that $M$ is $p$--primary so that 
$M(\ol{k})=1$. Since $H^i(\Gal(\ol{k}/k^{perf}), M(\ol{k}))
\simlgr H^i_{\fppf}(k^{perf},M)$, it follows that 
$\alpha_{k^{perf}}=1$. There exists then 
a finite purely inseparable field extension of $k$
which kills $\alpha$. Thus $\Ind_k(\alpha)$ divides a power of $p$.

\sm

\noindent{\it General case.}
The above arguments (by exact sequences and shifting) yield
the torus case and the general case.

\sm

\noindent (2) For each finite field extension $k'/k$,
we have a corestriction  map $\Cor_k^{k'}: H^i(k',M) \to H^i(k,M)$
which satisfies $\Cor_k^{k'} \circ \Res_k^{k'}= \times [k':k]$,
see \cite[\S 0.4]{G1}.
If $k'$ splits $\alpha$, it follows that $[k':k] \alpha=0$
so that  $[k':k]$ divides $\per_k(\alpha)$.
It follows that $\per_k(\alpha)$ divides
 $\Ind_k(\alpha)$.
It remains to show that a prime divisor $l$ of 
$\Ind_k(\alpha)$ divides $\per_k(\alpha)$.
We consider the prime decompositon  $\per_k(\alpha)=p_1^{r_1} \dots p_c^{r_c}$  and write $\alpha= \alpha_1 + \dots+ \alpha_c$ 
with $\per_k(\alpha_i)=p^{r_i}$.
Given a finite field extension $k'/k$,
$k'/k$ kills $\alpha$ iff it kills each $\alpha_i$.
It follows that $\Ind_k(\alpha)$ is the g.c.d.\ of the 
$\Ind_k(\alpha_i)$'s.
We are then reduced to the case  $\per_k(\alpha)=l^{r}$ for a prime $l$
which has been handled in (1).

\end{proof}

\begin{sremark}{\rm
By inspection of the proof, in (1), if $l$ is invertible
in $k$, the index is realized by separable field extensions.
A contrario, if $l=p$ is the characteristic of $k$,
the index is realized with a single purely inseparable field extension.
}
\end{sremark}

Given a semisimple  algebraic group $G$ defined over 
 $k$, we remind the reader that we can attach
 to $G$ a list of torsion primes \cite[\S 4.8, 5.1]{G4}
 which depends only of the isogeny class of $G$
and which extend  Serre's definition \cite[\S 2.4]{Se}.   
The index $\Ind_k(G)$ of $G$ is the g.c.d.\ of
the degrees of the finite splitting fields extensions of $G$, that is, making $G$ split. In other words, if $G_0$ is Chevalley form
of $G$, this is the index of  the $\Aut(G_0)$-torsor
$\Isom(G_0,G)$.

\begin{slemma} \label{lem_torsion}
(1) If $G$ is almost $k$-simple,  the primes dividing $\Ind_k(G)$ are torsion primes.

\sm
\sm

\noindent (2) If $E$ is a $G$--torsor, the 
primes dividing $\Ind_k(E)$ are torsion primes.

\sm

\noindent (3) Let $X$ be the variety of Borel $k$--subgroups of $G$.
Then the primes dividing $\Ind_k(X)$ are torsion primes.

\end{slemma}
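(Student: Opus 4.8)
The plan is to deduce all three parts from known structural facts about torsion primes together with Lemma \ref{lem_period}, reducing everything to the case of a group of multiplicative type where that lemma applies directly.

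**For part (1),** the first step is to reduce to the simply connected cover. If $\widetilde G \to G$ is the simply connected central isogeny with kernel $\mu$, a $k$--splitting field of $\widetilde G$ splits $G$, and conversely a splitting field of $G$ splits $\widetilde G$ up to a further extension whose degree involves only primes dividing $|\mu|$; but the primes dividing $|\mu|$ are precisely the torsion primes coming from the fundamental group, so it suffices to treat $\widetilde G$ simply connected. Next, $\Isom(G_0,\widetilde G)$ is an $\Aut(\widetilde G_0)$--torsor; using the sequence $1 \to \widetilde G_{0,ad} \to \Aut(\widetilde G_0) \to \Out(\widetilde G_0) \to 1$ one splits the index into a contribution from the (finite, order dividing the order of the symmetry group of the Dynkin diagram) group $\Out(\widetilde G_0)$ and a contribution from an inner form, i.e.\ a $\widetilde G_{0,ad}$--torsor $Q$. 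The graph-automorphism primes ($2$ for types $A_n$, $D_{n\neq 4}$, $E_6$; $3$ for $D_4$) are all torsion primes, so one is reduced to bounding $\Ind_k(Q)$. Then I would invoke the classical fact (Tits, Serre) that the index of an inner form — equivalently the period/index of the associated Tits class — is divisible only by torsion primes; concretely $Q$ is classified by a class in $H^1(k,\widetilde G_{0,ad})$ whose image in $H^2(k, Z(\widetilde G_0))$ (the Tits class) has period involving only torsion primes, and by Lemma \ref{lem_period}.(1) applied prime-by-prime its index does too. Descending along the fibration $Q \to \mathrm{Spec}(k)$ and chasing through $\widetilde G_{0,ad}$--torsors with trivial Tits algebras (quasi-split inner forms, which split over extensions of degree dividing the order of the center, again a torsion-prime quantity) closes the argument.

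**Part (2)** follows from (1) by a twisting argument: if $E$ is a $G$--torsor, then $E(k') \neq \emptyset$ implies $G_{k'} = {}^{E}G_{k'}$ is the split form base-changed appropriately — more precisely, over a splitting field $k''$ of ${}^EG$ the torsor becomes a torsor under a split group, and the variety of trivializations of such a torsor fibers over $\mathrm{Spec}$ with fibers isomorphic to the split group, hence has a point. So $\Ind_k(E) \mid \Ind_k({}^EG) \cdot (\text{index of } E \text{ over a splitting field})$; the second factor is $1$ once the group is split (Hilbert 90 / the structure of torsors under split groups via Bruhat decomposition gives a rational point over the base field when $G$ is split and $k$ arbitrary — a split torsor is rationally trivial), and the first is torsion-prime by (1) applied to ${}^EG$, which is almost $k$--simple of the same type.

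**Part (3)** is the case $E = \Bor(G)$, the torsor under $G$ of Borel subgroups, viewed as a projective homogeneous variety $G/B_0$. A point of $X$ over $k'$ is a Borel subgroup of $G_{k'}$, i.e.\ $G_{k'}$ is quasi-split; and a quasi-split group splits over an extension of degree dividing the order of $\Out$, which is torsion-prime. Conversely over a splitting field $X$ acquires a point (the split group has a Borel). So $\Ind_k(X)$ divides a product of a quasi-splitting degree and a further splitting degree of a quasi-split group, both with only torsion prime divisors, giving the claim. \textbf{The main obstacle} I anticipate is making the reduction in (1) genuinely clean rather than case-by-case: one wants a uniform statement that the index of an inner form is controlled by the torsion primes, which is essentially the content of Tits' theory of the Tits class and its index, and I would lean on Lemma \ref{lem_period} to avoid re-deriving it, treating one prime $l$ at a time and using the finiteness of $\Out$ and of the center to absorb the graph-automorphism and isogeny contributions.
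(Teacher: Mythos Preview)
Your argument for (2) contains a fatal error: the claim that ``a split torsor is rationally trivial'' --- i.e.\ that $H^1(k,G_0)=1$ for $G_0$ split semisimple --- is simply false. For instance $H^1(k,\mathrm{SO}_n)$ classifies rank-$n$ quadratic forms with fixed discriminant, and $H^1(k,G_2)$ classifies octonion algebras; neither set is a singleton in general. Hilbert~90 gives triviality only for $\mathrm{GL}_n$ (and more generally for \emph{special} groups in Serre's sense), not for arbitrary split reductive groups. So reducing $E$ to a torsor under a split group does not finish the argument, and it is precisely at this point that the real content of the lemma lies. Your sketch of (1) has a related problem: you identify the index of an inner form with the index of its Tits class, but these do not coincide (for type $G_2$ the center is trivial, so the Tits class is always zero, yet nontrivial inner forms exist with index $2$). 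Killing the Tits class only lifts the adjoint torsor to a simply connected one; it does not trivialise it.

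The paper's route for (2) is different and actually does the work. For $G$ adjoint quasi-split it invokes Steinberg's theorem: every class $\gamma\in H^1(k,G)$ lifts to some maximal $k$--torus $T\subset G$, hence comes from $H^1(k,{_dT})$ where $d$ is the period of a lift. Lemma~\ref{lem_period} then shows $\Ind_k(\gamma)$ is a power of $d$, and a result of Harder (cited as \cite[Thm.\ 5.1.2]{G4}) guarantees that the primes dividing $d$ are torsion primes. The passage to general $G$ goes in two further steps: from adjoint to adjoint quasi-split by applying the previous case to the inner cocycle (this produces the quasi-splitting extensions of torsion-prime degree), and from $G$ to $G_{ad}$ using that the kernel $\mu$ has order a product of torsion primes together with Lemma~\ref{lem_period} again. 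For (1) the paper simply cites Tits \cite[prop.\ A1]{T2}; (3) is then parallel to (2).
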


\begin{proof}
(1) This follows of  \cite[prop.\ A1]{T2}.

\smallskip

\smallskip

\noindent (2)  We assume first that $G$ is 
semisimple adjoint and quasi-split. Given $\gamma \in H^1(k,G)$, 
Steinberg's theorem provides a maximal $k$--torus $T$
such that $\gamma$ belongs to the image of 
$H^1(k,T) \to H^1(k,G)$ \cite[Thm.\ 11.1]{St}. Let $d$ be the period of 
an antecedent of $\gamma$.
Then $\gamma$ belongs to the image $H^1(k,{_dT}) \to H^1(k,G)$
so that $\Ind_k(\gamma)$ divides a power of $d$ 
according to Lemma \ref{lem_period}.
 In view of a result of Harder
\cite[Thm.\ 5.1.2]{G4}, the primes dividing $d$ are torsion primes.
Thus the primes dividing $\Ind_k(\gamma)$ are torsion primes.

The next case is when $G$ is adjoint.
We denote by $G^{qs}$ the quasi-split form of $G$.
Since $G$ is  inner form of  $G^{qs}$, the first 
case shows that there exists finite field extensions
$k_i$'s of $k$ which quasi-split $G$ and such that 
the prime divisors of $g.c.d.( [k_i:k])$ are torsion primes.
Let $E$ be a $G$--torsor. For each $i$, the first case provides
 finite field extensions
$l_{i,j}$'s of $k_i$ which split $E_{k_i}$ and such that 
the prime divisors of $g.c.d.( [l_{i,j}:k_i])$ are torsion primes.
It follows that the $l_{i,j}$'s split $E$ and
that $g.c.d.( [l_{i,j}:k])$ is a product of  torsion primes
(the argument is similar with that in the proof of Lemma \ref{lem_torsion}.(2)).

For the general case, we consider the exact sequence
$1 \to \mu \to G \to G_{ad} \to 1$.
Let $E$ be a $G$--torsor. The preceding  case provides
 finite field extensions
$k$'s of $k$ which split $E_{ad}= E/\mu$ and such that 
the prime divisors of $g.c.d.( [k_i:k])$ are torsion primes.
For each $i$, there exists a $\mu$--torsor $F_i$
such that $E_{k_i} \cong F_i \wedge^{\mu} G$.
Since the order of $\mu$ is a product of torsion primes, 
for each $i$, there are finite field extensions
$l_{i,j}$'s of $k_i$ which split $F_{k_i}$ (and then 
a fortori $E_{k_i}$) and such that 
the prime divisors of $g.c.d.( [l_{i,j}:k_i])$ are torsion primes
(Lemma \ref{lem_period}).
It follows that the $l_{i,j}$'s split $E$ and
that $g.c.d.( [l_{i,j}:k])$ is a product of  torsion primes.

\sm

\noindent (3) It is proved  along the same lines as (2). 

\end{proof}

 \section{0-cycles for twisted flag varieties}
 We can also discuss the following variant of the main result of \cite[Thm. 5.4]{GP}.
 The setting is that of \S \ref{subsec_setting}. 
The ring $B$ is a  complete  discrete valuation ring with fraction field $K$, residue field $k$ and uniformizing parameter $t$; the field  $F$ is a one-variable function field over $K$.

 \begin{stheorem} \label{thm_patching} Let $G$ be a semisimple
 $F$--group. Let $Z$ be a twisted flag $F$--variety of $G$
 such that $Z(F_v) \not = \emptyset$
 for all discrete valuations of $F$ arising from
 normal models of $X$.

  \sm

 \noindent (1) $Z$ admits a $0$--cycle of degree $p^c$.

  \sm

 \noindent (2) Assume that $p$ is not a torsion prime of $G$
 and that $G$ becomes quasi-split after a prime to $p$ Galois extension of $F$.
  Then the $F$-variety $Z$ admits a $0$-cycle of degree 1.
 \end{stheorem}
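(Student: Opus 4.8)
The plan is to imitate, prime by prime away from $p$, the strategy of Theorem \ref{thm_main_cycle} and Proposition \ref{prop_variant}: feed the output of the prime-to-$l$ reduction (Theorem \ref{thm_cgr2}) into the patching argument of Proposition \ref{prop_key}, and finish with an index computation. First I would fix the algebraic data. Write $G$ as a twisted form of its Chevalley model $G_0$, push the class $[G]\in H^1(F,\Aut(G_0))$ to the finite constant $\ZZ$-group scheme $\Out(G_0)$ to extract a finite subgroup $\Gamma\subset\Out(G_0)(\ZZ)$ through which the outer part factors, and set $\Aut_\Gamma(G_0)=\Aut(G_0)\times_{\Out(G_0)}\Gamma$, so that $[G]$ comes from a class $\beta\in H^1(F,\Aut_\Gamma(G_0))$. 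Since $Z$ is a twisted flag variety of $G$, it is, up to $F$-isomorphism, the twist by $\beta$ of the $\ZZ$-scheme of parabolic subgroups of $G_0$ of a type $\Theta$ which, $Z$ being defined over $F$, is stable under $\Gamma$ (equivalently under the Galois action); in particular $Z$ acquires a rational point over any extension over which $\beta$ lifts to the normalizer $\widetilde P_\Theta\subset\Aut_\Gamma(G_0)$ of a parabolic of type $\Theta$. Note already that if $G$ is quasi-split, such a lift exists over $F$, so $Z(F)\neq\emptyset$; this will drive (2).

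For (1) I would argue as follows. Fix a prime $l\neq p$. The $\ZZ$-group scheme $\widetilde G_0=\Aut_\Gamma(G_0)$ fits in $1\to G_{0,ad}\to\widetilde G_0\to\Gamma_\ZZ\to 1$ with $\Gamma$ finite, so Theorem \ref{thm_cgr2} applies and yields a finite field extension $L/F$ of degree prime to $l$ together with a finite $\ZZ$-subgroup $S^\sharp\subset\widetilde G_0$ of $l$-power order such that $\beta_L$ lies in the image of $H^1(L,S^\sharp)$; since $l\neq p$, the $B$-group scheme $S^\sharp_B$ is finite étale of order prime to $p$. The local hypothesis passes to $L$ (a divisorial valuation of $L$ lies over a divisorial, hence normal-model, valuation of $F$, over which $Z$ has a point), so $Z_L$ has a point over every such completion of $L$. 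Now I would run the argument of Proposition \ref{prop_key}, replacing the isomorphism scheme $\Isom_{\widetilde G}(E_1,E_2)$ by the twisted flag $L$-variety $Z_L$: after spreading out over a regular model of $X_L$ and localizing a complete local ring $\widehat R_P$ at the reduced special divisor $D$, the scheme $Z_L$ extends smoothly and properly to $A_D$, the twisting class is loop there because it reduces to the prime-to-$p$ finite étale $S^\sharp$, and the loop-injectivity theorem \cite[Thm.\ 6.9]{G6} — applied to the reductive Levi quotient of $\widetilde P_\Theta$, the unipotent radical contributing nothing to $H^1$ over an affine scheme — propagates solvability of $Z_L$ from $F_{P,v}$ to $F_P$, and likewise to each $F_U$. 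Patching of twisted flag varieties over the resulting patch (as in \cite[Thm.\ 5.4]{GP}) then gives $Z_L(L)\neq\emptyset$, so $\Ind_F(Z)$ divides the prime-to-$l$ integer $[L:F]$; as $l$ ranges over all primes $\neq p$, this shows $\Ind_F(Z)$ is a power of $p$.

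For (2) I would combine (1) with a torsion-prime estimate. The projection from the variety of Borel $F$-subgroups of $G$ to $Z$, sending a Borel subgroup to the unique parabolic of type $\Theta$ containing it, shows that $\Ind_F(Z)$ divides the index of the Borel variety, whose prime divisors are torsion primes of $G$ by Lemma \ref{lem_torsion}.(3). As $p$ is not a torsion prime of $G$, we get $p\nmid\Ind_F(Z)$, and with (1) this forces $\Ind_F(Z)=1$, i.e. $Z$ carries a $0$-cycle of degree $1$. Alternatively, the hypothesis that $G$ becomes quasi-split over a prime-to-$p$ Galois extension $F_0/F$ gives $Z(F_0)\neq\emptyset$ by the first paragraph, hence $\Ind_F(Z)\mid[F_0:F]$ is prime to $p$, and one concludes as before.

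The main obstacle is the patching step in (1): transplanting the loop-torsor injectivity of \cite{G6} from torsors under reductive $B$-group schemes to the homogeneous space $Z_L$, and checking that once the twisting cocycle is loop of order prime to $p$ the local solvability of $Z_L$ over the $F_P$'s and $F_U$'s of a regular model forces $L$-solvability — the unipotent radical of the relevant parabolic being harmless (it is split, so its $H^1$ vanishes over an affine scheme), which reduces matters to the reductive-by-$\Gamma$ Levi and to the branch-gluing already handled in \cite[Thm.\ 5.4]{GP}. A routine point to verify en route is the compatibility of the various sets of valuations under the finite extension $L/F$.
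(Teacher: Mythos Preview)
Your overall architecture---localize at a prime $l\neq p$, pass to a prime-to-$l$ extension where the twisting torsor reduces to a finite \'etale $B$-subgroup of $l$-power order via Theorem~\ref{thm_cgr2}, spread out over a regular model minus a strict normal crossing divisor, handle closed points of the special fibre by loop methods, and patch---is exactly the paper's. Part~(2) is also fine: your second route (a quasi-split form has parabolics of every type, so a point over the prime-to-$p$ extension) is the paper's argument, and your first route via Lemma~\ref{lem_torsion}(3) is a legitimate alternative.

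The gap is precisely where you flag it, and your proposed fix does not close it. You want to deduce $Z(F_P)\neq\emptyset$ from $Z(F_{P,v})\neq\emptyset$ by applying loop-torsor injectivity \cite[Thm.~6.9]{G6} to the Levi quotient $L$ of the parabolic $\widetilde P_\Theta$. But that injectivity statement compares two \emph{given} loop classes in $H^1_{loop}(A_D,L)$; it says nothing about whether a class $\beta\in H^1_{loop}(A_D,\Aut_\Gamma(G_0))$ that lifts to $\widetilde P_\Theta$ over $F_{P,v}$ already lifts over $A_D$. There is no a priori loop candidate in $H^1(A_D,L)$ to feed into the injectivity map, and a lift produced over $F_{P,v}$ has no reason to be loop. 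Killing the unipotent radical is harmless, but the remaining problem---producing a lift over $A_D$---is not an injectivity question.

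The paper sidesteps this entirely. It works with $\Aut_I(G_0)$ (the stabilizer of the type $I$ in $\Aut(G_0)$; your $\Aut_\Gamma(G_0)$ sits inside it) so that the twisted group scheme $\gG={^{\gQ_I}}G_0$ over $A_D$ is \emph{loop reductive}, and then invokes \cite[Thm.~4.1, (iii)$\Rightarrow$(i)]{G5}: for a loop reductive group scheme over $A_D$, admitting a parabolic of type $I$ over $F_{P,v}$ forces one over $A_D$. This is a structural result about parabolics of loop reductive group schemes, not a consequence of torsor injectivity. With $Z(F_P)\neq\emptyset$ thus established for all closed $P$ (and $Z(F_y)\neq\emptyset$ for codimension-one $y$ by hypothesis), the paper concludes $Z(F)\neq\emptyset$ via \cite[Cor.~4.5]{GP}.
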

 
 \begin{proof}
 Without loss of generality we can assume that $G$ is adjoint.
   Let $G_0$ be the Chevalley form of $G$ and
   let $(B_0,T_0)$ be a
   Killing couple for  $G_0$ and let $\Delta_0$ be
   the associated Dynkin diagram.
 The variety $Z$ is a form of the variety $\Par_I(G_0)$
  of parabolic subgroups of type $I$ where $I \subset \Delta_0$ is stable
   under the star action defined by the $\Aut(G_0)$--torsor
   $Q=\mathrm{Isom}(G_0,G)$.
   In particular $Q$ admits a reduction $Q_I$ to the stabilizer
   $\Aut_I(G_0)$ for the action  $\Aut(G_0)$ on $\Delta_0$ through
the morphism $\Aut(G_0) \to \Out(G_0)\simlgr \Aut(\Delta_0)$.
Furthermore $Z$ is isomorphic to  $^{Q_I} \! \Par_I(G_0)$.

 \smallskip

 \noindent (1) For each prime $l \not =p$, we have to
 show that there exists a finite prime to $p$ field extension
 $L/F$ such that $Z(L) \not= \emptyset$.
We apply  now Theorem \ref{thm_cgr2}.(2)
to the $\ZZ$--group scheme $\Aut_I(G_0)$.
It provides a finite $\ZZ$--subgroup $S^\sharp$
of degree $l^m$ of $\Aut_I(G_0)$ and a finite extension $L$ of
$F$ of degree prime to $l$ such that $Q_{I,L}$
admits a reduction to  $S^\sharp$, that is, there exists
a $S^\sharp$-torsor $E$ over $L$ such that $E \wedge^{S^\sharp} \Aut_I(G_0) \cong Q_I$.
It is harmless to replace $L$ by $F$.
We have that  $S^\sharp_B$ is finite \'etale of degree $l^m$.

The  finite flat $\ZZ$--group scheme $S^\sharp$ admits
a faithful representation $S^\sharp \hookrightarrow \GL_{N,\ZZ}$
   \cite[\S 1.4.5]{BT} and the quotient
   $\GL_{N,\ZZ} /S^\sharp$ is representable by an affine $\ZZ$-scheme \cite[\S III.2.6]{DG}.
  According to \cite[Lemma 5.1]{GP},
  there exists  a regular model $\gX$ of $X$ and
  a strict normal crossing divisor  $D$ containing  the irreducible components of $Y$ such that $E$ extends
  to a $\gX \setminus D$--torsor $\gE$  under $S^\sharp$.
  We put $\gQ_I= \gE \wedge^{S^\sharp}\Aut_I(G_0)$ and consider
  the $\gX \setminus D$-group scheme $\gG= {^{\gQ_I}\!G_0}$ with generic fiber $G$.
  
 If $y\in Y$ is of codimension one, by hypothesis, $Z(F_y)$ is not empty. 
 We therefore look at a closed point $P \in Y$ and pick a height one prime $\cp$ in $\widehat R_P$ that contains
  $t$. It  defines a branch of $Y$ at $P$ lying
 on some irreducible component $Y_1$ of $Y$.

We consider the local ring $A=R_P$ of $\gX$ at $P$ and denote by $A_D$
its localization at $D$. Since $S^\sharp_B$ is finite \'etale of degree prime
to $p$, $H^1(A_D,S^\sharp)$ consists in loop torsors as defined in \cite[\S 2.3, lemma 2.3.(2)]{G5},  i.e. those arising from cocycles related to tame Galois covers of $A_D$. It follows that the $A_D$--torsor
$Q_I$ is a loop $\Aut_I(G)$--torsor \cite[lemma 2.3.(3)]{G5},
so that $\gG \times_{\gX \setminus D} A_D$ is
by definition a loop reductive group scheme.

Let $F_{P,v}$ be the completion of the field $F_P$ for the valuation associated to the blow-up of $\Spec(A)$ at its closed point.
Our assumption states in particular that $Z(F_{P,v}) \not = \emptyset$,
that is, $G_{F_{P,v}}$ admits a parabolic subgroup of type $I$.
According to \cite[th.\ 4.1, (iii) $\Longrightarrow$ (i)]{G5}, $\gG \times_{\gX \setminus D} A_D$
admits a parabolic subgroup of type $I$.
A fortiori $G_{F_P}$ admits a parabolic subgroup of type
$I$ so that $Z(F_P) \not = \emptyset$.
In view of \cite[Cor.\ 4.5]{GP}, we conclude that $Z(F)$ is not empty.

 \smallskip

 \noindent (2) After (1), it remains to deal with the case $l=p$.
 In the proof of Theorem \ref{thm_main_cycle}, we have seen
 that our assumption on $p$ implies that $G$ becomes split after
  a prime to $p$ extension $L/F$. It follows that $Z(L) \not = \emptyset$.
 \end{proof}

In several cases, we know that is the $F$-variety $Z$ admits a $0$-cycle of degree 1,
then it admits  $F$-point. This provides the following improvement of \cite{GP}
which is close of Corollary  \ref{cor_main_qs2}.

\begin{scorollary}
Let $G$ be a semisimple $F$-group and assume that
$p$ is not torsion prime of $G$ and that $G$ becomes
quasi-split after a prime to $p$ Galois extension.
 Let $Z$ be the variety of Borel subgroups of $G$. Assume that
$G$ has no factor of type $E_8$. If $Z(F_v) \not = \emptyset$
 for all discrete valuations of $F$ arising from
 normal models of $X$, then $Z(F) \not = \emptyset$.
 \end{scorollary}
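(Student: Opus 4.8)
The plan is to combine Theorem~\ref{thm_patching} with the positive answer to Serre's $0$-cycle question recalled in \S\ref{subsec_serre}. As in the proof of Theorem~\ref{thm_patching} I would first reduce to the case where $G$ is adjoint, using that the variety of Borel subgroups of $G$ is canonically isomorphic to that of $G_{\mathrm{ad}}$ and that $G$ is quasi-split if and only if $G_{\mathrm{ad}}$ is. Write $G={}^QG^{qs}$, where $G^{qs}$ is the quasi-split inner form of $G$ and $Q$ is a $G^{qs}$-torsor (here $G^{qs}$ is adjoint, so it coincides with its group of inner automorphisms); then $[Q]\in H^1(F,G^{qs})$ and $Z$ is the twisted form $^Q\bigl(\text{variety of Borel subgroups of }G^{qs}\bigr)$, a twisted flag $F$-variety of $G$. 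Since $G^{qs}$ is semisimple adjoint, the exact sequence $1\to G^{qs}\to\Aut(G^{qs})\to\Out(G^{qs})\to1$ splits by a pinning, so for every field extension $F'/F$ the boundary map $\Out(G^{qs})(F')\to H^1(F',G^{qs})$ is trivial and hence the kernel of $H^1(F',G^{qs})\to H^1(F',\Aut(G^{qs}))$ is trivial. It follows that, for any such $F'$, one has $Z(F')\neq\emptyset$ if and only if $G_{F'}$ is quasi-split, if and only if $G_{F'}\cong G^{qs}_{F'}$ (the quasi-split inner form being unique up to isomorphism), if and only if $[Q]_{F'}=1$ in $H^1(F',G^{qs})$.

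Granting this dictionary, the argument runs as follows. Our hypotheses are precisely that $p$ is not a torsion prime of $G$, that $G$ becomes quasi-split after a prime to $p$ Galois extension of $F$, and that $Z(F_v)\neq\emptyset$ for every discrete valuation $v$ of $F$ arising from a normal model of $X$; so Theorem~\ref{thm_patching}.(2) applies and shows that $Z$ admits a $0$-cycle of degree $1$. Equivalently, there are finite field extensions $F_1,\dots,F_c$ of $F$ with $\gcd_i[F_i:F]=1$ and $Z(F_i)\neq\emptyset$ for each $i$, i.e.\ $[Q]$ lies in the kernel of $H^1(F,G^{qs})\to\prod_{i}H^1(F_i,G^{qs})$. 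Now $G^{qs}$ is a quasi-split semisimple group without factor of type $E_8$ (since $G$ has none), so the positive answer to Serre's $0$-cycle question for such groups (\S\ref{subsec_serre}.(3)) shows that this kernel is trivial. Hence $[Q]=1$, that is, $G$ is quasi-split over $F$ and $Z(F)\neq\emptyset$.

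The only delicate point is the chain of equivalences in the first paragraph expressing $Z(F')\neq\emptyset$ through the vanishing of $[Q]\in H^1(F',G^{qs})$; once it is in place, the existence of a degree-one $0$-cycle on the flag variety $Z$ becomes exactly an instance of Serre's $0$-cycle question for the quasi-split group $G^{qs}$, and Theorem~\ref{thm_patching}.(2) together with \S\ref{subsec_serre}.(3) complete the argument. I do not expect any further obstacle.
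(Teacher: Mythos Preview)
Your proposal is correct and follows the same route the paper indicates (the paper gives no explicit proof, only the sentence preceding the corollary). You apply Theorem~\ref{thm_patching}.(2) to obtain a $0$-cycle of degree~$1$ on $Z$, then translate this via the dictionary $Z(F')\neq\emptyset \Leftrightarrow [Q]_{F'}=1$ into an instance of Serre's $0$-cycle question for the quasi-split adjoint inner form $G^{qs}$, and conclude by \S\ref{subsec_serre}.(3); this is exactly what the paper's hint ``in several cases, we know that if $Z$ admits a $0$-cycle of degree~$1$, then it admits an $F$-point'' is pointing to. Your justification of the dictionary (reduction to the adjoint case, splitting of $1\to G^{qs}\to\Aut(G^{qs})\to\Out(G^{qs})\to 1$ via a pinning so that the connecting map is trivial and hence $H^1(F',G^{qs})\to H^1(F',\Aut(G^{qs}))$ has trivial kernel) is the standard one and is sound.
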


An important case is the following.

\begin{scorollary} \label{cor_hyp}
Assume that $p \not =2$.

\sm
\noindent (1) Let $q$ be a regular quadratic form of  rank $\geq 3$.
If $q_{F_v}$  is isotropic  
 for all discrete valuations of $F$ arising from
 normal models of $X$, then $q$ is isotropic.

\sm

\noindent (2) 
Let $A$ be a central simple $F$--algebra
of degree $2n$ equipped with an orthogonal
involution $\sigma$.
If $\sigma_{F_v}$  is hyperbolic  
 for all discrete valuations of $F$ arising from
 normal models of $X$, then $\sigma$ is hyperbolic.
 \end{scorollary}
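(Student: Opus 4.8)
The plan is to deduce both assertions from Theorem~\ref{thm_patching} by realising the relevant property as the non-emptiness of a twisted flag $F$-variety, and then to conclude by an odd-degree descent; the assumption $p\neq 2$ serves to make $2$ an admissible prime different from $p$. For the first assertion, I would take $G=\SO(q)$, which is semisimple over $F$ because $q$ is regular of rank $\geq 3$ and $p\neq 2$ (of type $A_1$, $A_1\times A_1$, $B_m$ or $D_m$ according to the rank). The smooth projective quadric $Z=\{q=0\}$ is the variety of isotropic lines of $q$, a projective homogeneous space under $\SO_n$ and hence a twisted flag variety of $G$ (the relevant node of the Dynkin diagram is fixed by the star action; for $n\leq 4$ the quadric is the variety of Borel subgroups). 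For every extension field $L/F$ one has $q_L$ isotropic if and only if $Z(L)\neq\emptyset$, so the hypothesis says exactly that $Z(F_v)\neq\emptyset$ for all $v$ arising from normal models of $X$. By Theorem~\ref{thm_patching}(1) the variety $Z$ carries a $0$-cycle of degree $p^c$; since $p\neq 2$ this degree is odd, so $Z$ has a closed point of odd residue degree, i.e.\ $q$ becomes isotropic over a finite extension $L/F$ of odd degree. Springer's theorem on quadratic forms then forces $q$ to be isotropic over $F$.

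For the second assertion, I would take $G=O^+(A,\sigma)$, a semisimple $F$-group of type $D_n$ with $2n\geq 4$, and let $Z$ be the variety of totally $\sigma$-isotropic right ideals of $A$ of reduced dimension $n$; then, for every extension field $L/F$, $\sigma_L$ is hyperbolic if and only if $Z(L)\neq\emptyset$. Over $\bar F$ the variety $Z$ breaks up into the two half-spin Grassmannians, interchanged by the outer automorphism, so it is not literally a twisted flag variety. I would first note that the hypothesis pins down $\mathrm{disc}(\sigma)$: the square class given by the ratio of $\mathrm{disc}(\sigma)$ and the discriminant of the hyperbolic involution on $A$ is trivial over every $F_v$ with $v$ divisorial, hence --- by cupping with $F^\times$ and invoking the Parimala--Suresh injectivity \eqref{eq_PS} with $n=2$, or by a local-global principle for $\mu_2$ --- trivial over $F$. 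Granting this, $G$ is an inner form of $\SO_{2n}$, each half-spin family is defined over $F$ and is a genuine twisted flag variety $Z^{+}$ of $G$, and $\sigma_L$ is hyperbolic if and only if $Z^{+}(L)\neq\emptyset$. Applying Theorem~\ref{thm_patching}(1) to $Z^{+}$ and arguing as in the first part produces a finite extension $L/F$ of odd degree over which $\sigma$ is hyperbolic; writing $A\simeq\mathrm{M}_r(D)$ with $D$ a division algebra carrying an involution of the first kind and $\sigma$ adjoint to a (skew-)hermitian form $h$ over $D$, the odd-degree descent theorem of Bayer-Fluckiger and Lenstra for hermitian forms shows that $h$, hence $\sigma$, is already hyperbolic over $F$. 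Alternatively, once the discriminant has been pinned down, hyperbolicity of $\sigma$ is the triviality of an $O^+(A,\sigma)$-torsor lying in $\Sha_{\Omega^1_{div}}(F,O^+(A,\sigma))$, which is trivial by Corollary~\ref{cor_D}.

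The one delicate step is the bookkeeping in the second assertion: one has to produce a variety that both detects hyperbolicity of $\sigma$ and is a twisted flag variety in the precise sense required by Theorem~\ref{thm_patching}, and this forces one to single out a single half-spin family and therefore to control $\mathrm{disc}(\sigma)$ in advance. The rest is routine: the quadric and the half-spin Grassmannian are standard projective homogeneous spaces, and the step from a $0$-cycle of odd degree to isotropy, respectively hyperbolicity, over $F$ is classical odd-degree descent (Springer, resp.\ Bayer-Fluckiger and Lenstra). It is exactly this descent that makes Theorem~\ref{thm_patching}(1) --- which yields a $0$-cycle of odd degree, not necessarily of degree $1$ --- enough, so that no quasi-splitness hypothesis on $G$, and no appeal to Theorem~\ref{thm_patching}(2), is needed.
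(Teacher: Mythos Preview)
Your argument for part~(1) is correct and coincides with the paper's.

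For part~(2) you take a different route from the paper, and your key reduction has a gap. To pass from the full variety of maximal $\sigma$-isotropic ideals to a single half-spin family $Z^{+}$ you need $\mathrm{disc}(\sigma)$ to be trivial over $F$, and you argue this by a local-global principle for $H^1(F,\mu_2)$. But the Parimala--Suresh injectivity~\eqref{eq_PS} only covers $H^{q+1}(F,\mu_n^{\otimes q})$ for $q\geq 1$, not $H^1(F,\mu_2)$; and your cupping argument shows only that every $a\in F^\times$ is a norm from $F(\sqrt d)$, which does not force $d\in F^{\times 2}$ in general. No result in the paper supplies the missing local-global principle for $\mu_2$, so this step is not justified as written. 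Your alternative via Corollary~\ref{cor_D} suffers from the same defect, since it still presupposes that the discriminant has been pinned down.

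The paper avoids the discriminant altogether by a case distinction on $A$. If $A$ is split, then $\sigma$ is adjoint to a quadratic form $q$ of rank $2n$, and local hyperbolicity of $q$ is reduced to global hyperbolicity by combining part~(1) with Witt decomposition. If $A$ is not split, the paper applies Theorem~\ref{thm_patching}(1) directly to the variety of maximal $\sigma$-isotropic ideals, obtains a $0$-cycle of odd degree, and concludes via the Bayer--Lenstra theorem, exactly as in your final step. You are right that this variety is not literally a single $\Par_I(G_0)$; the paper is tacitly invoking the natural extension of the proof of Theorem~\ref{thm_patching}(1) to this projective $G$-homogeneous variety rather than routing through a discriminant argument.
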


\begin{proof}
(1) In this case, we apply the result
to the projective quadric $Z$ which is projective
homogeneous under $\SO(q)$.
Theorem \ref{thm_patching}.(1) shows that $Z$
admits a $0$-cycle of degree $p^c$ which is odd.
It follows that $q$ become isotropic on 
some finite field extension $F'/F$ of odd degree.
According to Springer's theorem $q$ is 
isotropic. 

\sm

\noindent (2)
If $A$ is split, $\sigma$ is adjoint to 
a  regular quadratic $F$-form $q$ of dimension
$2n$. Witt's decomposition and (1) yield
that local hyperbolicity implies hyperbolicity 
for the quadratic form $q$.

If $A\cong \mathrm{M}_r(D)$ is not split, we consider the $F$--variety of isotropic ideals of $(A,\sigma)$ of  dimension $2n$. This is a projective $G$--variety
for the $F$-group $G=\mathrm{SO}(A, \sigma)$.
Theorem \ref{thm_patching}.(1) shows that $Z$
admits a $0$-cycle of degree $p^c$ which is odd.
It follows that $\sigma$ become hyperbolic on 
some finite field extension $F'/F$ of odd degree.
Bayer-Lenstra's theorem (see \cite[Cor.\ 6.15]{KMRT})
enables us to conclude that 
$\sigma$ is hyperbolic.
\end{proof}

In general flag varieties may have a 
$0$-cycle of degree one but no rational point \cite{P}. However there are a few
cases where we know that the existence of a
$0$-cycle of degree is equivalent to 
the existence of a rational point.
The paper \cite{GS} presents the case of 
some flag variety in type $E_7$ (resp.\ $E_6$)
and we have then a local-global principle
as in Corollary \ref{cor_hyp} when $p \not =2$ (resp.\
$p \not =3$).

\sm
Finally it would be interesting to  investigate  local-global principles 
for 0-cycles on projective $G$--homogenous varieties in the spirit of the paper  \cite{CTOHHKPS1}
which deals mostly with the patching viewpoint.
For us, after localization at a prime $l$, 
we are interested in local-global principles 
for 0-cycles for closed points (resp.\ closed separable points)
 of degree prime to $l$ with respect with completions for divisorial valuations.
 
 \bigskip

\bigskip

\medskip

\end{document}